\newtheorem{theorem}{Theorem}[section]
\newtheorem{proposition}[theorem]{Proposition}
\newtheorem{lemma}[theorem]{Lemma}
\newtheorem{corollary}[theorem]{Corollary}
\newtheorem{conjecture}[theorem]{Conjecture}
\theoremstyle{definition}
\newtheorem{definition}[theorem]{Definition}
\newtheorem{remark}[theorem]{Remark}
\newtheorem{example}[theorem]{Example}
\title{On the structure of locally conformally flat orbifolds and ALE metrics}
\author{Xiaokang Wang}
\address{Department of Mathematics, University of California, Irvine, CA 92697, USA}
\email{xiaokaw@uci.edu}
\begin{document}

\begin{abstract}
In this paper, we prove several structure theorems for locally conformally flat, positive Yamabe orbifolds and nonnegative scalar curvature, ALE manifolds. These two kinds of spaces can be related by conformal blow-up and conformal compactification. For the orbifolds, we prove that such orbifolds admit a manifold cover. For the ALE manifolds, the homomorphism of the fundamental group for the ALE space induced by the embedding of the ALE end is always injective.  Using these properties, several classifications of such ALE manifolds and orbifolds are given in low dimensions. As an application to the moduli space, we prove that the football orbifold $\mathbb{S}^4/\Gamma$ cannot be realized as the Gromov-Hausdorff limit. In addition, we prove the positive mass theorem of these ALE ends and give a simple proof for the optimal decay rate. Using the positive mass theorem, we can solve the orbifold Yamabe problem in the locally conformally flat case. 
    
\end{abstract}
\maketitle
\section{Introduction}
The locally conformally flat structure is a natural generalization of the conformal structure on two-dimensional surfaces. It is well known to modern geometric analysts because of its connection to the vanishing of the Weyl curvature. Much work has been done to study such manifolds, for example, \cite{SYconformal}. We will write LCF for short. In this paper, we always assume $n\geq 3$. We are interested in the moduli space of closed LCF manifolds with bounded curvature $L^{\frac{n}{2}}$ norm. To gain better control over the moduli space as it approaches the boundary, we assume that the manifolds in the moduli space have the Yamabe metric with a strictly positive Yamabe constant. In this way, we avoid local collapsing and ensure a non-collapsing orbifold limit. Such moduli spaces are considered in \cite{Aku}, \cite{TV1}, \cite{TV2}, \cite{TV3}. Similar moduli spaces are studied in the Einstein case; see, for example, \cite{And89}, \cite{BKN}.

Let $\mathfrak{M}(n, \mu_0, C_0)$ denote the set of closed n-dimensional Riemannian manifolds that satisfy the following:
\begin{itemize}
    \item locally conformally flat
    \item unit-scalar curvature:  $S(g)=1$
    \item $g$ is the Yamabe metric with the Yamabe constant $Y(M,[g])\geq \mu_0 >0$
    \item $\int_M |\operatorname{Ric(g)|^{\frac{n}{2}}}dvol_g < C_0$
\end{itemize}

Let $\mathfrak{M}'(n, \mu_0, C_0)$ be the moduli space if we in addition assume that the manifolds are orientable.

Note that the scalar curvatures are constant 1. Thus, the volume $\operatorname{vol}(M,g)$ is uniformly bounded from above by the Yamabe constant of $\mathbb{S}^n$ and from below by assumption. On the other hand, from the local ball non-collapsing, we have the uniform upper bound on the diameter; see \cite{Aku} for details.

\begin{definition}
A topological space $M^n$ is called an \textbf{orbifold} if for every point $x\in M$, there is a neighborhood $U\ni x$ where there exist $\tilde{U} \subseteq \mathbb{R}^n$ and a finite group $G$ acting on $\tilde{U}$ such that there is a $G$-invariant map $\pi: \tilde{U}\to U$ that induces a diffeomorphism $U\cong\tilde{U}/G$.

An orbifold $(M,g)$ is called a \textbf{Riemannian orbifold} if there is a $G$-invariant $C^\infty$ metric $\tilde{g}$ in $\tilde{U}$ such that away from singularities, $\pi^* g = \tilde{g}$.
\end{definition}

In this paper, we consider only the case where the singularities are isolated. This means that the action of $G$ contains only one fixed point in $\tilde{U}$. We say that such an orbifold is \textbf{oriented} if $M \setminus \{x_1, \dots, x_k\}$ is an oriented manifold. In this way, the orientation on $M \setminus \{x_1, \dots, x_k\}$ induces the orientation on $\mathbb{S}^{n-1}/\Gamma$, and thus the action of every orbifold group $\Gamma \subset {\rm{SO}}(n)$ is orientation-preserving with respect to this orientation. Thus, there is a well-defined choice of orientation on $M$.

For the moduli space $\mathfrak{M}(n, \mu_0, C_0)$, as shown in \cite{TV2}, it can be compactified by adding compact LCF multifolds of positive orbifold Yamabe invariant with finitely many singularities. A \textbf{Riemannian multifold} is obtained by identifying finitely many points from
\begin{align*}
    \tilde{M}=\coprod_{i=1}^N M_i,
\end{align*}
where $M_i$ is a Riemannian orbifold with positive orbifold Yamabe invariant. Note that orbifold points with different tangent cones can be identified together. We call a multifold singularity \textbf{irreducible} if it is the orbifold point in the usual sense, i.e., no other orbifold points are identified with this orbifold point.

Such non-trivial multifold points correspond to the points where curvature blows up in a sequence approaching the boundary of the moduli space. To understand the curvature blow-up behavior, a rescaling argument is needed. An important result is shown in \cite{TV3}, which demonstrates that such a limit always exists with Euclidean volume growth and, in particular, is an LCF, scalar-flat, asymptotically locally Euclidean (ALE) manifold:

\begin{definition}\label{def ALE} Let $(M,g)$ be a complete, non-compact Riemannian manifold. An \textbf{end} of $(M,g)$ is an unbounded component of the complement of some compact subset $K\subseteq M$. 

An end $E\subseteq M- K$ is called 
    \textbf{asymptotically locally Euclidean (ALE)} of order $\tau >0$ if there is a diffeomorphism:
\begin{align*}
     \phi: E\to (\mathbb{R}^n\setminus B(0,r))/\Gamma
\end{align*}
for some finite subgroup $\Gamma$ acting freely on $\mathbb{R}^n-B(0,r)$. Under such identification:
\begin{align*}
     &g_{ij}=\delta_{ij}+O(r^{-\tau})\\
     &\partial^{m}g_{ij}=O(r^{-\tau-|m|})
\end{align*}
for all partial derivatives of order $|m|$ with multi-index $m$ as $r\to\infty$. We call an end \textbf{asymptotically Euclidean (AE)} if the group $\Gamma$ is trivial.

We call $(M,g)$ an \textbf{ALE manifold} if it contains finitely many ALE ends $\{E_i\}$ of order $\{k_i\}$, respectively. We call $(M,g)$ an \textbf{AE manifold} if all the groups in the ends are trivial. If $(M,g)$ is orientable, then the orientation will induce an orientation on $\mathbb{S}^{n-1}/\Gamma$, and $\Gamma\subset{\rm{SO}}(n)$ with respect to this orientation.

\end{definition}

In this paper, our aim is to study such irreducible orbifolds and ALE manifolds. 
Let $(M,g)$ be a complete, non-compact, LCF, nonnegative scalar curvature, ALE manifold. It is well known that $(M,g)$ can be conformally compactified to be an LCF orbifold $M'$ with positive orbifold Yamabe invariant; see \cite{TV2}. Conversely, given an orbifold $(M',g')$ with at most isolated singularities and positive orbifold Yamabe invariant, we can associate such an orbifold with an ALE manifold by conformal blow-up using the conformal Green's function at these singularities. We will review these facts and give short proofs in Section~\ref{good orbifolds proof}.
Thus, all results in this paper can be equivalently stated for orbifolds or for ALE metrics.

\subsection{Orbifold results}

Any orbifold $M$ has an orbifold universal cover $\tilde{M}_{orb}$ in the sense that any orbifold is covered by the universal orbifold cover as a topological space, and $\pi_1^{orb}$ is defined as the deck transformation group of $\pi: \tilde{M} \rightarrow M$. The precise definition of $\pi_1^{orb}(M)$ can be found in \cite{orbifold}. In general, $\pi_1^{orb}(M)$ is not isomorphic to the topological fundamental group $\pi_1(M)$.

An important example is as follows:
\begin{definition}\label{Def football}
    A \textbf{football orbifold} is  $\mathbb{S}^n/\Gamma$, where $\Gamma \subseteq {\rm{O}}(n) \subset {\rm{O}}(n+1)$, having two fixed antipodal points. The orbifold metric is given by the quotient of the standard spherical metric. 
\end{definition}
In this case, $\pi_1^{orb}(M) \simeq \Gamma$, while $\pi_1(M) = \{e\}$ since it is simply connected.

We call an orbifold \textbf{good} if it is obtained from a manifold quotient by some group with fixed points. We say such an orbifold has a manifold cover, although it is not a genuine covering space. Football orbifolds are good orbifolds. Not all orbifolds are good; a famous example is the ``Teardrop" orbifold, which is constructed by identifying the boundary of two-dimensional disc $B_1\subseteq\mathbb{R}^2$ with the boundary of $B_1/\mathbb{Z}_k$. A Teardrop orbifold cannot be covered by a manifold. 
The ``bad'' Teardrop example is $2$-dimensional, LCF, and admits a metric of positive curvature. But in higher dimensions we have the following.
\begin{theorem}\label{good orbifold}  If $(M,g)$ is a compact LCF orbifold with positive scalar curvature and $\dim(M) \geq 3$, then $(M,g)$ is a good orbifold.
\end{theorem}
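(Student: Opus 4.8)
The plan is to exhibit $M$ as the quotient $\Omega/\Gamma$ of an open domain $\Omega\subseteq\mathbb{S}^n$ (a genuine manifold) by a discrete group $\Gamma$ of conformal transformations, with the orbifold points of $M$ occurring precisely at the fixed points of finite-order elements of $\Gamma$. Since the paper's notion of a good orbifold is exactly ``a manifold quotient by a group with fixed points,'' such a presentation finishes the proof. The mechanism that makes this possible in dimension $\ge 3$, and that fails for the $2$-dimensional teardrop, is that positive scalar curvature forces the developing image to omit only a very thin set.

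First I would record the flat conformal structure. Being LCF, $(M,g)$ carries a $(\mathrm{Conf}(\mathbb{S}^n),\mathbb{S}^n)$-orbifold structure, which produces an orbifold developing map $D\colon \tilde M_{orb}\to\mathbb{S}^n$ and a holonomy homomorphism $\rho\colon\pi_1^{orb}(M)\to\mathrm{Conf}(\mathbb{S}^n)$ intertwined by the equivariance $D\circ\gamma=\rho(\gamma)\circ D$; see \cite{orbifold}. Along a local uniformizing chart around a singular point, the finite local group $G_i$ is carried by $\rho$ into the stabilizer in $\mathrm{Conf}(\mathbb{S}^n)$ of the developed image point, so each cone point of $M$ is recorded as a fixed point of a finite-order holonomy element.

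The analytic core is to upgrade $D$ to an embedding using $S(g)>0$, following Schoen--Yau \cite{SYconformal}. I would show that $\Gamma:=\rho(\pi_1^{orb}(M))$ is a discrete (Kleinian) subgroup of $\mathrm{Conf}(\mathbb{S}^n)$, that $D$ identifies $\tilde M_{orb}$ with the domain of discontinuity $\Omega=D(\tilde M_{orb})$ on which $\Gamma$ acts properly discontinuously, so that $M\cong\Omega/\Gamma$, and that the limit set $\Lambda=\mathbb{S}^n\setminus\Omega$ satisfies $\dim_H\Lambda\le\tfrac{n-2}{2}$. Positive scalar curvature enters exactly as in Schoen--Yau: the conformal Laplacian is invertible with a positive Green's function, and the complete conformal metric built from it on the developed image both forces injectivity of the developing map and pins down the Hausdorff dimension of the omitted set $\Lambda$.

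Since $\Omega\subseteq\mathbb{S}^n$ is an open subset of a manifold, the presentation $M\cong\Omega/\Gamma$ already shows $M$ is good. For a clean structural statement I would add the topological remark that uses $n\ge 3$: as $\dim_H\Lambda\le\frac{n-2}{2}<n-2$, the closed set $\Lambda$ has codimension strictly greater than $2$, so its complement $\Omega$ is simply connected; hence $\Omega$ is in fact the orbifold universal cover and $\rho$ is an isomorphism onto $\Gamma$. This is also where the dimension hypothesis is sharp: for $n=2$ one has $\frac{n-2}{2}=0$ and a $0$-dimensional limit set can already disconnect $\pi_1$, which is consistent with the teardrop being a bad $2$-orbifold of positive curvature. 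I expect the main obstacle to be the extension of the Schoen--Yau injectivity and dimension estimate to the orbifold setting: one must control the developing map and the Green's-function metric across the isolated cone points, verifying that the local models $\mathbb{R}^n/G_i$ develop compatibly and that the positivity argument still yields both injectivity and the bound $\dim_H\Lambda\le\frac{n-2}{2}$ in the presence of singularities.
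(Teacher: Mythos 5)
There is a genuine gap, and it sits in your very first step. The claim that the LCF structure ``produces an orbifold developing map $D:\tilde M_{orb}\to\mathbb{S}^n$'' is not a routine preliminary: for a general orbifold, $\tilde M_{orb}$ is a priori only an orbifold, and it is a manifold carrying a developing map precisely when $M$ is good --- which is the statement being proved. (The teardrop is its own orbifold universal cover and admits no developing map.) What makes your step true here is Thurston's developability theorem: a $(G,X)$-orbifold, with $G$ acting rigidly on $X$, is good; rigidity of $\mathrm{Conf}(\mathbb{S}^n)$ is exactly Liouville's theorem (Theorem \ref{thm L}) and is exactly where $n\geq 3$ enters --- in dimension $2$ conformal maps are not rigid, so no $(G,X)$-structure exists, which is the correct explanation for the teardrop, not your limit-set heuristic. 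You never state or prove this developability theorem, and your closing paragraph shows the logical weight is misplaced: you flag ``extending Schoen--Yau across the cone points'' as the main obstacle, but once $\tilde M_{orb}$ is known to be a manifold the singular points have no preimages there, $\pi^*g$ is smooth and complete, and Theorem \ref{thm SY} applies verbatim --- no extension is needed. Conversely, if you do invoke Thurston's theorem explicitly, goodness is already established at that point with no use of positive scalar curvature, and everything you do afterwards (injectivity of the developing map, the Kleinian presentation $M\cong\Omega/\Gamma$, the bound $\dim_{\mathcal{H}}(\Lambda)\leq\frac{n-2}{2}$ --- which moreover requires the compactness and Yamabe-positivity inputs of Proposition 4.7 of \cite{SYconformal} applied to a compact manifold cover) is refinement, not proof of goodness. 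So as written the argument is either circular or vacuously front-loaded onto an uncited theorem.

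The paper's route avoids this entirely and uses positive scalar curvature in an essential way. Since the singularities are isolated, $M\setminus\{x_i\}$ is an honest manifold with $\pi_1(M\setminus\{x_i\})\cong\pi_1^{orb}(M)$; positivity of the scalar curvature supplies positive Green's functions of the conformal Laplacian at the $x_i$, and the conformal blow-up $G^{\frac{4}{n-2}}g$ turns $M\setminus\{x_i\}$ into a nonnegative scalar curvature, LCF, multi-ALE manifold. Schoen--Yau is then applied to the universal cover of this \emph{manifold}, and the totally-umbilic-sphere analysis of the ends (Theorem \ref{ALE structure}, Corollary \ref{cor 1}, in the spirit of Theorem \ref{group injective}) shows each isotropy group $\Gamma_{x_i}$ injects into $\pi_1(M\setminus\{x_i\})$. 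Goodness then follows from the standard criterion that an orbifold is good if and only if each local group injects into $\pi_1^{orb}(M)$. To repair your proposal, either cite Thurston's developability theorem for rigid $(G,X)$-orbifolds explicitly and openly concede that positive scalar curvature plays no role in goodness itself, or prove the injectivity of the local isotropy groups directly, as the paper does, without presupposing a developing map.
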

Theorem \ref{good orbifold} is proved in Section \ref{good orbifolds proof}.

We next recall some background in conformal geometry. For an LCF manifold with non-negative scalar curvature, $(M,g)$, Schoen-Yau \cite{SYconformal} proved there is a developing map, $\phi:(\tilde{M},\tilde{g})\to (\mathbb{S}^n, g_{\mathbb{S}^n})$, which is a conformal embedding, where $\pi:\tilde{M}\to M$ is the universal covering of $M$, $\tilde{g} = \pi^*g$, and $g_{\mathbb{S}^n}$ is the round metric. This is generalized to good orbifolds in the following manner. 
For an LCF good Riemannian orbifold with positive scalar curvature and finitely many singular points, the orbifold universal covering $\tilde{M}_{orb}$ is a manifold by the universal property. Furthermore, $\pi^* g$ is a smooth metric, so the developing map is defined.
We write $M = \Omega/G$, where $\Omega\subseteq \mathbb{S}^n$, and $G\subseteq C(n) \leq \rm{O}(n+1,1)$, the conformal group, which acts properly and discontinuously on $\Omega$. Denote $\Lambda := \mathbb{S}^n\setminus\Omega$. When $M$ is compact, $\Lambda$ coincides with the limiting set of $G$. By Selberg's lemma, there is a finite index torsion-free subgroup $H\subseteq G$. Thus, $M$ has a finite manifold cover. Note the Hausdorff dimension $dim_\mathcal{H}(\Lambda_G) = dim_\mathcal{H}(\Lambda_H)$.

Now we define the conformal connected sum. Let $(M_1,g_1)$, $(M_2,g_2)$ be two LCF orbifolds. Fix an orientation on $\mathbb{S}^n$. Let $A\subseteq \mathbb{S}^n$ be diffeomorphic to $[0,1]\times\mathbb{S}^{n-1}$, where $\mathbb{S}^n\setminus A$ has two components, $D_1$, $D_2$, and each $D_i$ is diffeomorphic to $B_{\mathbb{R}^n}(1)$.

\begin{definition}[\cite{Iz1}]\label{conformal connected sum}
    A \textbf{conformal connected sum} of $(M_1,g_1)$, $(M_2,g_2)$ is defined by:
    \begin{align*}
        (M_1 \setminus \psi_1(E_1), g_1)\bigcup_{\psi_1\circ\psi_1^{-1}|_{\psi_1(A)}}(M_2 \setminus \psi_2(E_2), g_2)
    \end{align*}
    where $E_i = A\cup D_i$ and $\psi_i: E_i \to M_i$ is the orientation-preserving conformal map. We denote it as $(M_1,g_1)\#_C(M_2,g_2)$.
\end{definition}

We can use the above definition to get a strong restriction on certain compact LCF orbifolds:
\begin{theorem}\label{classification} Let $(M,g)$ be a compact LCF orbifold with $\dim_{\mathcal{H}}(\Lambda) < 1$, then $(M,g)$ is finitely covered by $\mathbb{S}^n$ or a conformal connected sum
\begin{align*}
    \#_k \mathbb{S}^1\times \mathbb{S}^{n-1},
\end{align*}
for some $k>0$. 

If $n = 3,4$, then $(M,g)$ is a conformal connected sum 
\begin{align*}
M = \#_i  (\mathbb{S}^{n-1} \times \mathbb{R}/G_i) \#_j (\mathbb{S}^n/\Gamma_j),
\end{align*}
where each $G_i$ is a discrete cocompact subgroup of the isometry group of the standard product metric of $\mathbb{S}^{n-1}\times \mathbb{R}$ with at most isolated singularities; each $\Gamma_i$ is a finite subgroup of ${\rm{O}}(n+1)$ with at most isolated singularities. 

In any dimension: If $(M,g)$ is orientable, then if $n$ is odd, $(M,g)$ is a manifold; if $n$ is even, the number of non-trivial orbifold points is even, and they occur in orientation-reversing conjugate pairs. 
\end{theorem}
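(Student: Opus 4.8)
The plan is to argue entirely inside the Kleinian-group picture furnished by the developing map. By the description recalled above, the hypotheses place us in the setting $M=\Omega/G$ with $\Omega\subseteq\mathbb S^n$ open, $G\le \mathrm O(n+1,1)$ acting properly discontinuously, and $\Lambda=\mathbb S^n\setminus\Omega$ the limit set; compactness of $M$ forces $G$ to be convex cocompact (cocompact on $\Omega$, no parabolics), and Theorem~\ref{good orbifold} together with Selberg's lemma supplies a torsion-free finite-index $H\le G$ with $\dim_{\mathcal H}\Lambda_H=\dim_{\mathcal H}\Lambda_G<1$. The first step is the point-set remark that a connected set with more than one point has Hausdorff dimension at least $1$, so $\dim_{\mathcal H}\Lambda<1$ makes $\Lambda$ totally disconnected. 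The structural heart of the covering statement is then: a convex cocompact $G$ with totally disconnected limit set is a free product $G=\Gamma_1\ast\cdots\ast\Gamma_m\ast E_1\ast\cdots\ast E_\ell$ of finite groups $\Gamma_j$ and two-ended groups $E_i$. I would obtain this from Dunwoody accessibility (splitting the finitely presented $G$ over finite subgroups) combined with two limit-set inputs: the absence of parabolics rules out one-point-limit factors, and a freely indecomposable one-ended factor would have connected limit set, which cannot embed in the totally disconnected $\Lambda$. Applying the Kurosh subgroup theorem to the torsion-free $H$ then trivializes the $\Gamma_j$ and reduces the $E_i$ to copies of $\mathbb Z$, so $H$ is free; rank $0$ yields the cover $\mathbb S^n$, while rank $k$ is realized by a Schottky ball-pairing (here I use that $\Lambda$ is totally disconnected to separate its clumps by disjoint round spheres), whose quotient $\Omega/H$ is the conformal connected sum $\#_k(\mathbb S^1\times\mathbb S^{n-1})$.

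For $n=3,4$ I would upgrade the abstract free product of $G$ itself to an honest conformal connected sum in the sense of Definition~\ref{conformal connected sum}. The same round spheres separating the limit-set clumps of the distinct free factors can be cut and descended to $M$, realizing the decomposition geometrically as a reverse Klein--Maskit combination. The piece carried by a finite factor is a spherical football $\mathbb S^n/\Gamma_j$ with $\Gamma_j\le\mathrm O(n+1)$; the piece carried by a two-ended factor is $(\mathbb S^n\setminus\{2\text{ points}\})/E_i$, and since the twice-punctured sphere is conformally the cylinder $\mathbb S^{n-1}\times\mathbb R$ and $E_i$ lands cocompactly in its isometry group, this piece is $(\mathbb S^{n-1}\times\mathbb R)/G_i$. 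I expect the genuine obstacle to live here: the low dimension is what lets me certify that every piece has only isolated singularities (the fixed-point sets of the finite factors are $0$-dimensional) and that the cocompact $G_i$ and finite $\Gamma_j$ are the standard actions; in higher dimensions these fixed-point sets need not be isolated and the elementary factors are harder to pin down.

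For the orientability statement, which holds in every dimension, I would argue locally on the singular set, all of which lies in $\Omega$. Orientability gives $G\le\mathrm{Conf}^+(\mathbb S^n)$, and isolated singularities force each local group $\Gamma_x\subset\mathrm{SO}(n)$ to act freely on the link $\mathbb S^{n-1}$. When $n$ is odd the link is an even sphere, whose only nontrivial free finite action is the antipodal $\mathbb Z/2$; but the antipodal map is orientation-reversing and hence not in $\mathrm{SO}(n)$ for $n$ odd, so every $\Gamma_x$ is trivial and $M$ is a manifold. When $n$ is even, a finite $\Gamma_x\le\mathrm{Isom}^+(\mathbb H^{n+1})$ fixes a point of $\mathbb H^{n+1}$, hence exactly two points $p,q\in\mathbb S^n$ — the fixed subsphere cannot be positive-dimensional, as a positive-dimensional subsphere minus finitely many points would lie in $\Lambda$ and contradict $\dim_{\mathcal H}\Lambda<1$. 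The change of chart from $p$ to $q$ is a sphere inversion, which is orientation-reversing in every dimension, so the linearized local groups satisfy $\Gamma_q=A\Gamma_pA^{-1}$ with $\det A<0$: the two cone points form an orientation-reversing conjugate pair, and the singular set being exhausted by such pairs forces an even count. The delicate point, which I would derive from the free-product/combination structure above rather than from any local computation, is that both fixed points $p$ and $q$ genuinely lie in $\Omega$ (a finite factor has empty limit set, and its fixed points sit interior to its Schottky region), so that each elliptic axis really contributes two distinct orbifold points.
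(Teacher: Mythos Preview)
Your Kleinian strategy matches the paper's in spirit, but where the paper outsources the structural work to Izeki (Theorem~6.2 of \cite{Ilimit} for the Schottky cover, Theorem~2.7 of \cite{Idecomposition} for the conformal decomposition) and to the Ricci-flow classifications of Theorems~\ref{3d} and~\ref{4d} in low dimensions, you run everything directly via accessibility and Klein--Maskit combination. That is a legitimate alternative route, but two steps are not justified.

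The assertion that compactness of $\Omega/G$ alone forces convex cocompactness is unsupported, and under the bare hypothesis $\dim_{\mathcal H}\Lambda<1$ it is false: the flat torus $T^n=\mathbb R^n/\mathbb Z^n$ has $\Lambda=\{\infty\}$, compact quotient, and $\mathbb Z^n$ purely parabolic. Your accessibility argument then breaks, since a one-ended indecomposable factor can have a one-point limit set sitting happily inside a totally disconnected $\Lambda$. The paper obtains the no-parabolics conclusion from Izeki's theorem, implicitly carrying the positive-scalar-curvature context that excludes Bieberbach quotients; you need either to import that hypothesis explicitly or to rule out such parabolic factors directly.

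In the even-$n$ pairing argument you show that both fixed points $p,q$ of a local isotropy group lie in $\Omega$, but not that they lie in different $G$-orbits---which is what is needed for two distinct orbifold points in $M$. If some $\gamma_0\in G$ sends $p$ to $q$, your inversion remark does not by itself contradict orientability of the quotient. The paper handles exactly this in the Appendix (Theorem~\ref{number ends}, Case~1): one normalizes $\gamma_0$ to $v\mapsto r^2Uv/|v|^2$, deduces $\gamma_0^2=\mathrm{Id}$ from discreteness and the dimension bound on $\Lambda$, and then case-splits on the $\pm1$-eigenspace pattern of $U$ to force $\gamma_0$ to be orientation-reversing (or to produce a forbidden positive-dimensional fixed sphere meeting $\Omega$). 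Your free-product reduction could in principle localize this to each finite or two-ended factor, but the residual check that no element of a factor swaps its two poles is precisely the missing content.
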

Theorem \ref{classification} will be proved in Section \ref{classification proof}.

\begin{remark}
    Theorem \ref{classification} is an extension of Theorem 6.1 in Izeki \cite{Ilimit} to the orbifold case. Here we make use of the structure of the orbifolds proved in Theorem \ref{good orbifold}.
\end{remark}

\begin{remark}\label{n=3,4} If $n = 3$ or $4$, then non-negative scalar curvature implies $\dim_{\mathcal{H}}(\Lambda) < 1$. Thus, the above theorem directly applies to the LCF orbifold with non-negative scalar curvature.
\end{remark}

\begin{remark}
    In odd dimensions, by Theorem \ref{good orbifold}, the only non-trivial orbifold points are $\mathbb{Z}_2$-singularities, which are obtained by taking the quotient of an orientation-reversing $\mathbb{Z}_2$ map. In even dimensions, we will need the algebraic result about the orbits of fixed points proved in the Appendix.
\end{remark}






If $n = 4$,  Chen-Tang-Zhu \cite{ChenZhu} show that $(M,g)$ is diffeomorphic to a connected sum of football orbifolds. The key point is that, in this situation, we have a \textit{conformal} connected sum. As a consequence of Theorem \ref{classification}, we have the following corollary:
\begin{corollary}\label{isotropy}
    Any compact 4-orbifold $(M,g)$ with at most isolated
singularities and with positive isotropic curvature has a finite
manifold cover that is diffeomorphic to $\#_k (\mathbb{S}^1 \times \mathbb{S}^3)$.
\end{corollary}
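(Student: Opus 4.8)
The plan is to reduce the positive isotropic curvature (PIC) hypothesis to the locally conformally flat (LCF) setting of Theorem~\ref{classification}. This reduction is necessary because PIC by itself does not force the Weyl tensor to vanish, so one cannot feed the given metric into Theorem~\ref{classification} directly. The bridge is the diffeomorphism classification of Chen-Tang-Zhu \cite{ChenZhu}: a compact $4$-orbifold with isolated singularities and PIC is diffeomorphic to a connected sum of football orbifolds $\mathbb{S}^4/\Gamma_j$, possibly together with handles $\mathbb{S}^1 \times \mathbb{S}^3$. The key observation is that each model piece is LCF --- a football $\mathbb{S}^4/\Gamma$ is a quotient of the round sphere, and $\mathbb{S}^1 \times \mathbb{S}^3$ is conformal to the quotient of $\mathbb{S}^4 \setminus \{p,q\} \cong \mathbb{S}^3 \times \mathbb{R}$ --- and that their connected sum can be realized as a conformal connected sum in the sense of Definition~\ref{conformal connected sum}. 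I would therefore discard the original metric, retain only the diffeomorphism type of $M$, and re-equip $M$ with a new LCF metric $\bar{g}$ assembled from these round and product models.

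Next I would verify that $(M,\bar{g})$ satisfies the hypotheses of Theorem~\ref{classification}. It is a compact LCF orbifold with at most isolated singularities by construction. For the conformal type: each model piece has positive scalar curvature, hence positive orbifold Yamabe invariant, and the conformal connected sum of positive-Yamabe LCF pieces can be performed so as to remain positive-Yamabe (the LCF-preserving connected sum of \cite{Iz1}, \cite{Ilimit} carried out with standard necks). Thus $[\bar{g}]$ is of positive type and contains a representative of nonnegative scalar curvature. For the record, this is consistent with the fact that an LCF $4$-metric has PIC if and only if its scalar curvature is positive, as one sees by computing the isotropic curvature directly from the Schouten tensor. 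By Remark~\ref{n=3,4}, nonnegative scalar curvature in dimension $4$ forces $\dim_{\mathcal{H}}(\Lambda) < 1$, where $\Lambda$ is the limit set of the developing image of $(M,\bar{g})$.

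With $\dim_{\mathcal{H}}(\Lambda) < 1$ established, Theorem~\ref{classification} applies and shows that $(M,\bar{g})$ --- and hence $M$ as a smooth orbifold --- is finitely covered by $\mathbb{S}^4$ or by a conformal connected sum $\#_k(\mathbb{S}^1 \times \mathbb{S}^3)$. Reading $\mathbb{S}^4$ as the empty connected sum $\#_0(\mathbb{S}^1 \times \mathbb{S}^3)$, both cases are subsumed in the single family $\#_k(\mathbb{S}^1 \times \mathbb{S}^3)$ with $k \geq 0$. Since the existence and diffeomorphism type of a finite manifold cover is a topological invariant of $M$, it is unaffected by the replacement of $g$ by $\bar{g}$, and the stated conclusion follows.

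The step I expect to be the main obstacle is the passage from the Chen-Tang-Zhu differentiable decomposition to a genuine conformal connected sum of positive type. One must confirm that the gluing necks can be taken conformally standard, so that $\bar{g}$ is honestly LCF rather than merely diffeomorphic to an LCF orbifold, and that positivity of the Yamabe invariant survives each connected sum; the latter is precisely where the Kleinian description $M = \Omega/G$ and the bound $\dim_{\mathcal{H}}(\Lambda) < 1$ do the work. Once such an LCF model is in hand, the conclusion is a direct citation of Theorem~\ref{classification}, whose good-orbifold input (Theorem~\ref{good orbifold}) together with Selberg's lemma already produces the finite manifold cover.
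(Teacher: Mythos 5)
Your proposal is correct, but it takes a heavier route than the paper. The paper's proof is essentially two sentences: it cites Corollary 2 of \cite{ChenZhu} --- a compact four-orbifold with isolated singularities admits positive isotropic curvature \emph{if and only if} it admits an LCF metric of positive scalar curvature --- which immediately equips $M$ itself with an LCF positive scalar curvature metric; then Remark \ref{n=3,4} gives $\dim_{\mathcal{H}}(\Lambda)<1$, and Theorem \ref{classification} concludes. You instead invoke the Chen-Tang-Zhu \emph{diffeomorphism classification} (their main theorem, obtained via orbifold Ricci flow with surgery) and then rebuild an LCF positive-Yamabe metric by hand, gluing round footballs and cylindrical quotients with conformal connected sums. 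In effect you are re-deriving inline the nontrivial direction of the very equivalence the paper cites (the other direction being your pointwise observation that for LCF $4$-metrics PIC is equivalent to $S>0$, which is correct: with $W=0$ the isotropic curvature reduces to a multiple of the trace of the Schouten tensor). So your route is logically sound but redundant where the paper's is economical, and the step you rightly flag as the main obstacle --- conformally standard necks preserving the positive Yamabe type, as in \cite{Iz1}, \cite{Ilimit} --- is precisely the work already packaged in Corollary 2 of \cite{ChenZhu}. One inaccuracy to repair if you keep your route: the pieces in the Chen-Tang-Zhu decomposition are not only footballs and $\mathbb{S}^1\times\mathbb{S}^3$ handles but, as in Theorem \ref{4d}, quotients $\mathbb{S}^3\times\mathbb{S}^1/\Gamma'$ that can be non-orientable or carry isolated singularities (e.g.\ arising from caps); since all such pieces are still spherical or cylindrical quotients, they remain LCF with positive scalar curvature and your construction survives, but the model list should be stated accordingly. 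What your approach buys is transparency --- it makes explicit why the connected sum can be taken \emph{conformal}, which is the key point the paper only remarks on; what the paper's approach buys is brevity and avoidance of any gluing analysis.
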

\begin{remark}
    The corollary a priori seems to have nothing to do with LCF metrics. The connection is pointed out by \cite{ChenZhu}, namely: a compact four-orbifold with at most isolated singularities admits positive isotropic curvature if and only if it admits an LCF metric with positive scalar curvature (Corollary 2 in \cite{ChenZhu}). Since the conclusion is topological, we work on its LCF metric. 
\end{remark}



\subsection{ALE results}

Our first result for the ALE manifolds is the structure of the ALE ends. We denote: the k-th end $E_k$ is diffeomorphic to $\mathbb{R}^+\times\mathbb{S}^{n-1}/\Gamma_k$ for some finite subgroup $\Gamma_k\subset {\rm{O}}(n)$.

\begin{theorem}\label{group injective}
    Let $(M,g)$ be a complete, non-compact, LCF, non-negative scalar curvature ALE manifold. Then the inclusion map $i_k:E_k \to M$ induces an injective homomorphism ${i_k}_*:\Gamma_k\to\pi_1(M)$.
\end{theorem}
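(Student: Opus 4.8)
The plan is to reduce the statement to a purely topological fact about good orbifolds by passing to the conformal compactification, after which the isolatedness of the singularities together with $\dim M\geq 3$ does the real work. First I would record the local model of an end. By Definition~\ref{def ALE} the end $E_k$ is diffeomorphic to $(\mathbb{R}^n\setminus B(0,r))/\Gamma_k$, which deformation retracts onto $\mathbb{S}^{n-1}/\Gamma_k$; since $n\geq 3$ the sphere $\mathbb{S}^{n-1}$ is simply connected and $\Gamma_k$ acts freely, so $\pi_1(E_k)\cong\Gamma_k$ and the assertion to prove is exactly that $(i_k)_\ast\colon\Gamma_k\to\pi_1(M)$ has trivial kernel.

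Next I would conformally compactify. As recalled in the introduction (and proved in Section~\ref{good orbifolds proof}), $(M,g)$ is conformal to $M'\setminus\{p_1,\dots,p_m\}$, where $M'$ is a compact LCF orbifold of positive orbifold Yamabe invariant whose isolated singular point $p_k$ has local group $\Gamma_k$, and a punctured orbifold neighborhood $U_k$ of $p_k$ is identified with $E_k$. Choosing a positive scalar curvature representative in the conformal class (possible since the orbifold Yamabe invariant is positive) and applying Theorem~\ref{good orbifold}, $M'$ is a good orbifold. Hence its orbifold universal cover $q\colon\widetilde{M}'_{orb}\to M'$ is a smooth, simply connected manifold with $M'=\widetilde{M}'_{orb}/\pi_1^{orb}(M')$, and the local group $\Gamma_k$ is realized as the genuine isotropy subgroup $G_{\tilde p_k}\leq\pi_1^{orb}(M')$ of a lift $\tilde p_k$ of $p_k$.

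The heart of the argument is identifying the universal cover of $M$. Writing $\tilde\Sigma=q^{-1}(\{p_1,\dots,p_m\})$, the restriction $q\colon\widetilde{M}'_{orb}\setminus\tilde\Sigma\to M$ is a genuine covering. Because $\widetilde{M}'_{orb}$ is a simply connected manifold of dimension $\geq 3$ and $\tilde\Sigma$ is discrete, deleting $\tilde\Sigma$ does not change $\pi_1$; thus $\widetilde{M}'_{orb}\setminus\tilde\Sigma$ is simply connected and is therefore the universal cover $\hat M$ of $M$ (in particular $\pi_1(M)\cong\pi_1^{orb}(M')$). Now take the component $\hat U_k$ of the preimage of $U_k$ in $\hat M$ meeting a punctured coordinate ball about $\tilde p_k$. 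Shrinking $U_k$ to a uniformized chart $B/\Gamma_k$, its preimage is a disjoint union of punctured balls $gB\setminus\{g\tilde p_k\}$, so $\hat U_k\cong B^n\setminus\{0\}$, which is simply connected since $n\geq 3$.

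Finally I would invoke the covering-space dictionary: for the universal cover $\hat M\to M$ and a connected subset $U_k\hookrightarrow M$, the component $\hat U_k$ of the preimage is the cover of $U_k$ corresponding to $\ker\big((i_k)_\ast\colon\pi_1(U_k)\to\pi_1(M)\big)$. Since $\hat U_k$ is simply connected, this kernel is trivial, i.e. $\hat U_k$ is the universal cover of $U_k\simeq E_k$, which is precisely injectivity of $(i_k)_\ast$. The main obstacle is the bookkeeping in the second step: verifying that the conformal compactification produces an orbifold point whose local group is exactly $\Gamma_k$ with $U_k\simeq E_k$, and that goodness lets one identify $\Gamma_k$ with a true isotropy subgroup of $\pi_1^{orb}(M')$. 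Once these identifications are secured, the hypothesis $n\geq 3$ (which fails precisely for the teardrop) supplies the simple connectivity that finishes the proof.
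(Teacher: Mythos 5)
Your argument is circular in the context of this paper. The covering-space bookkeeping in your third and fourth steps is fine (the component of the preimage of $U_k$ in the universal cover of $M$ is indeed the cover corresponding to $\ker\,(i_k)_*$, and for a \emph{good} orbifold the local group at $p_k$ is realized as a genuine stabilizer in $\pi_1^{orb}(M')\cong \pi_1(M)$), but the load-bearing input you invoke --- Theorem \ref{good orbifold}, that the compactified orbifold $M'$ is good --- is proved in Section \ref{good orbifolds proof} by exactly the opposite implication: the paper blows up $M'$ at its singular points to get an ALE manifold, cites Theorem \ref{ALE structure} and Corollary \ref{cor 1} (which are consequences of the argument for Theorem \ref{group injective}) to conclude that each isotropy group injects into $\pi_1(M\setminus\{x_i\})$, and then uses the standard fact that an orbifold is good if and only if every isotropy group injects into $\pi_1^{orb}$ --- ``following the same spirit of Theorem \ref{group injective}.'' So goodness of $M'$ is logically downstream of the statement you are trying to prove, and quoting it as a black box establishes nothing. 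The paper's remark after that proof makes this explicit: the order of the two theorems could be reversed only if one first proved an orbifold version of the Schoen--Yau embedding theorem (Theorem \ref{thm SY}), which your proposal does not attempt.

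What is missing is the actual geometric mechanism, which your proposal never touches. The paper's proof (Section \ref{isotropy ALE section}) works directly on the universal cover $\tilde M$ of the ALE manifold: by Schoen--Yau there is a conformal developing map $\tilde M\to\mathbb{S}^n$, and the totally umbilic cross-sections $\mathbb{S}^{n-1}/\Gamma$ produced conformally from the flat-cone model at infinity must, by the rigidity of closed totally umbilic submanifolds of $\mathbb{R}^n$ (Theorem \ref{tus in Rn}), be round spheres; hence every end of $\tilde M$ is AE. Consequently the lift $\tilde S_r$ of a large distance sphere $S_r\cong\mathbb{S}^{n-1}/\Gamma_k$ is simply connected, so $\tilde S_r\to S_r$ is the universal covering, and a loop in the end that dies in $\pi_1(M)$ lifts closed to $\tilde S_r$ and is therefore already trivial in $\pi_1(S_r)\cong\Gamma_k$. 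Some such use of the LCF and nonnegative-scalar-curvature hypotheses is unavoidable: the statement is false without them (the Eguchi--Hanson metric has $\Gamma\cong\mathbb{Z}_2$ at infinity but is simply connected), whereas your proposed argument quotes the curvature hypotheses only through Theorem \ref{good orbifold} and so collapses once that citation is disallowed. To repair your outline you would need an independent proof that $M'$ is good --- for instance by first establishing an orbifold developing map --- at which point you would essentially be reproving the paper's chain of results.
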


Theorem \ref{group injective} will be proved in Section \ref{isotropy ALE section}. 

Note that this is not always true for ALE manifolds without the LCF assumption. For example, in the Eguchi-Hanson metric, $M_{EH}$, the group in the end is $\Gamma\cong \mathbb{Z}_2$, but $M_{EH}$ is simply connected. We prove this theorem by combining the embedding theorem of \cite{SYconformal} and the observation that the only closed totally umbilic submanifolds in $\mathbb{R}^n$ are $\mathbb{S}^{n-1}$. 

From Remark \ref{n=3,4}, in dimension 3 and 4, we know that LCF orbifold with positive Yamabe satisfies $\operatorname{dim}_\mathcal{H}(\Lambda)<1$. Thus, Theorem \ref{classification} an be applied directly to give an ALE manifold classification. In particular, we have the following corollary about the ends of oriented ALE manifolds.
\begin{corollary} Let $M$ be an orientable, LCF, ALE manifold with non-negative scalar curvature. If $n = 3$, then $M$ is AE. If $n=4$, the groups of the non-trivial ALE ends occur in orientation-reversing conjugate pairs, so there must be an even number of ends with non-trivial group. In particular, any orientable LCF ALE $4$-manifold with only one end is AE. 
\end{corollary}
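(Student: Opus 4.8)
The plan is to transfer the problem from the ALE manifold $M$ to its conformal compactification and then invoke Theorem \ref{classification}. First I would conformally compactify $(M,g)$ to a compact LCF orbifold $(M',g')$ with positive orbifold Yamabe invariant, using the construction recalled in Section \ref{good orbifolds proof}. Under this compactification each ALE end $E_k \cong \mathbb{R}^+ \times \mathbb{S}^{n-1}/\Gamma_k$ is capped off by a single point $p_k$ whose orbifold neighborhood is modeled on $B^n/\Gamma_k$, so the orbifold group at $p_k$ is exactly the end group $\Gamma_k$. Consequently a trivial end ($\Gamma_k = \{e\}$, i.e.\ an AE end) becomes a smooth point of $M'$, while a non-trivial end becomes a genuine isolated orbifold point; the non-trivial orbifold points of $M'$ thus correspond bijectively to the non-trivial ALE ends of $M$.

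Next I would check that orientability passes to the compactification. Since $M' \setminus \{p_1, \dots, p_k\}$ is diffeomorphic to $M$, which is oriented by hypothesis, the orbifold $M'$ is oriented in the sense of the definition given earlier, and each $\Gamma_k \subset \mathrm{SO}(n)$ with respect to the induced orientation on $\mathbb{S}^{n-1}/\Gamma_k$. By Remark \ref{n=3,4}, in dimensions $n = 3, 4$ the positive Yamabe (equivalently positive scalar curvature) hypothesis forces $\dim_{\mathcal{H}}(\Lambda) < 1$, so the hypotheses of Theorem \ref{classification} are met and its orientability conclusion applies to $M'$.

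Finally I would read off the three assertions from Theorem \ref{classification}. For $n = 3$ (odd and orientable), Theorem \ref{classification} gives that $M'$ is a manifold, hence has no non-trivial orbifold points; therefore every $\Gamma_k$ is trivial and $M$ is AE. For $n = 4$ (even and orientable), Theorem \ref{classification} states that the non-trivial orbifold points are even in number and occur in orientation-reversing conjugate pairs; translating through the bijection above, the non-trivial ALE ends occur in orientation-reversing conjugate pairs and there is an even number of them. The last claim is then immediate: a manifold with a single end yields at most one orbifold point after compactification, and a single non-trivial point would violate the even-count statement, so the unique end must be trivial and $M$ is AE.

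The main obstacle I anticipate is not in the logic of applying Theorem \ref{classification} but in verifying cleanly that the compactification identifies the end group $\Gamma_k$ with the orbifold group at $p_k$ \emph{together with the matching of orientations}: the ``orientation-reversing conjugate pairs'' statement for orbifold points is what must translate verbatim to the ends, and this requires that the conformal compactification be orientation-preserving on $M \setminus K$ and that the model $B^n/\Gamma_k$ inherit the same orientation convention used to place $\Gamma_k \subset \mathrm{SO}(n)$. Once this correspondence is pinned down, the rest follows formally from the cited theorem.
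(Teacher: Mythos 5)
Your proposal is correct and takes essentially the same route as the paper: conformally compactify via Theorem \ref{positive yamabe} so that non-trivial ends become isolated orbifold points with the same groups, invoke Remark \ref{n=3,4} to get $\dim_{\mathcal{H}}(\Lambda)<1$, and then read the $n=3$ and $n=4$ conclusions off the orientability clause of Theorem \ref{classification} (which in turn rests on Corollary \ref{orientable ALE end} and Theorem \ref{orbifold points num}). The orientation-matching point you flag at the end is handled definitionally by the paper's conventions, which fix the orientation on $\mathbb{S}^{n-1}/\Gamma$ induced from $M\setminus\{x_1,\dots,x_k\}$ so that $\Gamma\subset\mathrm{SO}(n)$ in both the orbifold and ALE settings.
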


As an application, We have a refined structure about the boundary of the oriented moduli space.

\begin{theorem}\label{t:even}
    Let $(M_i, g_i)$ be a sequence in  $\mathfrak{M}'(n, \mu_0, C_0)$.
    If $n=3$, then the Gromov-Hausdorff limit is finitely many LCF manifolds with finitely many points identified. 

If $n=4$, then the Gromov-Hausdorff limit is finitely many LCF orbifolds with finitely many points identified, such that the tangent cone at any singular point has an even number of cones with non-trivial orbifold groups, which appear in pairs. 
\end{theorem}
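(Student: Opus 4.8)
The plan is to combine the Tian--Viaclovsky compactness theory recalled in the introduction with the ALE end structure already established in the corollary following Theorem~\ref{group injective}. First I would invoke the compactness results of \cite{TV2}, \cite{TV3}: after passing to a subsequence, $(M_i,g_i)$ converges in the Gromov--Hausdorff sense to a compact LCF multifold $M_\infty$ of positive orbifold Yamabe invariant, i.e.\ finitely many LCF orbifolds $M_{\infty,1},\dots,M_{\infty,N}$ with finitely many points identified, and the convergence is smooth on the regular part, away from a finite singular set. This already supplies the coarse form of the statement; the real content is the description of the tangent cones at the singular points.

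The key mechanism is the bubbling analysis. At each singular point $p\in M_\infty$ the curvature concentration is resolved by rescaling: choosing basepoints $p_i\to p$ and scales $\lambda_i\to\infty$, the rescaled metrics $\lambda_i^2 g_i$ converge, on compact sets and modulo diffeomorphism, to a complete, non-compact, LCF, scalar-flat ALE manifold $N_p$ --- the \emph{bubble} of \cite{TV3}. I would record two properties of $N_p$. First, it is orientable: the $M_i$ lie in $\mathfrak{M}'(n,\mu_0,C_0)$ and are thus orientable, orientation is preserved under smooth convergence on the regular region and under the blow-up rescaling, so $N_p$ inherits a global orientation. Second, its ends correspond bijectively to the cones making up the tangent cone of $M_\infty$ at $p$: since the tangent cone of $M_\infty$ at $p$ agrees with the tangent cone at infinity of $N_p$, an end $E_j$ of $N_p$ with group $\Gamma_j\subset\mathrm{O}(n)$ is precisely one of the cones $\mathbb{R}^n/\Gamma_j$ glued at $p$, and $\Gamma_j$ is trivial exactly when that cone is smooth.

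With this dictionary in place, the two cases follow from the corollary after Theorem~\ref{group injective} for orientable, LCF, non-negative scalar curvature ALE manifolds (recall the bubble is scalar-flat, hence has non-negative scalar curvature). For $n=3$, every bubble $N_p$ is AE, so all its end groups are trivial; hence every cone in every tangent cone is $\mathbb{R}^3$, each singular point is merely an identification of smooth points, and every component $M_{\infty,i}$ is a genuine LCF manifold, giving the $n=3$ conclusion. For $n=4$, the corollary asserts that the non-trivial ends of $N_p$ occur in orientation-reversing conjugate pairs; transporting this through the end--cone dictionary yields exactly that the cones with non-trivial orbifold group at $p$ are even in number and appear in such pairs. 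In particular an isolated orbifold point (a single non-trivial cone) is impossible, since a one-ended orientable LCF ALE $4$-manifold is AE; note that this localization at each tangent cone is genuinely stronger than the component-level pairing of Theorem~\ref{classification}, which is why the bubble is needed rather than the orbifold classification alone.

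The step I expect to be the main obstacle is making the bubble construction and the end--cone dictionary precise, in particular (a) verifying that the orientation genuinely survives the passage to the blow-up limit, and (b) handling a possible iterated bubble tree at a single singular point --- where several successive rescalings occur --- so that the tangent cone is correctly read off and the pairing statement is applied at the right level of the tree. Granting the Tian--Viaclovsky bubbling package, these are bookkeeping matters, but they are where the care is required; the algebraic pairing itself is entirely supplied by the already-proven ALE corollary.
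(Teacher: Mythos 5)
Your overall route is the paper's route: Tian--Viaclovsky compactness, blow-up bubbles at singular points, the end--cone dictionary, and the pairing supplied by Corollary~\ref{orientable ALE end}. But there is a genuine gap at exactly the point you set aside as ``bookkeeping'': you assume the bubble $N_p$ obtained by rescaling at a singular point is an LCF scalar-flat ALE \emph{manifold}, to which the corollary applies directly. In the Tian--Viaclovsky bubble tree the first-level blow-up limit may instead be an ALE \emph{orbifold} with further multifold singular points (resolved only by deeper rescalings), and for ALE orbifolds the pairing statement is simply false: the conformal blow-up of the football $\mathbb{S}^4/\Gamma$ at one of its two singularities is an orientable, LCF, scalar-flat ALE orbifold with a single non-trivial end and one residual orbifold point, so its non-trivial ends do not pair. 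This counterexample (noted in the paper's proof) shows that ``applying the pairing statement at the right level of the tree'' cannot be done level-by-level as stated, and the obvious repair --- gluing the deeper bubbles back in to resolve $N_p$ into a smooth ALE manifold --- also fails, because the resolved metric need not have non-negative scalar curvature, so the Schoen--Yau developing map (Theorem~\ref{thm SY}), on which the whole end analysis rests, is unavailable.

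The missing idea is the mechanism the paper uses to propagate pairing up the tree: first, the gap lemma (Lemma~\ref{gap ALE}) gives finite depth, so the deepest bubbles are genuine ALE manifolds and Corollary~\ref{orientable ALE end} applies there; then one inducts upward, and at an intermediate orbifold bubble $(N_{k-1},g_{k-1})$ one invokes the conformal blow-up lemma (Lemma~\ref{ALE orbifold blow-up}) to blow up \emph{all} of its singularities by a superposition of conformal Green's functions, producing a scalar-flat ALE manifold $X_{k-1}$ to which the corollary does apply. The new ends of $X_{k-1}$ created at the blown-up singularities are already paired by the inductive hypothesis (the deeper-level analysis), so the pairing of all non-trivial ends of $X_{k-1}$ forces the \emph{original} ends of $N_{k-1}$ to pair as well. (There is also a mild subtlety at multifold points, where the conformal factor is defined by detaching the identified singularities.) Without this induction-from-the-deepest-bubble argument, your end--cone dictionary and per-bubble application of the corollary break down precisely in the iterated-bubbling case, which is the only case where the theorem has real content beyond the single smooth bubble; so the gap is substantive, not organizational. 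Your remaining points --- orientability passing to the blow-up limit, and the identification of the bubble's ends with the cones of the tangent cone via the $\epsilon$-regularity diffeomorphisms on annuli --- are handled in the paper essentially as you describe and are indeed routine.
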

Theorem \ref{t:even} will be proved in Section \ref{moduli space application proof}. This also relies on the result proved in the Appendix, giving certain conditions when orbifold isotropy groups must occur in pairs. For an illustration of the degeneration in Theorem~\ref{t:even}, see Figure \ref{fig:moduli}. In Figure \ref{fig:moduli}, the limit, $(M_\infty, g_\infty)$, will be a multifold with exactly one multifold singularity. The rescaling limit at the point where the curvature tends to infinity, $(\hat{M}_\infty, \hat{g}_\infty)$, will be a 2-end ALE manifold.

\begin{figure}[t]
    \centering
    \resizebox{1\textwidth}{!}{\begin{circuitikz}
\tikzstyle{every node}=[font=\Huge]
\draw [line width=2.0pt, short] (4.75,10) .. controls (5.5,5.75) and (14.25,6.25) .. (16.25,8.5);

\draw [line width=2.0pt, short] (4.75,10) .. controls (4,14.25) and (13.75,14.5) .. (16,13);
\draw [line width=2.0pt, short] (16,13) .. controls (14.25,13) and (14.5,8.5) .. (16.25,8.5);
\draw [line width=2.0pt, dashed] (16,13) .. controls (17.25,13) and (17.5,8.75) .. (16.25,8.5);
\draw [line width=2.0pt, short] (7.75,10.25) .. controls (10,9) and (11,9) .. (12.75,10.25);
\draw [line width=2.0pt, short] (8.25,10) .. controls (10.25,10.75) and (10.5,10.75) .. (12.25,10);
\draw [line width=2.0pt, short] (16,13) .. controls (15.75,22.75) and (27,21.25) .. (25.75,14);
\draw [line width=2.0pt, short] (16.25,8.5) .. controls (17.5,2.25) and (27.75,-0.25) .. (26,8);
\draw [line width=2.0pt, short] (18,11) .. controls (17,18) and (24,17.75) .. (24.25,14);
\draw [line width=2.0pt, short] (18,11) .. controls (18,7) and (23,4) .. (24.5,8.25);
\draw [line width=2.0pt, short] (24.5,8.25) .. controls (25.25,7.75) and (25.5,7.75) .. (26,8);
\draw [line width=2.0pt, short] (24.25,14) .. controls (25,14.5) and (25.25,14.5) .. (25.75,14);
\draw [line width=2.0pt, dashed] (24.5,8.25) .. controls (25.25,8.5) and (25.5,8.5) .. (26,8);
\draw [line width=2.0pt, dashed] (24.25,14) .. controls (25,13.5) and (25.25,13.5) .. (25.75,14);
\draw [line width=2.0pt, short] (24.25,14) -- (24.75,11.75);
\draw [line width=2.0pt, short] (25.75,14) -- (25.5,12);
\draw [line width=2.0pt, short] (24.5,8.25) -- (25,10.25);
\draw [line width=2.0pt, short] (26,8) -- (25.5,10.25);
\draw [line width=2.0pt, short] (24.75,11.75) .. controls (24.25,10.75) and (24.5,10.5) .. (25,10.25);
\draw [line width=2.0pt, short] (25.5,12) .. controls (26.5,11.25) and (26.5,10.25) .. (25.5,10.25);
\draw [line width=2.0pt, short] (25.25,11.5) .. controls (25.75,11) and (26,11) .. (25.25,10.5);
\draw [line width=2.0pt, short] (25.5,11.25) .. controls (25.25,11) and (25.25,10.75) .. (25.5,10.75);
\draw [->, >=Stealth] (28,12.25) -- (37.25,12.25);
\node [font=\LARGE] at (32,13) {Gromov-Hausdorff};
\draw [line width=2.0pt, short] (38.75,12.25) .. controls (38.75,8.5) and (50.25,9.25) .. (52,10);
\draw [line width=2.0pt, short] (38.75,12.25) .. controls (38,17.75) and (48,17.25) .. (51,15.75);
\draw [line width=2.0pt, short] (51,15.75) .. controls (49.75,14) and (50.75,10.75) .. (52,10);
\draw [line width=2.0pt, short] (42.5,12.75) .. controls (45,11.25) and (45.5,11.5) .. (47.5,12.75);
\draw [line width=2.0pt, short] (43.5,12.25) .. controls (45,12.75) and (45.25,12.75) .. (46.5,12.25);
\draw [line width=2.0pt, dashed] (51,15.75) .. controls (52.5,13.25) and (52.5,12) .. (52,10);
\draw [line width=2.0pt, short] (51,15.75) .. controls (52,23.5) and (67.5,25.5) .. (62.75,12.75);
\draw [line width=2.0pt, short] (52,10) .. controls (57.25,3.5) and (65.25,5) .. (62.75,12.75);
\draw [line width=2.0pt, short] (53.75,13) .. controls (53.75,18.75) and (61.75,19.5) .. (62.75,12.75);
\draw [line width=2.0pt, short] (53.75,13) .. controls (54.75,10.25) and (62.25,6.5) .. (62.75,12.75);
\draw [->, >=Stealth] (26,2) -- (36.5,-3.25)node[pos=0.5, fill=white]{blow up};
\draw [line width=2.0pt, short] (42,4.5) -- (44.75,-1.75);
\draw [line width=2.0pt, short] (47.75,-1.75) -- (50.75,4.75);
\draw [line width=2.0pt, short] (44.75,-1.75) .. controls (42.5,-3.5) and (44,-4.25) .. (44.75,-5);
\draw [line width=2.0pt, short] (44.75,-5) -- (41.5,-9.25);
\draw [line width=2.0pt, short] (47.75,-1.75) .. controls (49.75,-3.25) and (49.25,-4.25) .. (47.75,-4.75);
\draw [line width=2.0pt, short] (47.75,-4.75) -- (51.5,-9.75);
\draw [line width=2.0pt, short] (41.5,-9.25) .. controls (46.75,-11.75) and (47.25,-11.75) .. (51.5,-9.75);
\draw [line width=2.0pt, short] (42,4.5) .. controls (46.25,7) and (46.75,6.5) .. (50.75,4.75);
\draw [line width=2.0pt, dashed] (41.5,-9.25) .. controls (46.5,-8) and (46.75,-8.25) .. (51.5,-9.5);
\draw [line width=2.0pt, dashed] (42,4.5) .. controls (46.5,3.25) and (46.5,3.25) .. (50.75,4.75);
\draw [line width=2.0pt, short] (45,-3.5) .. controls (46.5,-4.5) and (46.75,-4.25) .. (47.75,-3.5);
\draw [line width=2.0pt, short] (45.75,-3.75) .. controls (46.5,-3) and (46.75,-3) .. (47.25,-3.75);
\node [font=\Huge] at (12.5,0.75) {};
\node [font=\Huge] at (13.5,1.5) {$(M_i, g_i)$};
\node [font=\Huge] at (56.5,4.25) {$(M_{\infty}, g_{\infty})$};
\node [font=\Huge] at (56.5,-11.25) {$(\hat{M}_{\infty}, \hat{M}_{\infty})$};
\end{circuitikz}}
    \caption{A sequence $\{(M_i,g_i)\}$, where $(M_i,g_i)\in \mathfrak{M}'(4,\mu_0, C_0)$. The limit $(M_\infty, g_\infty)$ is a multifold with one multifold singularity. The rescaled limit will be a 2-end ALE manifold $(\hat{M}_\infty, \hat{g}_\infty)$.}
    \label{fig:moduli}
\end{figure}
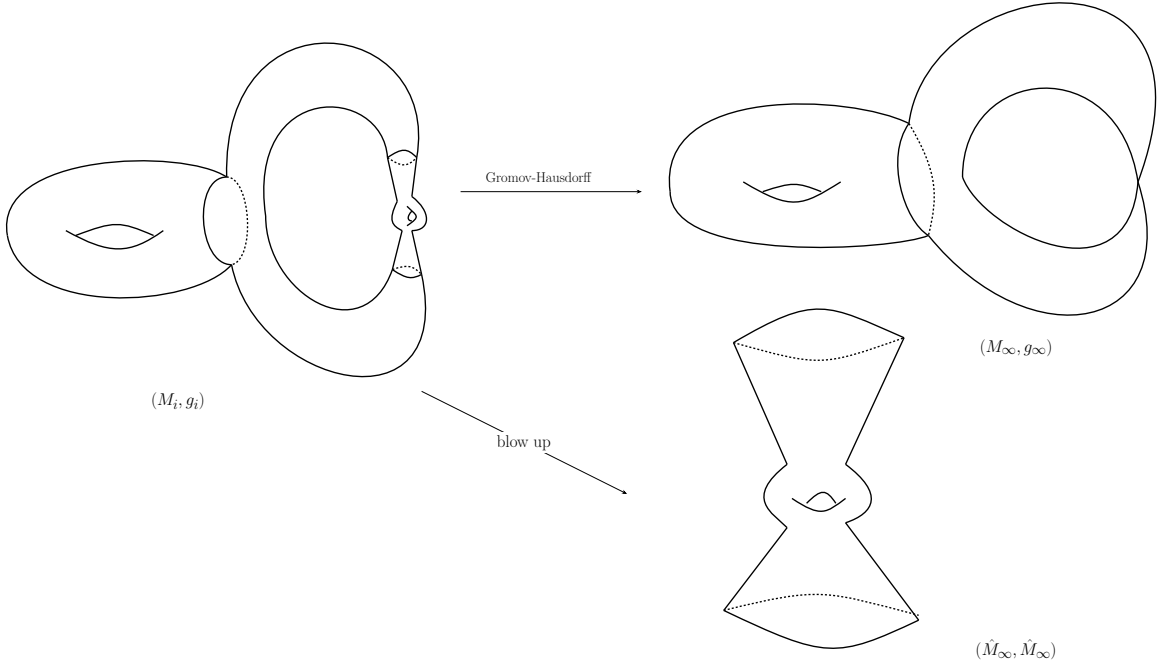

This has the following corollary that rules out certain multifolds being in the boundary of the oriented moduli space. 
\begin{corollary}\label{moduli space application}
    Let $n=3,4$, and $(M_o,g_o)$ be a closed, orientable, LCF multifold with finitely many isolated singularities and with only irreducible orbifold points. Then $(M_o,g_o)$ cannot be realized as a limit of a sequence of closed, orientable, LCF manifolds $(M_i^n, g_i)\in\mathfrak{M}'(n, \mu_0, C_0)$.
\end{corollary}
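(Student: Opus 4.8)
The plan is to derive the corollary directly from Theorem~\ref{t:even} by a structure-and-parity argument on the singular tangent cones, with no further analysis required. I would argue by contradiction: suppose some $(M_o,g_o)$ as in the statement \emph{is} realized as a Gromov--Hausdorff limit of a sequence $(M_i,g_i)\in\mathfrak{M}'(n,\mu_0,C_0)$. By hypothesis $(M_o,g_o)$ has at least one genuine singularity, and each singularity is an \emph{irreducible} orbifold point, i.e.\ a single non-trivial orbifold point $\mathbb{R}^n/\Gamma$ with $\Gamma\neq\{e\}$ to which no other sheet is identified; in particular its tangent cone is the single cone $\mathbb{R}^n/\Gamma$. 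The strategy is then to confront this with the two conclusions of Theorem~\ref{t:even}.

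First, for $n=3$: Theorem~\ref{t:even} tells us that any such limit is a union of finitely many \emph{LCF manifolds} with finitely many points identified. Hence the only multifold singularities that can occur arise from gluing together \emph{smooth} points, and the limit carries no genuine orbifold singularity at all. Since $(M_o,g_o)$ does carry a non-trivial irreducible orbifold point, it cannot be such a limit, and we are done in this dimension.

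Next, for $n=4$: Theorem~\ref{t:even} tells us that at every singular point of the limit the tangent cone is a finite union of cones among which those with non-trivial orbifold group occur \emph{in pairs}, hence in \emph{even} number. But an irreducible orbifold point contributes exactly one cone $\mathbb{R}^4/\Gamma$ with $\Gamma$ non-trivial, i.e.\ an \emph{odd} count of non-trivial cones. This parity mismatch contradicts Theorem~\ref{t:even} and finishes the proof.

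The one place that needs care is the bookkeeping of tangent cones at a multifold point: I would make explicit that the quantity ``number of constituent cones with non-trivial orbifold group'' appearing in Theorem~\ref{t:even} is precisely the invariant that equals $1$ for an irreducible orbifold point, so that the even/odd contradiction is genuine rather than an artifact of how the tangent cone is decomposed. Beyond this purely formal matching there is no real obstacle: all of the substance lives in Theorem~\ref{t:even} (and, through it, in the appendix result on orbits of fixed points and in the ALE end analysis of Theorem~\ref{group injective}), so once those are granted the corollary is immediate.
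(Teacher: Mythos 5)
Your proof is correct and takes essentially the same route as the paper, which deduces the corollary directly from Theorem~\ref{t:even} (in the form of Theorem~\ref{GH limit}): for $n=3$ the limit has no non-trivial cones at all, and for $n=4$ the non-trivial cones at each singular point come in pairs, while an irreducible orbifold point contributes exactly one non-trivial cone, giving the parity contradiction. Your explicit bookkeeping that ``irreducible'' means a single non-trivial cone in the tangent cone is precisely the matching implicit in the paper's one-line deduction.
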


In particular, we can show that the football metric $\mathbb{S}^4/\Gamma$ is not in the boundary of the \textit{oriented} moduli space. However, for certain groups $\Gamma$, it is within the boundary of the \textit{non-orientable} moduli space; see Section \ref{nonorientable construction} for the construction. 

In the Einstein case, a similar result is shown in \cite{biquard}, \cite{ozuch2022noncollapsed}, where $\mathbb{S}^4/\Gamma$ cannot be realized as a limit of non-collapsing Einstein manifolds with $L^2$ curvature bound. In the Einstein case, they identified certain local obstructions on the singularity. In our case, we only use the structure of the ALE manifolds from Theorem \ref{classification} when rescaling the metric near the singularity.

Note that, similar to Definition \ref{def ALE}, we can define an ALE orbifold with isolated singularities, where we allow the compact region $K$ to have isolated singularities.
We can prove a positive mass theorem for such ALE orbifolds.
\begin{theorem}\label{PMT}
    Let $(M,g)$ be an LCF, nonnegative scalar curvature, ALE orbifold with $\tau > \frac{n-2}{2}$, then the ADM mass is non-negative for any end of $(M,g)$. The mass is zero at an end if and only if $(M,g)\cong (\mathbb{R}^n,g_{\mathbb{R}^n})/\Gamma$, where $\Gamma\subseteq {\rm{O}}(n)$ is a finite subgroup that acts isometrically and fixes the origin.
    Furthermore, if $(M,g)$ is scalar-flat, then $(M,g)$ is ALE of order $n-2$, which is optimal.
\end{theorem}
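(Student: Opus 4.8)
The plan is to identify the ADM mass of an ALE end with the constant term in the expansion of a conformal Green's function on the associated compact orbifold, and then exhibit that constant term as a manifestly non-negative Poincaré series. First I would reduce to the scalar-flat case. By the conformal compactification reviewed in Section~\ref{good orbifolds proof}, the ALE end may be capped off to produce a compact LCF orbifold $(M',g')$ with positive orbifold Yamabe invariant; by Theorem~\ref{good orbifold} this orbifold is good, so its universal orbifold cover is a manifold and the developing map realizes $M' = \Omega/G$ with $\Omega \subseteq \mathbb{S}^n$ and $G \leq \mathrm{O}(n+1,1)$. The end corresponds to an isolated fixed point $x_0 \in \Omega$ whose stabilizer $\Gamma = \mathrm{Stab}_G(x_0)$ is the end group. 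Writing the scalar-flat metric on the end as $g = u^{4/(n-2)}\delta$ in developing coordinates, the Kelvin transform $\hat u(y) = |y|^{-(n-2)} u(y/|y|^2)$ extends to the conformal Green's function of the conformal Laplacian $L_{g'}$ with pole at $x_0$, and its expansion $\hat u(y) = |y|^{-(n-2)} + m + O(|y|)$ has constant term $m$ equal, up to a fixed positive dimensional constant, to the ADM mass of the end. The hypothesis $\tau > \frac{n-2}{2}$ guarantees this mass is well defined.

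The analytic core is to represent $\hat u$ by the Poincaré series obtained from the method of images,
\begin{equation*}
    \hat u(y) = \sum_{\gamma \in G} |J_\gamma(x_0)|^{\frac{n-2}{2n}}\, |y - \gamma x_0|^{-(n-2)},
\end{equation*}
where $J_\gamma$ is the conformal Jacobian. Following Schoen-Yau \cite{SYconformal}, the series converges because positive Yamabe forces $\dim_{\mathcal{H}}(\Lambda) \leq \frac{n-2}{2}$, which controls the critical exponent of $G$; moreover each summand is positive. The terms with $\gamma \in \Gamma$ produce the singular part $c\,|y|^{-(n-2)}$, while the terms with $\gamma x_0 \neq x_0$ are regular at $x_0$ and, evaluated at $y=0$, sum to
\begin{equation*}
    m = \sum_{\gamma \in G,\ \gamma x_0 \neq x_0} |J_\gamma(x_0)|^{\frac{n-2}{2n}}\, |\gamma x_0|^{-(n-2)} \geq 0.
\end{equation*}
This gives non-negativity of the mass at each end in the scalar-flat case. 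For nonnegative (rather than identically zero) scalar curvature I would deform within the conformal class to a scalar-flat representative using the positive Green's function of $L_g$; since $S_g \geq 0$ the deforming factor is subharmonic and tends to $1$, so passing to the scalar-flat metric does not increase the mass, and non-negativity of the scalar-flat mass yields non-negativity of the original.

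For the rigidity statement, $m=0$ forces the series above to contain no term with $\gamma x_0 \neq x_0$, i.e.\ $G = \Gamma$ fixes $x_0$. Then $G$ is finite, $M' = \mathbb{S}^n/\Gamma$, and $\hat u = c\,|y|^{-(n-2)}$ globally, so the blow-up metric $\hat u^{4/(n-2)}\delta = c'\,|y|^{-4}\delta$ is, after inversion, the flat metric on $(\mathbb{R}^n,\delta)/\Gamma$; conversely this model visibly has vanishing mass. Finally, for the optimal decay rate, scalar-flatness together with conformal flatness makes the end conformal factor harmonic with respect to $\delta$, with $u \to 1$ at infinity. Expanding the bounded harmonic function $u-1$ in decaying solid harmonics, the slowest-decaying mode is the rotationally symmetric one $r^{-(n-2)}$, whose coefficient is exactly $m$; hence $g_{ij} - \delta_{ij} = O(r^{-(n-2)})$ with the matching derivative bounds, giving ALE order $n-2$, and this is optimal since a nonzero mass makes the $r^{-(n-2)}$ term genuinely present while $m=0$ returns the exactly flat model. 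The main obstacle I anticipate is the rigorous justification of the Poincaré-series representation together with its convergence and the precise identification of its constant term with the ADM mass, including the bookkeeping of the orbifold conformal weights; once this is in place, positivity and rigidity follow immediately from the termwise positivity of the series.
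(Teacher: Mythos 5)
Your proposal is correct in outline, but it takes a genuinely different route from the paper. After the same first reduction (the paper normalizes $u$ solving $-\Delta u + c(n)Su = 0$ with $u \to 1$ at infinity, gets $A \le 0$ in the expansion, hence $m(u^{\frac{4}{n-2}}g) \le m(g)$, so it suffices to treat scalar-flat metrics), the paper does \emph{not} unroll to the developing image and sum over the group. Instead it passes to a finite manifold cover $N$ of the compactification via Selberg's lemma (Proposition \ref{manifold cover}), pulls back the orbifold Green's function $G_i$ to $\overline{G}_i = P^*G_i$, compares it with the minimal Green's function $G_s$ on $N$ at a preimage $s$, and runs the maximum principle dichotomy: either $\overline{G}_i > G_s$, whence the constant terms satisfy $E_i > E_s$ and positivity follows by quoting the manifold positive mass theorem for LCF metrics (Proposition 4.3 of \cite{SYconformal}) on $N$ as a black box; or $\overline{G}_i = G_s$, which forces every deck transformation to fix $s$, so $N \cong \mathbb{S}^n$, $M'$ is the round football, and the rigidity alternatives (Schwarzschild ALE with $m>0$ versus the flat cone with $m=0$) are checked directly. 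Your Poincaré-series argument is, in effect, the engine \emph{inside} Schoen--Yau's Proposition 4.3, generalized from manifolds to the orbifold quotient: it buys an explicit termwise-positive formula for the mass and immediate rigidity (vanishing mass kills every non-stabilizer term), and it avoids Selberg's lemma; what it costs is precisely the analytic burden you flag, plus three points worth making explicit. First, convergence of $\sum_{\gamma}|\gamma'(x_0)|^{\frac{n-2}{2}}$ is governed by the critical exponent $\delta(G)$, not directly by $\dim_{\mathcal{H}}(\Lambda)$, so you need $\delta(G) \le \dim_{\mathcal{H}}(\Lambda) < \frac{n-2}{2}$ (the strict bound coming, as in the paper, from Proposition 4.7 of \cite{SYconformal} together with Corollary 3.4 of \cite{Nayatani}), and the exact identity between the lifted Green's function and the series requires $\Lambda$ to be removable for the bounded harmonic difference, which again uses $\dim_{\mathcal{H}}(\Lambda) < \frac{n-2}{2}$. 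Second, with several ends, or with interior orbifold points blown up as in Lemma \ref{ALE orbifold blow-up}, the Kelvin transform of the end factor extends to a \emph{superposition} of Green's functions rather than a single $G_{x_0}$, so the constant term acquires additional strictly positive cross terms $G_{x_j}(x_0)$; this is harmless for positivity but must be included for rigidity, since $m=0$ also has to rule out other ends. Third, in the equality case with merely $S \ge 0$ you need the extra line $m(g)=0 \Rightarrow A = 0 \Rightarrow \int_M uS\,dv = 0 \Rightarrow S \equiv 0$ before invoking scalar-flat rigidity; and note that the global description $M' = \Omega/G$ you start from is licensed by Theorem \ref{good orbifold}, which the paper establishes separately. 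Your treatment of the optimal decay rate via harmonic expansion of the end conformal factor matches the paper's proof of Corollary \ref{order of decay} (Bôcher superposition plus the absence of log terms in the LCF Green's function expansion).
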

The optimal decay will be proved in Section \ref{good orbifolds proof} (See Corollary \ref{order of decay} and Remark \ref{remark of order of decay}). The positive mass part will be proved in Section \ref{PMT proof}. 

This is significant since the positive mass theorem for ALE manifolds with non-negative scalar curvature is not always true. See the counterexamples in \cite{Lebrun}. 
The optimal decay rate for obstruction-flat scalar-flat ALE metrics was previously proved by \cite{AcheViaclovsky}, but our proof in the LCF case is much easier by utilizing the conformal Green's function. 


For application, we consider the orbifold Yamabe problem. Given a Riemannian orbifold $(M,g)$, one wants to find a constant scalar metric in the conformal class $[g]$. Unlike the manifold Yamabe problem, this is not always solvable. Counterexamples are constructed in \cite{viaclovsky2010monopole}. One core missing ingredient is the positive mass theorem for ALE manifolds. With the help of Theorem \ref{PMT}, we can solve the orbifold Yamabe problem in the special case.
\begin{corollary}\label{orbifold Yamabe corollary}
If $(M^n,g)$ is an LCF compact orbifold with positive scalar curvature, 
then there exists a solution $\tilde{g} = u^\frac{4}{n-2} g$ to the orbifold Yamabe problem on $(M,g)$. 
\end{corollary}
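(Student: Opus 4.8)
The plan is to run the variational approach to the Yamabe problem on the orbifold $(M,g)$, reducing existence to a strict inequality for the orbifold Yamabe invariant, and then to establish that inequality using the positive mass theorem (Theorem~\ref{PMT}) exactly where Schoen uses the smooth positive mass theorem. Let $a = \frac{4(n-1)}{n-2}$, let $L_g = -a\Delta_g + S_g$ be the conformal Laplacian, and set
\[
Y(M,[g]) = \inf_{u \in W^{1,2}(M),\, u \not\equiv 0} \frac{\int_M \big(a|\nabla u|^2 + S_g u^2\big)\,dvol_g}{\big(\int_M |u|^{2^*}\,dvol_g\big)^{2/2^*}}, \qquad 2^* = \frac{2n}{n-2},
\]
where the Sobolev space and integrals are defined through the finite orbifold charts $\tilde U$. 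Since $S_g > 0$ we are in the positive case $Y(M,[g]) > 0$, and the only obstruction to attaining a minimizer is concentration of a minimizing sequence at a point. By the concentration–compactness dichotomy, carried out $\Gamma$-equivariantly inside each chart, concentration at a point $p$ with isotropy group $\Gamma_p$ forces $Y(M,[g]) \ge Y(\mathbb{S}^n/\Gamma_p) = |\Gamma_p|^{-2/n} Y(\mathbb{S}^n)$. Hence it suffices to exhibit a single admissible test function whose Yamabe quotient lies strictly below the smallest local threshold, i.e. to show $Y(M,[g]) < \min_p Y(\mathbb{S}^n/\Gamma_p)$ (trivial $\Gamma_p$ giving the threshold $Y(\mathbb{S}^n)$ at smooth points).

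Fix a point $p^*$ realizing the minimal threshold; since nontrivial isotropy strictly lowers the threshold, $p^*$ is an orbifold point whenever $M$ is singular. I would build the test function in a flat conformal chart around $p^*$ — available because $(M,g)$ is LCF — from the standard bubble on $\mathbb{R}^n/\Gamma_{p^*}$, truncated and corrected by the conformal Green's function $G_{p^*}$ of $L_g$ at $p^*$. In these coordinates the LCF condition gives the clean expansion $G_{p^*}(x) = r^{2-n} + A + o(1)$ as $r\to 0$, and the standard energy computation of Schoen and of Lee–Parker, performed on the quotient $\mathbb{R}^n/\Gamma_{p^*}$, yields
\[
Q(\mathrm{test}) = Y(\mathbb{S}^n/\Gamma_{p^*}) - c_n\, A\, \epsilon^{\,n-2} + o(\epsilon^{\,n-2}), \qquad c_n > 0.
\]
Thus a strictly positive constant term $A$ gives $Q(\mathrm{test}) < Y(\mathbb{S}^n/\Gamma_{p^*})$ for small $\epsilon$, which is precisely the strict inequality sought.

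It remains to show $A > 0$, and this is where Theorem~\ref{PMT} enters. As reviewed in Section~\ref{good orbifolds proof}, conformally changing $g$ by $G_{p^*}^{4/(n-2)}$ turns a punctured neighborhood of $p^*$ into an LCF, scalar-flat, ALE \emph{orbifold} with one distinguished end modeled on $\mathbb{R}^n/\Gamma_{p^*}$, of optimal order $\tau = n-2 > \frac{n-2}{2}$, and the ADM mass of that end is a positive multiple of $A$. Theorem~\ref{PMT} therefore gives $A \ge 0$, with $A = 0$ if and only if the blow-up is isometric to flat $\mathbb{R}^n/\Gamma_{p^*}$, i.e. if and only if $(M,g)$ is conformal to the round football $\mathbb{S}^n/\Gamma_{p^*}$. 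In that case $(M,g)$ already carries a constant-scalar-curvature metric in $[g]$ and the corollary holds trivially; otherwise $A > 0$, the strict inequality above holds, the minimizing sequence for $Y(M,[g])$ is compact, and it produces a positive minimizer $u$. Orbifold elliptic regularity in the charts $\tilde U$ then shows $u$ is smooth and positive, so $\tilde g = u^{4/(n-2)} g$ solves the orbifold Yamabe problem.

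The main obstacle is the analysis at the singular point $p^*$: one must carry out the concentration–compactness dichotomy and the test-function energy expansion $\Gamma_{p^*}$-equivariantly on $\mathbb{R}^n/\Gamma_{p^*}$, and, crucially, identify the constant term $A$ in the orbifold Green's function with the ADM mass of the conformally blown-up ALE orbifold so that Theorem~\ref{PMT} applies. The LCF hypothesis is exactly what makes this identification clean — it removes the Weyl-curvature contributions to the Green's-function expansion, forces the blow-up to be scalar-flat of order $n-2$, and thereby matches the hypotheses of Theorem~\ref{PMT}; without it both the expansion of $G_{p^*}$ and the decay of the blow-up would be far more delicate, which is precisely the situation in which the orbifold Yamabe problem can fail.
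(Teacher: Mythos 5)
Your proposal is correct and follows essentially the same route as the paper: Schoen-type test functions at the singular point with the largest isotropy group, the LCF expansion $G = r^{2-n} + A + o(1)$ identifying the constant term with the ADM mass of the conformal blow-up, Theorem~\ref{PMT} giving $A > 0$ to force the strict inequality $Y_{orb}(M,[g]) < Y(\mathbb{S}^n)\min_i|\Gamma_i|^{-2/n}$, and the football case (the rigidity case $A=0$ of Theorem~\ref{PMT}) handled trivially by the round quotient metric. The only cosmetic difference is that you re-derive the threshold/existence dichotomy via $\Gamma$-equivariant concentration--compactness, whereas the paper simply cites the orbifold existence theorem of Akutagawa--Botvinnik (Theorem~\ref{t:orbifold Yamabe existence}).
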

This will be proved in Section~\ref{PMT proof}.


 \textbf{Acknowledgments}: We thank Jeff Viaclovsky for patient guidance and helpful discussions. The author is supported by NSF Research Grant DMS-2105478.

\section{Preliminaries}
\subsection{Yamabe invariant}
Let us recall the Yamabe metric and Yamabe invariant:
\begin{definition}[Yamabe constant]
    Let $(M,g)$ be a compact Riemannian manifold. The \textbf{Yamabe constant} associated with its conformal class $[g]$ is
    \begin{align*}
        Y(M,[g]) = \inf_{\tilde{g}\in[g]}\operatorname{Vol}(\tilde{g})^{-\frac{n-2}{n}}\int_M S({\tilde{g})} dvol_{\tilde{g}}.
    \end{align*}
    The \textbf{Yamabe metric} is the metric in $[g]$ that achieves the above infimum.

    The \textbf{Yamabe invariant} of $M$ is the maximum of the Yamabe constant among all conformal classes $[g]$:
    \begin{align*}
        Y(M) = \sup_{[g]} Y(M,[g]).
    \end{align*}
\end{definition}
Note that $Y(M,[g])$ is scaling invariant. From the famous resolution of the Yamabe problem by \cite{Ya60, Tr68, Au76, Sch84}, such a Yamabe minimizer is always realized by a smooth metric with constant scalar curvature. Note that the Yamabe metric with a strictly positive Yamabe invariant implies the universal upper bound for the Sobolev constant, hence no local collapsing.

We also need a generalization of the Yamabe constant and Yamabe invariant on orbifolds. Denote the \textbf{orbifold Yamabe constant} of an orbifold $(M,g)$:
\begin{align*}
    Y_{orb}(M,[g]) = \inf_{\tilde{g}\in[g]}\operatorname{Vol}(\tilde{g})^{-\frac{n-2}{n}}\int_M S({\tilde{g}}) dvol_{\tilde{g}},
\end{align*}
and the \textbf{orbifold Yamabe invariant}:
\begin{align*}
    Y_{orb}(M) = \sup_{[g]} Y_{orb}(M,[g]).
\end{align*}
Parallel to the manifold Yamabe problem, an analog of Aubin's existence theorem is as follows:
\begin{theorem}[\cite{AB04},\cite{akutagawa2012computations}]\label{t:orbifold Yamabe existence}
Let $(M,g)$ be a Riemannian orbifold with isolated singularities $\{p_1, ...,p_k\}$, with orbifold groups $G_i \leq {\rm{O}}(n)$, for $i=1,...,k$. Then:
\begin{align*}
    Y_{orb}(M,[g])\leq Y(\mathbb{S}^n)\min_{i}|G_i|^{-\frac{2}{n}}.
\end{align*}
Furthermore, if this inequality is strict, then there exists a smooth conformal metric $\tilde{g} = u^{\frac{4}{n-2}}g$, which minimizes the Yamabe functional, and thus has constant scalar curvature.
\end{theorem}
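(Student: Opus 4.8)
The plan is to adapt Aubin's subcritical approximation argument to the orbifold setting. Writing $a_n = \frac{4(n-1)}{n-2}$, consider the Yamabe functional
\[
Q(u) = \frac{\int_M \left( a_n |\nabla u|_g^2 + S_g u^2\right) dvol_g}{\left(\int_M |u|^{\frac{2n}{n-2}} dvol_g\right)^{\frac{n-2}{n}}},
\]
whose infimum over $0\not\equiv u \in W^{1,2}(M)$ is $Y_{orb}(M,[g])$. First I would prove the upper bound. Near each singular point $p_i$ the orbifold is modeled on $\tilde U/G_i$ with $\tilde U\subseteq\mathbb{R}^n$, so I transplant the cut-off standard bubbles $u_\epsilon(x)=\bigl(\epsilon/(\epsilon^2+|x|^2)\bigr)^{(n-2)/2}$ to a $G_i$-invariant test function supported near $p_i$. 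On the quotient both the Dirichlet energy $E$ and the $L^{2n/(n-2)}$ integral $V$ equal the Euclidean-bubble values divided by $|G_i|$; since the denominator carries the exponent $\frac{n-2}{n}<1$, the quotient picks up the factor $(E/V^{(n-2)/n})\,|G_i|^{-1+(n-2)/n} = Y(\mathbb{S}^n)\,|G_i|^{-2/n}$ as $\epsilon\to 0$. Minimizing over $i$ yields the claimed inequality.

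For existence when the inequality is strict, fix $2\le q < \frac{2n}{n-2}$ and minimize the subcritical functional $Q_q$ (same numerator, denominator $\bigl(\int_M |u|^q dvol_g\bigr)^{2/q}$). Because $M$ is a compact orbifold with isolated singularities, the embedding $W^{1,2}(M)\hookrightarrow L^q(M)$ is still compact in the subcritical range: this is checked chartwise, the orbifold charts near $p_i$ reducing to $G_i$-invariant functions on $\tilde U\subseteq\mathbb{R}^n$ where Rellich--Kondrachov applies. Hence $\mu_q := \inf Q_q$ is attained by a nonnegative $u_q$, normalized by $\int_M u_q^q\, dvol_g = 1$, solving the subcritical Yamabe equation $\mathcal{L}_g u_q = \mu_q u_q^{q-1}$ weakly, where $\mathcal{L}_g = -a_n\Delta_g + S_g$ is the conformal Laplacian; then $u_q>0$ and is orbifold-smooth by elliptic regularity and the strong maximum principle. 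As $q\uparrow \frac{2n}{n-2}$ one has $\mu_q \to Y_{orb}(M,[g])$, and the $u_q$ are uniformly bounded in $W^{1,2}(M)$, so after passing to a subsequence either they converge strongly in $L^{2n/(n-2)}$ — in which case the limit is the desired critical minimizer — or their mass concentrates.

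The crux is to rule out concentration under the strict inequality. If no strong $L^{2n/(n-2)}$ limit exists, a Pohozaev/rescaling analysis shows the mass concentrates at a single point $x_0$, and rescaling $u_q$ about $x_0$ produces a nontrivial limiting bubble on the tangent cone of $M$ at $x_0$. If $x_0$ is a regular point the cone is $\mathbb{R}^n$ and energy accounting forces $Y_{orb}(M,[g]) = Y(\mathbb{S}^n)$; if $x_0 = p_i$ the cone is $\mathbb{R}^n/G_i$, whose best Sobolev constant is $Y(\mathbb{S}^n)\,|G_i|^{-2/n}$ by the same quotient computation as in the upper bound, forcing $Y_{orb}(M,[g]) = Y(\mathbb{S}^n)\,|G_i|^{-2/n}$. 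In every case concentration gives $Y_{orb}(M,[g]) \ge Y(\mathbb{S}^n)\min_i |G_i|^{-2/n}$, contradicting strict inequality. Therefore the $u_q$ converge strongly to a positive minimizer $u$ of the critical functional; it solves the Yamabe equation, and bootstrapping elliptic regularity on the local covers $\tilde U$ shows $u$ is orbifold-smooth, so $\tilde g = u^{4/(n-2)}g$ has constant scalar curvature. I expect this blow-up/concentration step to be the main obstacle: one must carry out the rescaling carefully at the orbifold points and verify that the best constant on $\mathbb{R}^n/G_i$ is exactly $Y(\mathbb{S}^n)\,|G_i|^{-2/n}$, which is precisely the threshold appearing in the statement.
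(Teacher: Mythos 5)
The paper does not prove Theorem~\ref{t:orbifold Yamabe existence} itself but quotes it from \cite{AB04} and \cite{akutagawa2012computations}, and your proposal reconstructs essentially the argument given there: radially symmetric (hence $G_i$-invariant) cut-off bubbles at each singular point give the upper bound with the factor $|G_i|^{-2/n}$, while subcritical approximation together with the fact that the concentration threshold is the local Yamabe constant $\min\bigl(Y(\mathbb{S}^n),\min_i Y(\mathbb{R}^n/G_i)\bigr)=Y(\mathbb{S}^n)\min_i|G_i|^{-2/n}$ produces a minimizer under strict inequality. Your outline is correct, including the crux you flag: $Y(\mathbb{R}^n/G_i)=Y(\mathbb{S}^n)|G_i|^{-2/n}$, whose lower-bound half follows by lifting a $G_i$-invariant test function to $\mathbb{R}^n$ (both integrals scale by $|G_i|$, so the quotient gains the factor $|G_i|^{2/n}$ and the sharp Euclidean Sobolev inequality applies).
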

To fully solve the orbifold Yamabe problem, we need to show the strict inequality. If one wants to use Schoen's test function from \cite{Sch84}, a positive mass theorem for ALE manifolds is needed. However, a positive mass theorem for ALE manifolds is not always true. See the counterexamples of \cite{Lebrun} for negative mass ALE metrics. This makes the orbifold Yamabe problem more subtle than in the manifold case. In fact, an example of the non-existence of the constant scalar metric in certain conformal classes is given in \cite{viaclovsky2010monopole}. See also \cite{ju2023conformally} for recent developments.

In this paper, we prove the orbifold Yamabe problem for LCF orbifolds with a positive orbifold Yamabe constant. See Corollary \ref{orbifold Yamabe corollary}.


\subsection{Locally conformally flat manifolds and Kleinian structure}\label{s:lcf and Kleinian}
We are interested in the locally conformally flat (LCF) manifolds:
\begin{definition}[The LCF manifolds]\label{def 1}
    A Riemannian manifold $(M^n,g)$ is called \textbf{locally conformally flat (LCF)} if for every point $p\in M$, there exists a neighborhood $U$, a chart $\phi:\mathbb{R}^n\to U$ such that $\phi^*g(x)=\lambda(x)g_{\mathbb{R}^n}$, for some smooth function $\lambda:\mathbb{R}^n\to\mathbb{R}^+$.

\end{definition}

In order to understand the structure of such manifolds, we recall Schoen-Yau's embedding theorem from \cite{SYconformal}, which is later verified by \cite{ChodoshLi}, \cite{LUS}.
\begin{theorem}[Schoen-Yau]\label{thm SY}
    Let $(M^n,g)$ be an $n$-dimensional ($n\geq 3$) complete LCF manifold with scalar curvature $S(g)\geq 0$, then for the universal cover $(\tilde{M},g)$ there exists a conformal map $\phi:\tilde{M}\to \mathbb{S}^n$ which is an embedding.
\end{theorem}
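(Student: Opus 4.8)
The plan is to separate the two hypotheses: the LCF condition produces a conformal immersion of $\tilde M$ into $\mathbb{S}^n$ with no reference to curvature, and the hypothesis $S(g)\ge 0$ is then used to upgrade this immersion to an embedding. First I would construct the \emph{developing map}. Being LCF means $(M,g)$ carries a flat conformal structure, i.e.\ an atlas of charts into $\mathbb{S}^n$ whose transition maps are restrictions of M\"obius transformations; this is exactly a $(G,X)$-structure with $X=\mathbb{S}^n$ and $G=\mathrm{Conf}(\mathbb{S}^n)\cong \mathrm{O}(n+1,1)$. By the standard theory of geometric structures, the universal cover $\tilde M$ then admits a conformal local diffeomorphism $\Phi:\tilde M\to\mathbb{S}^n$, equivariant under a holonomy homomorphism $\rho:\pi_1(M)\to G$. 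In particular $\Phi$ is an immersion, and the entire difficulty is to prove it is injective.

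Next I would set up the analytic comparison. Pulling back the round metric gives $g_0:=\Phi^*g_{\mathbb{S}^n}$, a locally round metric on $\tilde M$ (so $S(g_0)=n(n-1)$) that is conformal to $\tilde g=\pi^*g$. Writing $\tilde g=u^{4/(n-2)}g_0$ with $u>0$, the transformation law for the conformal Laplacian $L_{g_0}=-\tfrac{4(n-1)}{n-2}\Delta_{g_0}+S(g_0)$ reads $L_{g_0}u=S(\tilde g)\,u^{(n+2)/(n-2)}$, so the hypothesis $S(\tilde g)\ge 0$ says precisely that $u$ is a positive supersolution, $L_{g_0}u\ge 0$. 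Completeness of $\tilde g$ forces $u\to\infty$ along any exhaustion of $\tilde M$ whose $\Phi$-images approach the complement $\Lambda:=\mathbb{S}^n\setminus\Phi(\tilde M)$ (otherwise $\tilde g$ would remain comparable to the round metric near $\Lambda$ and fail to be complete).

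The crux, and the step I expect to be by far the hardest, is to show that the limit set is small: $\dim_{\mathcal H}(\Lambda)\le \tfrac{n-2}{2}$. This is the only place where $S\ge 0$ enters essentially. The mechanism is to use the supersolution property of $u$ together with the Green's function of $L_{g_0}$ to obtain quantitative growth and integrability estimates for $u$ near $\Lambda$, and then to run a capacity/Sobolev argument showing that these estimates would be violated if $\Lambda$ exceeded the critical dimension $\tfrac{n-2}{2}$. This Hausdorff-dimension bound is precisely the delicate point of \cite{SYconformal} whose proof is revisited in \cite{ChodoshLi} and \cite{LUS}; I would either invoke their analysis directly or reprove the capacity estimate in the present setting.

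Finally I would convert smallness of $\Lambda$ into injectivity. Completeness of $\tilde g$ together with the blow-up of $u$ near $\Lambda$ yields unique path lifting for $\Phi$ (a lift cannot escape toward $\Lambda$ in finite $\tilde g$-length), so $\Phi$ is a covering map onto the open set $\Omega:=\mathbb{S}^n\setminus\Lambda$. Since $\tilde M$ is simply connected, $\Phi$ is then injective as soon as $\Omega$ is simply connected. But $\dim_{\mathcal H}(\Lambda)\le\tfrac{n-2}{2}<n-2$ for $n\ge 3$, so $\Lambda$ does not separate $\mathbb{S}^n$ and, by a standard dimension-counting/transversality argument, any loop and spanning disk can be displaced off the low-dimensional set $\Lambda$ (for $n=3$ one uses that $\Lambda$ is totally disconnected). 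Hence $\pi_1(\Omega)=0$, $\Phi$ is a homeomorphism onto $\Omega$, and the developing map is the desired conformal embedding.
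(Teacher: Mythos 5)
You should first note that the paper does not prove this theorem at all: it is quoted from \cite{SYconformal}, with the later verifications \cite{ChodoshLi}, \cite{LUS} cited, so the comparison is against that literature. Your steps (1) and (2) — the developing map from the $(\mathrm{Conf}(\mathbb{S}^n),\mathbb{S}^n)$-structure and the supersolution equation $L_{g_0}u = S(\tilde g)u^{(n+2)/(n-2)} \ge 0$ — are correct and standard. The genuine gap is in step (3), and it is a circularity. The Schoen--Yau estimate $\dim_{\mathcal H}\le \tfrac{n-2}{2}$ (Proposition 4.7 of \cite{SYconformal}, also invoked later in this paper) is proved for complete conformal metrics on \emph{domains} $\Omega\subseteq\mathbb{S}^n$, i.e.\ it presupposes that $u$ is a single-valued function on an embedded image, so that one can compare $u$ with Green's functions on the sphere and run the capacity argument against $\partial\Omega$. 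Before injectivity is known, your $\Lambda:=\mathbb{S}^n\setminus\Phi(\tilde M)$ is the complement of the image of a possibly infinite-sheeted immersion; $u$ lives on $\tilde M$, sheets with unrelated values of $u$ may accumulate at every point of $\mathbb{S}^n$, and neither the growth estimates ``near $\Lambda$'' nor the capacity comparison descends to the sphere. So the dimension bound cannot be established before the injectivity it is supposed to produce.

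Step (4) has a second, related gap: the obstruction to path lifting occurs over points of $\Omega$, not of $\Lambda$. A lift fails to extend precisely when it has infinite $\tilde g$-length while its image converges to some $y\in\Omega$, i.e.\ when $u\to\infty$ along a divergent sequence in $\tilde M$ whose images stay in a compact subset of $\Omega$; blow-up of $u$ ``near $\Lambda$'' says nothing about this, and a local diffeomorphism from a complete manifold is in general \emph{not} a covering onto its image. The actual mechanism — in \cite{SYconformal}, and this is exactly what \cite{ChodoshLi} and \cite{LUS} supply rigorously — is a positive mass theorem with arbitrary ends: assuming $\Phi(p)=\Phi(q)$ with $p\ne q$, one blows up $\tilde g$ at $p$ by the Green's function of the conformal Laplacian (the LCF hypothesis gives an asymptotically flat end of order $n-2$), compares $G_p$ with the pullback $\Phi^*G_{\Phi(p)}$ of the spherical Green's function, and finds that the second sheet at $q$ forces strictly negative mass, a contradiction; $S\ge 0$ enters through the PMT, and the bound $\dim_{\mathcal H}(\Lambda)\le\tfrac{n-2}{2}$ for the limit set is a \emph{consequence} derived after injectivity, not an input. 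Your plan inverts this logical order, and since the result verified in \cite{ChodoshLi}, \cite{LUS} is precisely the injectivity of the developing map, invoking ``their analysis directly'' at your step (3) would amount to citing the theorem being proved.
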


Now, we introduce several concepts of Kleinian manifolds:

The conformal embedding map $\phi$ induces an injective homomorphism, which is called the \textbf{holonomy representation}, from the deck transformation of $\tilde{M}$, $\operatorname{Deck}(\tilde{M})\cong \pi_1(M)$, to the conformal group of $\mathbb{S}^n$, $C(n)$:
\begin{align*}
        \rho:\pi_1(M)\to C(n).
\end{align*}

Denote $\Omega\subseteq\mathbb{S}^n$ as \textbf{the domain of discontinuity}, which is the set in which $\rho(\pi_1(M))$ acts on properly discontinuously, and $\Lambda$ as \textbf{the limit set} of $\rho(\pi_1(M))$, which is the complement of $\Omega$ and is the minimal invariant closed subset of $\rho(\pi_1(M))$. A manifold $(M,g)$ is called \textbf{Kleinian} if $(M,g)$ is conformally equivalent to some $\Omega/\Gamma$. 

In addition, we introduce the Liouville theorem \cite{conformalbook}:
\begin{theorem}[Liouville]\label{thm L}
    Let $U$, $V$ be open connected subsets of $\mathbb{S}^n$, $n\geq 3$, $f:U\to V$ be a conformal map. Then $f$ can be uniquely extended to a conformal map $f:\mathbb{S}^n\to \mathbb{S}^n$.
\end{theorem}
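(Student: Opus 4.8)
The plan is to reduce the statement to Euclidean space and then exploit the rigidity that flatness of both metrics imposes on the conformal factor once $n\geq 3$. First I would compose $f$ with stereographic projections, which are themselves conformal diffeomorphisms between $\mathbb{S}^n$ minus a point and $\mathbb{R}^n$; this reduces everything to a smooth conformal map $f:U\to V$ between connected open subsets of $\mathbb{R}^n$, where conformality means $f^*g_{\mathbb{R}^n}=e^{2\phi}g_{\mathbb{R}^n}$ for a smooth factor $e^{2\phi}$. It then suffices to show that every such $f$ is the restriction of a M\"obius transformation, i.e.\ an element of the group generated by Euclidean isometries, dilations, and the inversion $x\mapsto x/|x|^2$, since each of these extends to a global conformal diffeomorphism of $\mathbb{S}^n$.

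The key step, and the only place where $n\geq 3$ is genuinely used, is the conformal transformation law of the Schouten tensor $P=\frac{1}{n-2}\big(\mathrm{Ric}-\frac{R}{2(n-1)}g\big)$. Since $g_{\mathbb{R}^n}$ is flat, both $g_{\mathbb{R}^n}$ and $f^*g_{\mathbb{R}^n}=e^{2\phi}g_{\mathbb{R}^n}$ have vanishing Schouten tensor, so the formula
\[
P^{e^{2\phi}g}_{ij}=P^{g}_{ij}-\nabla_i\nabla_j\phi+\nabla_i\phi\,\nabla_j\phi-\tfrac12|\nabla\phi|^2 g_{ij}
\]
collapses to the overdetermined system
\[
\partial_i\partial_j\phi=\partial_i\phi\,\partial_j\phi-\tfrac12|\nabla\phi|^2\delta_{ij}.
\]
In dimension two this transformation law degenerates and the constraint is vacuous, which is precisely why Liouville rigidity fails there; for $n\geq 3$ it is the engine of the argument.

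To integrate the system I would substitute $\psi:=e^{-\phi}$, a positive smooth function. A direct computation using the displayed equation gives $\partial_i\partial_j\psi=\tfrac12 e^{-\phi}|\nabla\phi|^2\,\delta_{ij}=:c\,\delta_{ij}$, so the Hessian of $\psi$ is a pointwise scalar multiple of the identity. Differentiating once more and invoking symmetry of the third partial derivatives forces $c$ to be constant (here any $n\geq 2$ suffices), whence $\psi(x)=\tfrac{c}{2}|x|^2+b\cdot x+a$ is a quadratic polynomial with $c,a\in\mathbb{R}$ and $b\in\mathbb{R}^n$. Thus the conformal factor $\lambda=e^{\phi}=\psi^{-1}$ is exactly of the form produced by a M\"obius transformation.

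It remains to recover $f$ itself from its conformal factor, and this is where I expect the main obstacle to lie, since the PDE analysis only pins down $\phi$ and not the map. The cleanest route I see is to choose a model M\"obius transformation $m$ matching the quadratic $\psi$ together with the finite first-order jet of $f$ at one point; then $m^{-1}\circ f$ is conformal with constant factor $1$, hence a local Euclidean isometry, i.e.\ the restriction of a rigid motion, which is itself M\"obius. Therefore $f$ is M\"obius and extends to all of $\mathbb{S}^n$. Uniqueness of the extension is then immediate, since two conformal (hence real-analytic) maps of $\mathbb{S}^n$ agreeing on the open set $U$ must agree everywhere by the identity principle. If $f$ were assumed only $C^1$-conformal rather than smooth, an additional elliptic or quasiconformal regularity bootstrap would be needed before the Schouten computation could be applied.
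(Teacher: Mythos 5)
The paper does not prove this statement at all: Theorem \ref{thm L} is quoted as a classical result with a citation to \cite{conformalbook}, so there is no internal argument to compare against, and your proposal supplies the standard self-contained smooth proof of Liouville rigidity (stereographic reduction, vanishing Schouten tensor on both sides, the Hessian system for $\phi$, the substitution $\psi=e^{-\phi}$). The proof is essentially correct, and the step you flag as the main obstacle is handled correctly, though two points deserve to be made explicit. First, the quadratic $\psi=\tfrac{c}{2}|x|^2+b\cdot x+a$ is not arbitrary: your own identity $c=\tfrac12 e^{-\phi}|\nabla\phi|^2$ is equivalent to the pointwise constraint
\begin{equation*}
2c\,\psi=|\nabla\psi|^2,
\end{equation*}
which forces $\psi\equiv\mathrm{const}$ when $c=0$ and $\psi=\tfrac{c}{2}|x-x_0|^2$ (no constant term) when $c\neq 0$; this is exactly what guarantees that a M\"obius model $m$ with factor $\psi^{-1}$ exists (a similarity, respectively an inversion centered at $x_0$ composed with a rigid motion). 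Second, the assertion that $h=m^{-1}\circ f$ has constant factor $1$ is not automatic from first-jet matching alone, since the factor of $h$ is $\phi_h(x)=\phi(x)-\phi(h(x))$; but it does follow within your framework: matching the $1$-jet at $p$ gives $h(p)=p$, $Dh(p)=I$, hence $\phi_h(p)=0$ and $\nabla\phi_h(p)=\nabla\phi(p)-Dh(p)^{T}\nabla\phi(h(p))=0$, so the constant $c_h=\tfrac12 e^{-\phi_h(p)}|\nabla\phi_h(p)|^2=0$, whence $\nabla\phi_h\equiv 0$, $\phi_h\equiv 0$, and $h$ is a local isometry with identity jet, i.e.\ the identity near $p$ (alternatively, compose on the source side with the inversion $\iota_{x_0,r}$, $r^2=2/c$, where the factor cancellation is explicit). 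Finally, if $U$ cannot be placed in a single stereographic chart one first runs the argument on a small connected subdomain and then propagates along $U$ by connectedness, since a M\"obius map agreeing with $f$ on an open set agrees on all of $U$; this also yields the uniqueness claim without circularity. Your closing caveat about a regularity bootstrap below smoothness (Gehring--Reshetnyak) is correctly placed; in the Riemannian setting of the paper the maps are smooth, so the Schouten computation applies as stated.
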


From Theorem \ref{thm SY} and Theorem \ref{thm L}, we have that an LCF manifold with non-negative scalar curvature is Kleinian.

We also need to characterize the different elements in $C(n)$. Note that the group $C(n)$ has an extension to the action on the hyperbolic space $\mathbb{H}^{n+1}$ as isometries. In particular, $C(n)\cong\operatorname{Isom}(\mathbb{H}^{n+1})\leq {\rm{O}}(n+1,1)$, \cite{conformalbook}. If we consider the ball model, $(\mathbb{D}^{n+1},g_{\mathbb{H}^{n+1}})$, then by the Brouwer fixed point theorem, every element in $C(n)$ acts on $\mathbb{H}^{n+1}$ and will have fixed points, so we distinguish different conjugate classes by distinguishing their fixed points.
\begin{definition}\label{conjugate class}\cite{conformalbook}
    Let $g\in C(n)\cong \operatorname{Isom}(\mathbb{H}^{n+1})$:
    \begin{itemize}
            \item $g$ is called \textbf{elliptic} if $g$ has fixed points in $\mathbb{D}^{n+1}$;
            \item $g$ is called \textbf{parabolic} if $g$ has a single fixed point in $\partial\mathbb{H}^{n+1}\cong\mathbb{S}^n$;
            \item $g$ is called \textbf{hyperbolic} if $g$ has two distinct fixed points in $\partial\mathbb{H}^{n+1}\cong\mathbb{S}^n$.
    \end{itemize}
\end{definition}

\begin{remark}\label{elliptic}
     Note that if $g$ is of finite order, then $g$ is elliptic. Also, note that if $g$ is of infinite order, then the action $g$ on $\Omega$ is fixed point free.
\end{remark}

From \cite{conformalbook}, if $g\in C(n)$ is elliptic, then there exists $\gamma\in C(n)$ such that $\gamma^{-1} g\gamma\in {\rm{O}}(n+1)$. Later, we will use this to study the local isotropy subgroups.

A key property to identify $C(n)$ with a subgroup of ${\rm{O}}(n+1)$ is the famous Selberg Lemma.
\begin{theorem}[Selberg Lemma]\label{theorem.selberg}
    A finitely generated subgroup of a linear group over a field of characteristic 0 has a finite-index subgroup which is torsion-free.
\end{theorem}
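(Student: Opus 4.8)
The plan is to realize $\Gamma$ inside $GL_n(A)$ for a finitely generated arithmetic ring $A$ and then carve out a torsion-free finite-index subgroup by intersecting two congruence subgroups of coprime residue characteristic. First, since $\Gamma \leq GL_n(K)$ is finitely generated, the entries of a finite generating set together with their inverses generate a finitely generated $\mathbb{Z}$-subalgebra $A \subseteq K$; because $A$ sits inside the field $K$, it is an integral domain of characteristic $0$, and $\Gamma \leq GL_n(A)$.

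Next I would bring in the arithmetic of finitely generated $\mathbb{Z}$-algebras: such an $A$ is a Jacobson ring all of whose maximal ideals have finite residue fields, and maximal ideals of infinitely many residue characteristics occur. Choose maximal ideals $\mathfrak{m}, \mathfrak{m}' \subset A$ whose residue fields have distinct prime characteristics $p \neq p'$. Reduction modulo each ideal yields homomorphisms $\Gamma \to GL_n(A/\mathfrak{m})$ and $\Gamma \to GL_n(A/\mathfrak{m}')$ into finite groups, so their kernels $K$ and $K'$ (the congruence subgroups) have finite index in $\Gamma$.

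The key step, and the heart of the matter, is to show that every torsion element of $K$ has order a power of $p$. To this end I localize at $\mathfrak{m}$, obtaining a Noetherian local domain $A_{\mathfrak{m}}$ with maximal ideal $\hat{\mathfrak{m}}$ into which $A$ embeds. Suppose $g = I + X \in K$ has prime order $q \neq p$, with $X \in M_n(\hat{\mathfrak{m}})$. Since $q \neq p$, $q$ is a unit in $A_{\mathfrak{m}}$, and expanding
\[
(I+X)^q = I \quad\Longrightarrow\quad qX = -\left( \tbinom{q}{2} X^2 + \cdots + X^q \right)
\]
shows that $qX$, hence $X$, lies in $M_n(\hat{\mathfrak{m}}^2)$. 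Feeding this back inductively gives $X \in M_n(\hat{\mathfrak{m}}^k)$ for every $k$, so by Krull's intersection theorem $X \in M_n\!\left( \bigcap_k \hat{\mathfrak{m}}^k \right) = 0$ and $g = I$. Thus any prime dividing the order of a torsion element of $K$ must equal $p$; the identical argument at $\mathfrak{m}'$ shows that torsion in $K'$ has $p'$-power order.

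Finally, $K \cap K'$ still has finite index in $\Gamma$, and any finite-order element of it has order simultaneously a power of $p$ and a power of $p'$, forcing the order to be $1$; hence $K \cap K'$ is the desired torsion-free finite-index subgroup. I expect the main obstacle to be isolating $p$-power torsion within a single congruence subgroup — Minkowski's classical mod-$m$ argument over $\mathbb{Z}$ does not transplant directly to a general finitely generated ring — which is exactly why the proof reduces modulo two ideals of different characteristic and intersects, rather than working at one congruence level.
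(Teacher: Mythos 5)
Your proof is correct, but it does not parallel anything in the paper, because the paper never proves Selberg's lemma: it quotes Theorem \ref{theorem.selberg} as a classical fact (citing \cite{wehrfritz2012infinite}, Corollary 4.8, for the normal refinement) and only supplies the elementary upgrade from a finite-index torsion-free subgroup $H\leq G$ to the normal subgroup $N=\bigcap_{g\in G}gHg^{-1}$. What you give is a complete, self-contained argument of the standard congruence-subgroup type (essentially Alperin's elementary account): realize $\Gamma$ inside $GL_n(A)$ for a finitely generated $\mathbb{Z}$-subalgebra $A\subseteq K$ (your inclusion of the entries of the inverses of the generators correctly guarantees $\det g\in A^\times$), invoke the standard facts that maximal ideals of such an $A$ have finite residue fields and that infinitely many residue characteristics occur, and intersect the kernels of reduction at two maximal ideals of distinct residue characteristics $p\neq p'$. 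The heart of your argument is also right: for $g=I+X$ in the congruence kernel at $\mathfrak{m}$ with prime order $q\neq p$, the integer $q$ lies outside $\mathfrak{m}$ (since $\mathfrak{m}\cap\mathbb{Z}=p\mathbb{Z}$) and is therefore a unit in the Noetherian local ring $A_{\mathfrak{m}}$, so the relation $qX=-\bigl(\binom{q}{2}X^2+\cdots+X^q\bigr)$ bootstraps $X$ into $M_n(\hat{\mathfrak{m}}^k)$ for all $k$, and Krull's intersection theorem forces $X=0$; hence torsion in $K$ is $p$-power, torsion in $K'$ is $p'$-power, and $K\cap K'$ is torsion-free of finite index. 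Two remarks worth recording: first, your $K$ and $K'$ are kernels of homomorphisms into finite groups, hence normal in $\Gamma$, so $K\cap K'$ is already a \emph{normal} torsion-free finite-index subgroup --- your construction delivers directly the stronger statement the paper wants, making its conjugation-intersection step superfluous in this setting; second, the background facts you cite are standard (the general Nullstellensatz gives finite residue fields, and Noether normalization over $\mathbb{Z}[1/N]$ with lying over gives maximal ideals of all but finitely many residue characteristics), while the Jacobson property you mention is not actually needed --- only the existence of two maximal ideals of distinct residue characteristics.
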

In particular, we can use the fundamental result (Corollary 4.8, \cite{wehrfritz2012infinite}), there is a normal, finite-index, torsion-free subgroup. We should point it out that the construction of such normal subgroup is elementary: let $G$ be a finitely generated subgroup of linear group, and let $H\leq G$ be the finite-index subgroup produced by Theorem \ref{theorem.selberg}. Then, consider $N=\bigcap_{g \in G} g H g^{-1}$. Since $H$ is of finite-index, then the intersection is over a finite collection, which means $N$ is also of finte-index. Clearly, $N$ is normal and torsion-free. Thus, if $(M,g)$ is Kleinian, there exists a finite cover whose fundamental group is torsion-free.


\section{ALE ends and orbifold singularities}

\subsection{Totally umbilic submanifolds in \texorpdfstring{$\mathbb{R}^n$}{Rn}}

To study the conformal embedding, we need to understand the conformal invariant geometric objects. Motivated by this, we study the conformal invariant submanifolds. 

Let $N\subset (M,g)$ be a hypersurface. Consider the conformal change: $g\to \hat{g}= e^{2\psi} g$, then the second fundamental form of $N$:

\begin{align*}
    \hat{\text{II}}(X,Y)=\text{II}(X,Y)-\hat{g}(X,Y)\nabla\psi|_{normal}
\end{align*}
for every $X,Y$ smooth vector fields tangential to $N$.

\begin{definition}[Totally umbilic submanifolds]
    $N\subset(M,g)$ is called \textbf{totally umbilic} if $\forall x\in N$, 
    \begin{align*}
        {\text{II}}_x=\lambda(x) g|_N,
    \end{align*}
    for some smooth function $\lambda:N\to \mathbb{R}$.
\end{definition}
It can be easily seen that the totally umbilic submanifolds are invariant under conformal change. Note that this invariance is pointwise, i.e., the umbilic points are invariant under conformal change.

For $(M,g)$, if there is a conformal embedding $\phi: M \to \mathbb{S}^n$, and $(M,g)$ is not conformally equivalent to $(\mathbb{S}^n, g_{\mathbb{S}^n})$, then by the stereographic projection, we have a conformal embedding $\bar{\phi}:M \to \mathbb{R}^n$. Thus, all the totally umbilic submanifolds in $(M,g)$ are totally umbilic in $\mathbb{R}^n$, which gives a very strong constraint.

Recall the classical theorem of totally umbilic submanifolds in $\mathbb{R}^n$. This is an old theorem that dates back to Cartan. Since it is difficult to find the precise reference, we include the proof as well.

\begin{theorem}[Totally umbilic submanifolds in $\mathbb{R}^n$]\label{tus in Rn}
Let $n\geq 3$. The only closed totally umbilic submanifolds in $\mathbb{R}^n$ are the round $\mathbb{S}^{n-1}$.     
\end{theorem}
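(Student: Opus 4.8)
The plan is to show that a connected closed totally umbilic hypersurface $N\subset\mathbb{R}^n$ has a single principal curvature function $\lambda$, that $\lambda$ must in fact be a nonzero constant, and then to recognize $N$ as a round sphere by exhibiting a common center. I read the definition in the hypersurface sense, so that for a unit normal $\nu$ the Weingarten map $A$ (defined by $\nabla_X\nu=-AX$) satisfies $\mathrm{II}(X,Y)=\langle AX,Y\rangle$, and the umbilic condition $\mathrm{II}=\lambda\,g|_N$ is equivalent to $A=\lambda\,\mathrm{Id}$.

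The crucial step is to prove $\lambda$ is constant, and this is where the hypothesis $n\geq 3$ enters. Since the ambient space is flat, the Codazzi equation reads $(\nabla_X A)Y=(\nabla_Y A)X$ for all $X,Y$ tangent to $N$. Substituting $A=\lambda\,\mathrm{Id}$ and expanding gives $(\nabla_X A)Y=(X\lambda)Y$, so Codazzi becomes $(X\lambda)Y=(Y\lambda)X$. Because $\dim N=n-1\geq 2$, at each point I may choose linearly independent tangent vectors $X,Y$; the identity $(X\lambda)Y=(Y\lambda)X$ with $X,Y$ independent forces $X\lambda=Y\lambda=0$. Hence $d\lambda\equiv 0$ and, $N$ being connected, $\lambda$ is constant. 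I would emphasize that this is exactly the step that fails for $n=2$: for curves the second fundamental form is automatically ``umbilic'' and $\lambda$ (the curvature) need not be constant, which is why closed plane curves are not forced to be circles.

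With $\lambda$ constant I split into two cases. If $\lambda=0$, then $A=0$, so $\nabla_X\nu=0$ and $\nu$ is a fixed vector; thus $N$ lies in a hyperplane orthogonal to $\nu$. A compact hypersurface inside an affine hyperplane $\cong\mathbb{R}^{n-1}$ of the same dimension is open and closed in it, hence equal to all of $\mathbb{R}^{n-1}$, contradicting compactness. Therefore $\lambda\neq 0$, and I define the center map $c:N\to\mathbb{R}^n$ by $c(x)=x+\lambda^{-1}\nu(x)$. Differentiating, $dc(X)=X+\lambda^{-1}\nabla_X\nu=X-\lambda^{-1}AX=X-X=0$, so $c\equiv c_0$ is constant. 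Consequently $|x-c_0|=|\lambda|^{-1}$ for every $x\in N$, i.e. $N$ is contained in the round sphere $\Sigma$ of radius $|\lambda|^{-1}$ centered at $c_0$. Finally, since $N$ and $\Sigma$ have the same dimension, $N$ is open in $\Sigma$ (the inclusion is a local diffeomorphism) and closed (being compact); as $\Sigma\cong\mathbb{S}^{n-1}$ is connected, $N=\Sigma$, a round $\mathbb{S}^{n-1}$. For a disconnected $N$ the same argument applies componentwise, giving a disjoint union of round spheres.

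I expect the Codazzi step to be the main point: it is both where the argument genuinely uses $n\geq 3$ and where the rigidity (constancy of $\lambda$) originates. The subsequent identification of $N$ with a metric sphere via the center map is then a short computation together with a connectedness argument.
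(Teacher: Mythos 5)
Your proof is correct, and while the overall architecture matches the paper's (establish that the umbilic factor $\lambda$ is constant, split into the flat and spherical cases, then use the center map $c(x)=x+\lambda^{-1}\nu(x)$ to exhibit $N$ as a subset of a round sphere), the key rigidity step is obtained by a genuinely different mechanism. The paper derives pointwise $\mathrm{sec}_N=\lambda^2$ from the Gauss equation and then invokes the second Bianchi identity --- i.e.\ Schur's lemma --- to conclude $\lambda$ is constant; you instead feed $A=\lambda\,\mathrm{Id}$ directly into the Codazzi equation $(\nabla_X A)Y=(\nabla_Y A)X$ to get $(X\lambda)Y=(Y\lambda)X$, which kills $d\lambda$ as soon as $\dim N=n-1\geq 2$. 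Your route is not only more elementary but strictly stronger: Schur's lemma requires $\dim N\geq 3$, so the paper's argument as written does not cover $n=3$ (where $N$ is a surface and pointwise constant curvature does not imply constancy), whereas your Codazzi computation handles exactly the range $n\geq 3$ claimed in the statement --- and your remark about why the argument fails for curves correctly locates the dimensional threshold. You are also more careful at two points the paper glosses over: you explicitly rule out the $\lambda=0$ case for closed $N$ (constant normal, hence $N$ open and closed in a hyperplane by invariance of domain, contradicting compactness), and you upgrade the inclusion $N\subseteq\Sigma$ into equality $N=\Sigma$ via the same open-closed argument, rather than asserting $i(N)\simeq\mathbb{S}^{n-1}$ outright. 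The only cosmetic caveat is that the statement implicitly concerns connected submanifolds; your closing remark that disconnected $N$ yields a disjoint union of round spheres disposes of this.
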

\begin{proof}
    Let $i: N\to \mathbb{R}^n$ be a totally umbilic submanifold. By the Gauss-Codazzi equation, we have:
    \begin{align*}
        0 = \operatorname{Rm}_N(X,Y,Z,W) - \langle \text{II}(X,W), \text{II}(Y,Z)\rangle + \langle \text{II}(X,Z), \text{II}(Y,W)\rangle
    \end{align*}
    By the second Bianchi identity, we have:
    \begin{align*}
        sec_N = \lambda^2,
    \end{align*}
    where $\text{II} = \lambda g|_N$ for some constant $\lambda$. We may assume $\lambda\geq 0$.

    When $\lambda = 0$, then $N$ is the flat, totally geodesic hypersurface in $\mathbb{R}^n$, then $N$ is a subset of a flat $\mathbb{R}^{n-1}$;

    When $\lambda > 0$, then $N$ has a constant positive sectional curvature. Now consider the following.
    \begin{align*}
        c: N\to \mathbb{R}^n, x\mapsto i(x) + \frac{1}{\lambda}\vec{n},
    \end{align*}
    where $\vec{n}$ is the inner normal vector field. Then
    \begin{align*}
       \nabla_X c = \nabla_X i + \frac{1}{\lambda}\nabla_{\nabla_X i} \vec{n}  = \nabla_X i - \nabla_X i = 0 .
    \end{align*}
    Thus, we have $c$ is a constant and $|i - c| = \frac{1}{\lambda}$. Hence, the image of $i$ is a subset of the $\frac{1}{\lambda}$-ball centered on $c$. 

    If we assume $N$ to be closed, then $i(N)\simeq \mathbb{S}^{n-1}$, the round sphere.
\end{proof}

Combined with Theorem \ref{thm SY}, we have, if $(M,g)$ is an LCF manifold with non-negative scalar curvature, then the only closed totally umbilic submanifolds in the universal covers are the conformal round spheres.

\subsection{Harmonic functions on ALE manifolds}
Now, we recall the theory of harmonic functions in ALE manifolds.

\begin{theorem}[Bounded harmonic functions on ALE manifolds]\label{harmonic on ALE}
    Let $(M,g)$ be an ALE manifold. Let $u$ be a bounded function such that, outside a compact set $K$, on an ALE end it satisfies $\Delta u = 0$. Then, on this end, $\lim\limits_{x\to\infty} u(x) = c$ for some constant $c$. In fact, $u$ has an expansion at the ALE end: there exists $\epsilon>0$ such that
    \begin{align*}
        u(x) = c + A r^{2-n} +O(r^{2-n-\epsilon})
    \end{align*}
    as $r\to\infty$.
\end{theorem}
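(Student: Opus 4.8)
The plan is to reduce to the exactly asymptotically Euclidean (AE) model, establish convergence to a constant, upgrade this to a decay estimate via radial barriers, and finally extract the $r^{2-n}$ term using the Newtonian potential.

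First I would pass to the Euclidean cover. The end is diffeomorphic to $(\mathbb{R}^n\setminus B_r)/\Gamma$; pulling $u$ back along the covering projection $p:\mathbb{R}^n\setminus B_r\to E$ gives a bounded, $\Gamma$-invariant function $\hat u=u\circ p$ that is harmonic for the AE metric $\hat g=p^\ast g$, which has the same asymptotics $\hat g_{ij}=\delta_{ij}+O(r^{-\tau})$, $\partial^m\hat g_{ij}=O(r^{-\tau-|m|})$. Since the claimed expansion is a radial leading part $c+Ar^{2-n}$ plus a remainder, and $r$ is $\Gamma$-invariant, it suffices to prove the expansion for $\hat u$; I therefore assume $\Gamma$ trivial. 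Writing $\Delta_{\hat g}=\Delta_0+L$, where $\Delta_0$ is the flat Laplacian and $L$ is a second-order operator with coefficients $O(r^{-\tau})$ (and the corresponding derivative decay), the equation becomes $\Delta_0\hat u=-L\hat u$.

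Next I would prove $\hat u\to c$. Rescaling by $\Phi_R(x)=Rx$, the metrics $R^{-2}\Phi_R^\ast\hat g$ converge to $g_{\mathbb{R}^n}$ in $C^\infty_{loc}$, and the harmonic functions $\hat u\circ\Phi_R$ are uniformly bounded; interior Schauder estimates give a $C^1_{loc}$ subsequential limit $u_\infty$ that is bounded and harmonic on $\mathbb{R}^n\setminus\{0\}$, so removable singularities plus Liouville force $u_\infty$ constant, whence $\operatorname{osc}_{\partial B_R}\hat u\to 0$. The maximum principle on exterior annuli then forces $\max_{\partial B_R}\hat u$ and $\min_{\partial B_R}\hat u$ to a common limit $c$. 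Setting $w=\hat u-c$, I would use the radial barrier $r^{-\sigma}$: for fixed $0<\sigma<n-2$ one has $\Delta_0 r^{-\sigma}=\sigma(\sigma-n+2)r^{-\sigma-2}<0$, and the contribution of $L$ is of strictly lower order, so $r^{-\sigma}$ is a supersolution for $r$ large. Comparing $w$ with $Kr^{-\sigma}$, matched on an inner sphere where $w$ is already small and both vanishing at infinity, yields $|w|=O(r^{-\sigma})$ for every $\sigma<n-2$.

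Finally I would extract the asymptotics. With $w$ decaying faster than any $r^{-\sigma}$, $\sigma<n-2$, scaled elliptic estimates give $|\nabla^2 w|\lesssim r^{-2}\sup_A|w|$, so $f:=\Delta_0 w=-Lw$ satisfies $f=O(r^{-\tau-2-\sigma})$; choosing $\sigma\in(n-2-\tau,\,n-2)$ (nonempty since $\tau>0$) makes $f=O(r^{-\mu})$ with $\mu>n$, so $\int f$ converges. The Newtonian potential $Nf(x)=c_n\int|x-y|^{2-n}f(y)\,dy$ solves $\Delta_0(Nf)=f$ with $Nf=c_n\big(\int f\big)r^{2-n}+O(r^{2-n-\epsilon})$, while $w-Nf$ is harmonic and decaying on the exterior, hence expands in exterior spherical harmonics as $b_0 r^{2-n}+O(r^{1-n})$. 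Adding these gives $w=Ar^{2-n}+O(r^{2-n-\epsilon})$ with $\epsilon=\min(1,\tau)$, which descends to the stated expansion for $u$. I expect the last step to be the main obstacle: controlling $f=-Lw$ and verifying that its Newtonian potential has a clean $r^{2-n}$ leading term with a genuinely faster remainder, while correctly separating the indicial rates at $0$ (the constant) and at $2-n$ (the leading decay). The barrier step must come first, precisely so that $f$ decays fast enough for the potential integral and its multipole expansion to converge.
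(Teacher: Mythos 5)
Your proof is correct, but it takes a genuinely different route from the paper. The paper's argument (a sketch, following \cite{DK}) works in weighted Sobolev/H\"older spaces: it solves a Poisson equation between exceptional weights to split $u=u_0+u_1$ with $\Delta_{\mathbb{R}^n}u_0=0$ and $u_1=O(r^{-\epsilon})$, reads off the expansion of $u_0$ from the flat Green's function and Liouville, and then iterates using discreteness of the exceptional set to sharpen the remainder. You replace each piece of that machinery with an elementary substitute: the blow-down compactness plus Liouville and the annulus maximum principle give the limit $c$; the explicit radial barriers $r^{-\sigma}$, $0<\sigma<n-2$, give decay of $w=\hat u-c$ at every rate strictly between the indicial roots $0$ and $2-n$ (this is precisely the range where the paper's weighted isomorphism lives, so your barrier is the hands-on version of that estimate, with scaled Schauder supplying the derivative decay needed to control $f=-Lw$); and the Newtonian potential of $f$ together with the Kelvin transform/removable-singularity expansion of the leftover decaying harmonic function extracts the $r^{2-n}$ coefficient, playing the role of the paper's $u_0+u_1$ decomposition with no iteration needed. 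The passage to the $\Gamma$-cover of the end is legitimate since $r$ and the claimed expansion are $\Gamma$-invariant. What each approach buys: yours is self-contained and avoids weighted-space theory entirely; the paper's framework generalizes more readily, e.g.\ to the operator $\Delta+cS$ mentioned in the remark after the theorem and to full higher-order expansions. Two cosmetic points: your claimed rate $\epsilon=\min(1,\tau)$ should be weakened to any $\epsilon<\min(1,\tau)$, since $\sigma<n-2$ must be strict and the borderline case $\mu=n+1$ of the potential estimate picks up a logarithm --- harmless, as the statement only asserts the existence of some $\epsilon>0$; and to have $\Delta_0(Nf)=f$ classically you should note $f$ is locally H\"older, which your Schauder step already provides.
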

\begin{proof}
    Here, we only sketch the proof. The proof can be found in many references; see, for example, \cite{DK}.

    We first decompose $u = u_0 + u_1$ at the ALE end outside the compact set, where $u_0$ satisfies $\Delta_{\mathbb{R}^n} u_0 = 0$ and $u_1$ is $O(r^{-\epsilon})$ for some $\epsilon$. This is done by choosing a suitable weighted Sobolev space to solve the Poisson equation and using the corresponding Schauder estimate. This is possible because of the ALE assumption and the fact that the exceptional set of the Laplacian is discrete. Now, the expansion of $u_0$ follows from the Green's function expansion and the fact that the bounded global harmonic functions on $\mathbb{R}^n$ are constant. Finally, we can iteratively get better estimates using the fact that the exceptional weight is discrete.
\end{proof}

\begin{remark}
    The same techniques hold for equation $\Delta + c S$, where $c$ is some constant, since $S\in O(r^{-2-\tau})$, for an ALE end of order $\tau$.
\end{remark}

Now, we can prove the theorem about the end of the LCF, ALE manifolds.

\begin{theorem}
    For $(M,g)$, LCF, ALE manifolds of non-negative scalar curvature, if there exists $\phi$, where $\phi: M \to \mathbb{S}^n$ is a conformal embedding, then $(M,g)$ is an asymptotically Euclidean manifold, i.e., $\Gamma = \{e\}$ for all ends.
\end{theorem}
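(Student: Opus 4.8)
The plan is to show that, under the hypothesized conformal embedding, each ALE end develops onto a punctured neighborhood of a single point of $\mathbb{S}^n$; since such a punctured neighborhood is simply connected for $n\geq 3$, the end group $\Gamma_k$ must be trivial. First, since $M$ is non-compact it cannot be conformally $\mathbb{S}^n$, so $\phi(M)\subsetneq\mathbb{S}^n$; picking $q_0\in\mathbb{S}^n\setminus\phi(M)$ and composing with stereographic projection from $q_0$ yields a conformal embedding $\bar\phi:M\to\mathbb{R}^n$. On the image $\Omega:=\bar\phi(M)$ the push-forward $\bar\phi_*g$ lies in the flat conformal class, so we may write $\bar\phi_*g=u^{4/(n-2)}\delta$ for a positive function $u$ on $\Omega$. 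This is the crucial point: although in the ALE coordinates the metric is only \emph{close} to flat and need not be pointwise conformal to $\delta$, on the \emph{image} it genuinely is, so we can do linear potential theory there. The conformal transformation law for the scalar curvature then gives $-\tfrac{4(n-1)}{n-2}\Delta_\delta u=S_g\,u^{(n+2)/(n-2)}\geq 0$, so $u$ is a positive superharmonic function on $\Omega$ (harmonic when $S_g\equiv 0$).

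Fix an end $E_k\cong(\mathbb{R}^n\setminus B)/\Gamma_k$ and set $\Omega_k=\bar\phi(E_k)$; since $\bar\phi$ is a diffeomorphism onto its image, $\pi_1(\Omega_k)\cong\Gamma_k$. The heart of the argument is to identify $\Omega_k$ as a punctured neighborhood of one point $p_k$. Because $g$ is complete and ALE while the flat metric has finite local extent, $u$ must blow up along the frontier on which the single end of $\Omega_k$ accumulates. Estimating the flat diameter of the image cross-sections by $\operatorname{diam}_\delta\!\big(\bar\phi(\Sigma_r)\big)\leq \big(\sup_{\Sigma_r}u^{-2/(n-2)}\big)\operatorname{diam}_g(\Sigma_r)$ for the coordinate spheres $\Sigma_r=S_r/\Gamma_k$, and inverting $\mathbb{R}^n$ so as to move the frontier to infinity, Theorem~\ref{harmonic on ALE} applies to $u$ and forces the Green's-function leading behavior $u\sim a\,\operatorname{dist}(\cdot,p_k)^{2-n}$ with $a>0$. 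This makes the displayed diameter tend to $0$, so $\bar\phi(\Sigma_r)$ shrinks to a single point $p_k$, the frontier of $\Omega_k$ equals $\{p_k\}$, and $\Omega_k\cup\{p_k\}$ is a genuine neighborhood of $p_k$. Geometrically, the images $\bar\phi(\Sigma_r)$ are asymptotically round: total umbilicity is conformally invariant and the coordinate spheres are umbilic for the flat background up to the ALE error, so the limiting link is the round $\mathbb{S}^{n-1}$ of Theorem~\ref{tus in Rn}, consistent with $\Omega_k$ being a punctured flat ball.

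Once $\Omega_k$ is known to be a punctured neighborhood of $p_k$ in $\mathbb{R}^n$, it is simply connected since $n\geq 3$, whence $\Gamma_k\cong\pi_1(E_k)\cong\pi_1(\Omega_k)=\{e\}$; as $k$ is arbitrary, every end is AE. I expect the main obstacle to be precisely the geometric-analytic step of the second paragraph: ruling out that the developed end wraps around or accumulates on a frontier larger than a single point. This is exactly where the LCF hypothesis is indispensable, since without it an end can carry a nontrivial group while $M$ is simply connected (the Eguchi--Hanson metric, with end group $\mathbb{Z}_2$, is the standard counterexample). The control comes from combining the conformal invariance of total umbilicity, which forces the limiting link to be a round sphere by Theorem~\ref{tus in Rn}, with the asymptotic expansion of the conformal factor from Theorem~\ref{harmonic on ALE}, which pins the development of the end to an inversion of the flat cone and hence to a single ideal point; Liouville's Theorem~\ref{thm L} provides the underlying rigidity that makes this identification exact rather than merely asymptotic.
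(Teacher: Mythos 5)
Your overall strategy is genuinely different from the paper's. The paper never compactifies the quotient end directly: it chooses the stereographic basepoint \emph{inside} the image, at $\phi(o)$, so that the conformal factor $u$ with $\bar\phi^*\delta=u^{4/(n-2)}g$ is literally the conformal Green's function of $L_g$ with pole at $o$; this gives at once that $u$ is bounded on the end with vanishing constant term, hence $u=Ar^{2-n}+O(r^{2-n-\epsilon})$ by Theorem~\ref{harmonic on ALE}. It then conformally embeds a flat-cone annulus $A(2R,3R)\subseteq C(\mathbb{S}^{n-1}/\Gamma)$ into $A_g(R,4R)$, transplants the totally umbilic cross-section $\{\tfrac52R\}\times\mathbb{S}^{n-1}/\Gamma$ into $(M,g)$, and concludes from Theorem~\ref{tus in Rn} (closed totally umbilic hypersurfaces of $\mathbb{R}^n$ are round spheres) that $\Gamma=\{e\}$. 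Your mechanism --- develop the end onto a punctured neighborhood of a single point and use that such a neighborhood is simply connected for $n\geq 3$ --- is instead the mechanism the paper uses later, in Proposition~\ref{ALE structure}, but only for the AE ends of the universal cover, \emph{after} this theorem is available. Run directly on a quotient end it is a viable alternative route, and arguably more elementary than the umbilic-hypersurface argument; but your write-up leaves the decisive step open.

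The gap sits exactly where you predicted, in your second paragraph, and it has three parts. First, you project from $q_0\notin\phi(M)$; since $\mathbb{S}^n\setminus\phi(M)$ may have empty interior, $q_0$ can lie in the closure of the image, and then the end may develop onto a neighborhood of infinity in the chart (constant term $c>0$ in the expansion), so your picture of a finite frontier point $p_k$ with $u$ blowing up there simply fails in that case; one must rerun the argument on $\mathbb{S}^n$ with the end compactifying at $q_0$, a case your proposal does not treat. Second, to invoke Theorem~\ref{harmonic on ALE} you must apply it to the factor on the $M$-side, $u_M$ with $\bar\phi^*\delta=u_M^{4/(n-2)}g$ and $L_gu_M=0$, and that theorem requires $u_M$ \emph{bounded} on the end; with your choice of basepoint $u_M$ is a global positive solution whose boundedness is not automatic (it needs an exterior Harnack or Martin-boundary type argument), whereas the paper's Green's-function choice makes it immediate. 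Third, your derivation of the leading behavior $u\sim a\,\operatorname{dist}(\cdot,p_k)^{2-n}$ is circular as phrased: B\^ocher-type reasoning for the positive superharmonic $u$ on the image presupposes that $\Omega_k$ is already a punctured neighborhood of $p_k$, which is precisely what you are trying to establish; the expansion must come from the $M$-side, after (a) and (b) are settled. Once these are patched, your endgame does close --- the flat diameters of $\bar\phi(\Sigma_r)$ shrink, the far frontier is a single point, a clopen-in-connected argument shows $\Omega_k$ contains a full punctured ball, and $\Gamma_k\cong\pi_1(\Omega_k)=\{e\}$ --- but as written the proposal asserts rather than proves the step it correctly identifies as the heart of the matter. (Your closing appeal to Liouville's Theorem~\ref{thm L} plays no actual role in this argument.)
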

\begin{proof}
    Without loss of generality, we may assume that there is only one ALE end of order $\tau >0$ with group $\Gamma$. Our goal is to construct totally umbilic $\mathbb{S}^{n-1}/\Gamma$ in the end when $r$ is very large, where $r$ is the distance function with respect to a point $z\in M$. Due to Theorem \ref{tus in Rn}, we can conclude that $\Gamma = \{e\}$. In general, producing totally umbilic submanifolds directly in general manifolds is very difficult. However, in our case, we can use the ALE geometry at infinity and conformal equivalence to produce such submanifolds.

    Note $\phi: M \to \mathbb{S}^n$ is a conformal map. Choosing a base point $o\in M$, we can do the stereographic projection based on $\phi(o)$. The resulting map: $\bar{\phi}:M\setminus\{o\} \to \mathbb{R}^n$ is also conformal with the pull-back metric: $\bar{\phi}^* g_{\mathbb{R}^n} = u^{\frac{4}{n-2}} g$. $u$ satisfies the equation:
    \begin{align*}
        -\Delta u + a(n) S u = 0,
    \end{align*}
    where $a(n) = \frac{n-2}{4(n-1)}$ and $S = S(g)\geq 0$ is the scalar curvature of $(M,g)$.

    Since $u$ is the conformal Green's function at point $o$, away from $o$, $u$ is bounded. Thus, applying Theorem \ref{harmonic on ALE}, on the ALE end, fix a $z\in M$, and $r = d(x,z)$, we have the asymptotic on the ALE end:
    \begin{align*}
        u = Ar^{2-n} + O(r^{2-n-\epsilon}),
    \end{align*}
    for some $\epsilon >0$. The constant term is 0 since it is a compactification conformal factor. Up to a scaling, we can assume $A = 1$.

    Next, we consider the annulus region 
    \begin{align*}
        A_g(R, 4R) = \{x:x\text{ in the end}, R\leq \operatorname{dist}(x,z)\leq 4R\}.
    \end{align*}
    According to the ALE assumption, when $R$ is sufficiently large, $A_g(R,4R) \simeq [R,4R]\times\mathbb{S}^{n-1}$ is connected to the metric $C^{1,\alpha}$ close to the flat metric. 

    From our conformal map, $u^{\frac{4}{n-2}} = (1+O(r^{-\epsilon}))r^{-4}$, the metric at the end $u^{\frac{4}{n-2}}g$ can be compactified by adding a point $p$. In fact, for some small $r>0$, $(B(p,r), u^{\frac{4}{n-2}} g)\subseteq (C(\mathbb{S}^{n-1}/\Gamma), g_{flat})$, the flat cone metric. Thus, the conformal annulus $(A_g(R,4R), u^{\frac{4}{n-2}}g)$ is embedded in the flat cone $C(\mathbb{S}^{n-1}/\Gamma)$.
     Notice that  a priori this embedding of $A_g(R,4R)$ can be pretty wild, and the image may not contain any totally umbilic submanifold.
     
     Denote $A(a,b)$ as the annulus region in the flat cone $C(\mathbb{S}^{n-1}/\Gamma)$ with respect to the distance function of the vertex.

    \textbf{Claim}: for $R$ sufficiently large, there is a region $A(2R,3R)\subseteq(C(\mathbb{S}^{n-1}/\Gamma), g_{flat})$, which can be conformally embedded in the region $(A_g(R,4R), g)$.

    





    \begin{proof}[\textbf{Proof of the Claim}]: Consider the conformal metric, $\hat{g} = r^{-4} g$, with the inverted coordinate: $x' = \frac{x}{r^2}$, where $r$ is the original distance function. Note, the distance to $p$, $\rho = |x'|$, is $r^{-1}$. In particular, $A_g(a,b) = A_{\hat{g}}(b^{-1}, a^{-1})$. 

        Since we consider $R$ large, it is enough to consider $((B(p,\delta), \hat{g})$, for some $\delta>0$ sufficiently small. Consider the flat conformal metric, 
        \begin{align*}
            g_0 = u^{\frac{4}{n-2}} g = (1+O(\rho^\epsilon))\hat{g}.
        \end{align*}
        Note, $(B(p,\delta), g_0)\subseteq ((C(\mathbb{S}^{n-1}/\Gamma), g_{flat})$, is part of the flat cone, with the vertex $p$. Then, we can compare two distance functions: for $\delta$ small, on $((B(p,\delta), \hat{g})$, we have the function $q(x) = \frac{\operatorname{dist}_{g_0}(x,p)}{\operatorname{dist_{\hat{g}}(x,p)}}$, which is continuous away from $p$, and $\lim\limits_{x\to p}q(x)=1$. Thus, there exists a function $\eta$, with $\lim\limits_{t\to0}\eta(t) = 0$, such that:
        \begin{align*}
            (1-\eta(\delta))\operatorname{dist}_{\hat{g}}(x,p)\leq\operatorname{dist}_{g_0}(x,p)\leq(1+\eta(\delta))\operatorname{dist}_{\hat{g}}(x,p),
        \end{align*}
        for any $x\in B(p,\delta)$. If $x\in A(2R,3R)$, then $\operatorname{dist}_{\hat{g}}(x,p) \leq (2R)^{-1}(1+\eta(\delta)) \leq R^{-1}$, and similarly, $\operatorname{dist}_{\hat{g}}(x,p) \geq (4R)^{-1}$. Since all of the metrics above are conformal, if we choose $\delta$ small enough, then the claim is proved.

        \end{proof}

        As shown in Figure \ref{fig:psi-map}, there exists a conformal map, $\psi$, from the ALE end $E$, to the flat cone. This $\psi$ is obtained by composing with an additional inversion map. Thus, we can embed the region $A_g(R,4R)$ in the flat cone. From the Claim, it will be close to $A(R,4R)$.




    On the flat metric of $A(2R,3R)\cong [2R,3R]\times \mathbb{S}^{n-1}/\Gamma$, write $([2R,3R]\times \mathbb{S}^{n-1}/\Gamma,dt^2+t^2 g_{\mathbb{S}^{n-1}/\Gamma})$.
  Then for the slice $\{\frac{5}{2}R\}\times\mathbb{S}^{n-1}/\Gamma\subseteq([R,3R]\times\mathbb{S}^{n-1}/\Gamma,\delta)$, the second fundamental form:
  \begin{align*}
        \text{II} = \mathcal{L}_{\partial t}(dt^2 + t^2 g_{\mathbb{S}^{n-1}/\Gamma}) = 2t g_{\mathbb{S}^{n-1}/\Gamma} = \frac{2}{t} g|_{\{\frac{5}{2}R\}\times\mathbb{S}^{n-1}/\Gamma}.
  \end{align*}
  
  Thus, $\{\frac{5}{2}R\}\times\mathbb{S}^{n-1}/\Gamma\cong\mathbb{S}^{n-1}/\Gamma$ is a totally umbilic submanifold. For $R$ large, $\{\frac{5}{2}R\}\times\mathbb{S}^{n-1}/\Gamma\subseteq(A(2R,3R),\delta)$, as a totally umbilic submanifold.
  
  Since the totally umbilic submanifold is invariant under conformal mapping, there exists $N\subset(M,g)$, $(N,g|_N)$ is $C^{1,\alpha}$ close to $(\{\frac{5}{2}R\}\times\mathbb{S}^{n-1}/\Gamma,g_{\mathbb{S}^{n-1}/\Gamma})$ and $N\cong\mathbb{S}^{n-1}/\Gamma_i$, which is totally umbilic. This violates Theorem \ref{thm SY}
  and Theorem \ref{tus in Rn} unless $\Gamma = \{e\}$. Thus, this end is an AE end. 

  We can repeat the process and conclude that $(M,g)$ contains only AE ends.

  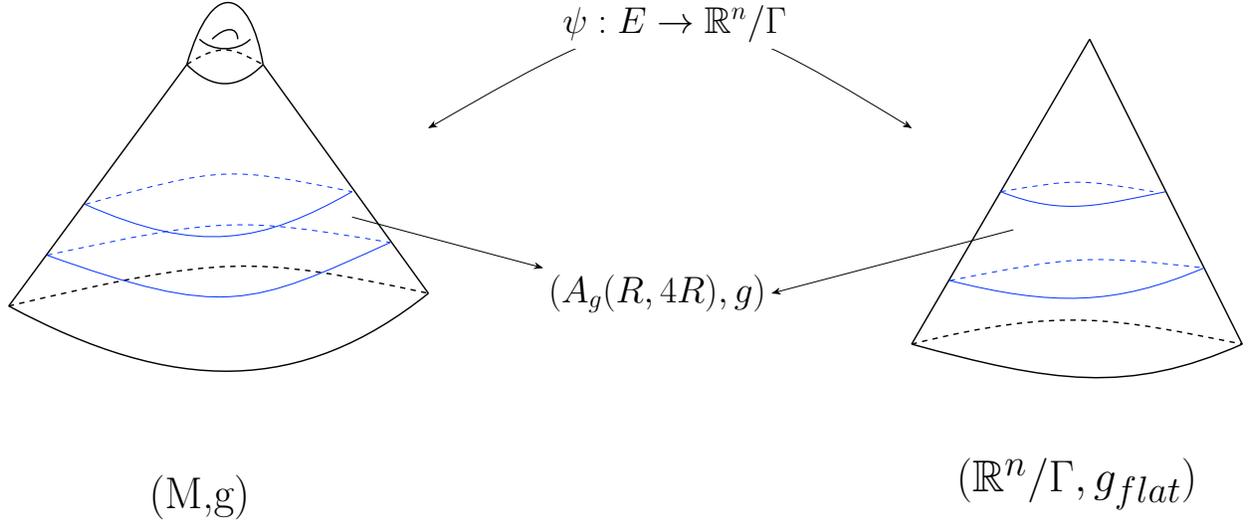
\begin{figure}[!ht]
     \centering
     \resizebox{1\textwidth}{!}{\begin{circuitikz}
\tikzstyle{every node}=[font=\LARGE]
\draw [line width=0.8pt, short] (5.5,15) -- (2,10.25);
\draw [line width=0.8pt, short] (7,15) -- (10.25,10.5);
\draw [line width=0.8pt, short] (2,10.25) .. controls (5.25,8.5) and (7.75,8.5) .. (10.25,10.5);
\draw [line width=0.8pt, dashed] (2,10.25) .. controls (6.25,11.25) and (6.75,11.25) .. (10.25,10.5);
\draw [line width=0.8pt, short] (5.5,15) .. controls (6,14.5) and (6.5,14.5) .. (7,15);
\draw [line width=0.8pt, dashed] (5.5,15) .. controls (6.25,15.5) and (6.5,15.25) .. (7,15);
\draw [line width=0.8pt, short] (5.5,15) .. controls (6,16.75) and (6.75,16.5) .. (7,15);
\draw [line width=0.8pt, short] (5.75,15.5) .. controls (6,15.25) and (6.5,15.25) .. (6.75,15.5);
\draw [line width=0.8pt, short] (6,15.5) .. controls (6.25,15.75) and (6.5,15.75) .. (6.5,15.5);
\draw [ color={rgb,255:red,4; green,51; blue,255}, short] (3.5,12.25) .. controls (5.75,11.25) and (7,11.5) .. (8.75,12.5);
\draw [ color={rgb,255:red,4; green,51; blue,255}, short] (2.75,11.25) .. controls (6.25,10) and (6.75,10.25) .. (9.5,11.5);
\draw [ color={rgb,255:red,4; green,51; blue,255}, dashed] (3.5,12.25) .. controls (6.25,13) and (6.25,13) .. (8.75,12.5);
\draw [ color={rgb,255:red,4; green,51; blue,255}, dashed] (2.75,11.25) .. controls (6.25,12) and (6.5,12) .. (9.5,11.5);
\node [font=\LARGE] at (14.75,10.5) {$(A_g(R,4R), g)$};
\draw [<->, >=Stealth] 
  (10.25,13.75) .. controls (15,16.5) and (15.25,16.5) .. (19.75,13.75)
  node[pos=0.5, fill=white]{$\psi: E \rightarrow \mathbb{R}^n/\Gamma$};
\draw [line width=0.8pt, short] (23.25,15.5) -- (19.75,9.5);
\draw [line width=0.8pt, short] (23.25,15.5) -- (26.25,9.5);
\draw [line width=0.8pt, short] (19.75,9.5) .. controls (22.5,8.75) and (24,8.5) .. (26.25,9.5);
\draw [line width=0.8pt, dashed] (19.75,9.5) .. controls (22.75,10.25) and (23.75,10) .. (26.25,9.5);
\draw [ color={rgb,255:red,4; green,51; blue,255}, short] (21.5,12.5) .. controls (22.75,12) and (23.5,12.25) .. (24.75,12.5);
\draw [ color={rgb,255:red,4; green,51; blue,255}, short] (20.5,10.75) .. controls (22.5,10.25) and (23.75,10.25) .. (25.5,11);
\draw [ color={rgb,255:red,4; green,51; blue,255}, dashed] (21.5,12.5) .. controls (23,12.75) and (23,12.75) .. (24.5,12.5);
\draw [ color={rgb,255:red,4; green,51; blue,255}, dashed] (20.5,10.75) .. controls (23,11.25) and (23.25,11.25) .. (25.5,11);
\draw [->, >=Stealth] (8.75,12) -- (12.5,11);
\draw [->, >=Stealth] (21.75,11.75) -- (17,10.5);
\node [font=\Huge] at (5.75,6.5) {(M,g)};
\node [font=\Huge] at (23,6.75) {$(\mathbb{R}^n/\Gamma, g_{flat})$};
\end{circuitikz}}
     \caption{Conformal mapping $\psi : E \to \mathbb{R}^n/\Gamma$, from the ALE end to the flat cone.}
     \label{fig:psi-map}
  \end{figure}
  \end{proof}

\subsection{The local isotropy group and developbility}\label{isotropy ALE section}
In addition, we can prove the following structure theorem for the universal cover of the LCF, ALE manifolds.
\begin{proposition}\label{ALE structure}
    Let $(M,g)$ be an LCF, ALE manifold with non-negative scalar curvature, then its universal cover $\tilde{M}$ is diffeomorphic to $\mathbb{S}^n\setminus\Lambda\cup I$, where $\Lambda$ is the limit set of $\pi_1(M)$ under the conformal diffeomorphism, and $I$ is a discrete set of points that correspond to the AE ends.
\end{proposition}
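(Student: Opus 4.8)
The plan is to reduce the non-compact statement to the already-understood compact orbifold case by conformally compactifying. First I would add one point $q_k$ at each ALE end $E_k$ to produce a compact LCF orbifold $M'$ with isolated singularities $\{q_1,\dots,q_N\}$ (with orbifold groups $\Gamma_k$) and positive orbifold Yamabe invariant \cite{TV2}, so that $M = M'\setminus\{q_1,\dots,q_N\}$. Passing to a Yamabe representative, $M'$ carries an LCF metric of positive scalar curvature, so Theorem \ref{good orbifold} applies and $M'$ is a good orbifold. Its orbifold universal cover $\tilde{M'}_{orb}$ is therefore a manifold, the developing map $\Phi:\tilde{M'}_{orb}\to\mathbb{S}^n$ is a conformal embedding (the good-orbifold version of Theorem \ref{thm SY}), and because $M'$ is compact the image is exactly the domain of discontinuity, $\Phi(\tilde{M'}_{orb})=\Omega=\mathbb{S}^n\setminus\Lambda$, with $\Lambda$ the limit set of $\rho(\pi_1^{orb}(M'))$, as recorded in the Kleinian discussion surrounding Theorem \ref{good orbifold}.

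Next I would relate the topological universal cover $\tilde M$ of $M$ to $\tilde{M'}_{orb}$. Let $\tilde\Sigma\subseteq\tilde{M'}_{orb}$ be the preimage of the singular set $\{q_k\}$; it is discrete, and since $M'$ is good the deck action of $\pi_1^{orb}(M')$ is free away from $\tilde\Sigma$. Because $\tilde{M'}_{orb}$ is a simply connected $n$-manifold with $n\geq 3$, deleting the codimension-$n$ set $\tilde\Sigma$ leaves it simply connected, and the now-free quotient $(\tilde{M'}_{orb}\setminus\tilde\Sigma)/\pi_1^{orb}(M')$ is exactly $M'\setminus\{q_k\}=M$. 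Hence $\tilde{M'}_{orb}\setminus\tilde\Sigma$ is the universal cover of $M$, giving a canonical identification $\tilde M\cong\tilde{M'}_{orb}\setminus\tilde\Sigma$ together with $\pi_1(M)\cong\pi_1^{orb}(M')$; in particular the two holonomy representations, and hence the two limit sets $\Lambda$, coincide.

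Finally I would push this through $\Phi$. Restricting the embedding to $\tilde M\cong\tilde{M'}_{orb}\setminus\tilde\Sigma$ yields a conformal embedding onto $\Omega\setminus I$, where $I:=\Phi(\tilde\Sigma)$. Since $\tilde\Sigma$ is discrete and $\Phi$ is an embedding, $I$ is a discrete subset of $\Omega$, so $\tilde M\cong\Omega\setminus I=\mathbb{S}^n\setminus(\Lambda\cup I)$, which is the claim. To match the statement that $I$ corresponds to the AE ends: each point of $\tilde\Sigma$ is a smooth point of the manifold $\tilde{M'}_{orb}$, so removing it opens a punctured-ball neighborhood, i.e.\ an end of $\tilde M$; and by Theorem \ref{group injective} the inclusion $\Gamma_k\hookrightarrow\pi_1(M)$ is injective, so each lift of an ALE end $E_k$ is its own universal cover $\mathbb{R}^+\times\mathbb{S}^{n-1}$, an AE end. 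Thus the points of $I$ are precisely the conformal compactification points of the AE ends of $\tilde M$.

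I expect the main obstacle to be the careful bookkeeping in the middle step: establishing $\pi_1(M)\cong\pi_1^{orb}(M')$ and $\tilde M\cong\tilde{M'}_{orb}\setminus\tilde\Sigma$ rigorously, since this mixes the orbifold fundamental group with the ordinary one and relies on the isotropy of the deck action being concentrated exactly on $\tilde\Sigma$. The other delicate input is the surjectivity of the developing map onto $\Omega$; I sidestep proving completeness for the non-compact $\tilde M$ directly by importing it from the compact orbifold $M'$, where $\Phi(\tilde{M'}_{orb})=\Omega$ is already known.
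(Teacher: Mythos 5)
Your proposal is circular within the logical architecture of this paper. You deduce the proposition from Theorem \ref{good orbifold} (to know $M'$ is a good orbifold, hence developable) and from Theorem \ref{group injective} (to identify the points of $I$ with AE ends), but both of these results sit \emph{downstream} of Proposition \ref{ALE structure}: the paper's proof of Theorem \ref{good orbifold} proceeds by conformally blowing up the orbifold at its singular points and then invoking precisely Proposition \ref{ALE structure} and Corollary \ref{cor 1} to conclude that each isotropy group injects into $\pi_1(M\setminus\{x_i\})$; and Theorem \ref{group injective} is proved by lifting the distance spheres $S_r$ to $\tilde{M}$ and using that the lift lies in an \emph{AE} end of $\tilde{M}$ --- a fact that comes from the umbilic-sphere theorem and Proposition \ref{ALE structure} themselves. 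The author flags exactly this dependency in the remark following the proof of Theorem \ref{good orbifold}: the order of the two results could be reversed only if one first establishes an orbifold version of the Schoen--Yau embedding theorem (Theorem \ref{thm SY}) independently. Your step ``the developing map $\Phi:\tilde{M'}_{orb}\to\mathbb{S}^n$ is a conformal embedding (the good-orbifold version of Theorem \ref{thm SY})'' assumes that missing ingredient rather than proving it, so the argument has no valid starting point inside this paper. (Citing \cite{TV2} for the compactification itself is legitimate as an external input, but Theorem \ref{good orbifold} is a new result of this paper with no proof independent of the proposition you are trying to establish.)

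The paper's actual route avoids this entirely by working only with the \emph{manifold} $\tilde{M}$, where Theorem \ref{thm SY} applies directly: the developing map shows every end of $\tilde{M}$ is AE (via the totally umbilic $\mathbb{S}^{n-1}/\Gamma$ in the flat cone, Theorem \ref{tus in Rn}); each AE end is then compactified by a single point, either through the expansion of Theorem \ref{harmonic on ALE} or through the exhaustion by umbilic spheres $N_k$ whose spherical radii $\operatorname{rad}(S_k)\to 0$ by Theorem 2.7 of \cite{SYconformal}; Van Kampen gives simple connectivity of the compactification $\overline{M}$, the deck action extends, and one checks $x_i\notin\Lambda$ since an isolated limit point would be the attracting point of some $\gamma^n$, contradicting that $x_i$ compactifies an AE end. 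If you want to salvage your top-down strategy, you would need to supply an independent proof of developability of the compactified orbifold (equivalently, an orbifold Schoen--Yau theorem); your intermediate bookkeeping --- $\tilde{M}\cong\tilde{M'}_{orb}\setminus\tilde{\Sigma}$, simple connectivity after deleting a discrete set when $n\geq 3$, and the identification of $I$ with the punctures --- is fine once that foundation exists, but as written the foundation is borrowed from the conclusion.
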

\begin{proof}
    Now, $(M,g)$ is an ALE manifold. By the previous argument, since there is a conformal developing map $\phi:\tilde{M}\to \mathbb{S}^n$, the only ALE ends in $\tilde{M}$ are AE ends, possibly infinitely (countably) many of them.

    One way to see that each AE end can be compactified by adding a point is to use the conformal embedding $\bar{\phi}:\tilde{M}\to\mathbb{R}^n$ and to use the expansion of the harmonic function on the end of the AE. Note that in this case, our group $\Gamma$ is trivial, thus, we can compactify it by adding a manifold point.

    Another way to see this is to use the exhaustion of totally umbilic manifolds on the end: As before, consider the $i$-th end. For $r$ large enough, instead of $A$, we consider a family of annuli:
  \begin{align*}
      A_k=\{x:x\text{ in the $i$-th end }, kr\leq \operatorname{dist}(x,z)\leq (k+1)r\}.
  \end{align*}
  
  Repeat the above argument, $\exists N_k\subset A_k$, totally umbilic, and $N_k\cong\mathbb{S}^{n-1}$. Moreover, let $C_k\subset \tilde{M}$ be a compact subset such that $\partial C_k=N_k$. Then we have $C_1\subset C_2\subset C_3...\subset C_k\subset C_{k+1}\subset...$. Moreover, $\tilde{M}\supset \cup_{k=1}^\infty C_k$, $\{C_k\}$ is an exhaustion of the $i$-th end. 
  
  Now, under the conformal map $\phi:\tilde{M}\to\mathbb{S}^n$, $\phi(N_k)=S_k\cong\mathbb{S}^{n-1}$, $S_i\bigcap S_j=\emptyset$ for $i\neq j$, $\phi(C_k)\subset\phi(C_{k+1})$, we see that $\operatorname{rad}(S_k)$ is decreasing. By Theorem 2.7 in \cite{SYconformal}, $\lim\limits_{k\to\infty}\operatorname{rad}(S_k)=0$, otherwise, the Hausdorff dimension of $\mathbb{S}^n\setminus\phi(M)$ is $n$. Thus, $\lim\limits_{k\to\infty}S_k=\{x_i\}$. Thus, we have a one-point compactification for each end.

  Let the compactified universal cover
  \begin{align*}
      \overline{M}=\phi(\tilde{M})\cup_i \{x_i\},
  \end{align*}
  with the metric to be the spherical metric $h$. By Van Kampen's theorem and $n\geq 3$, $\overline{M}$ is simply connected. The deck transformation can be extended by the effective action on $\overline{M}$ by maps $x_i$ to some $x_j$ with the only possible fixed point being $\{x_i\}$. Then $\overline{M}\cong\mathbb{S}^n\setminus\Lambda$, where $\Lambda$ is the limit set of deck transformations on $\tilde{M}$. This is because the local isometry group is finite. 
  
  For each $x_i$, if $x_i\in \Lambda$, it is an isolated limit point. Then $\exists \gamma\in\pi_1(M)=\operatorname{Deck}(\overline{M})$ and $x\in \phi(\tilde{M})$ such that $\lim\limits_{n\to\infty}\gamma^n(x)=x_i$. Then $x_i$ will not correspond to an AE end, which is a contradiction. Thus $x_i\notin\Lambda$. Thus, $\tilde{M}\cong \mathbb{S}^n\setminus\Lambda\cup I$, where $I$ is a discrete set of points.
\end{proof}

Now we can prove Theorem \ref{group injective}. We can show that for each $\Gamma_k$, there is a group homomorphism $i_k:\Gamma_k\to\pi_1(M)$, which is an injective map.

\begin{proof}[Proof of Theorem \ref{group injective}]
    Consider 
    \begin{align*}
      S_r=\{x:x\text{ in the $k$-th end },\operatorname{dist}(x,z)=r\}  
    \end{align*}
    to be the distance sphere. Then, for $r$ large enough, $(S_r,j^*g)$ and $(\mathbb{S}^{n-1}/\Gamma_k,g_{\mathbb{S}^n/\Gamma_k})$ are $C^{1,\alpha}$ close, and $S_r\cong \mathbb{S}^{n-1}/\Gamma_k$. ($j^*g$ is the pullback metric under $j:S_r\to M$.)
  
  Let $(\tilde{M},p^*g)$ be the Riemannian cover with the covering map: $p:\tilde{M}\to M$. Then choose $z_0\in p^{-1}(z)$, there is a lift of $S_r$, $\tilde{S_r}$, by lifting the geodesic. Then $\tilde{S_r}$ is in an AE end. For $r$ large enough, $\tilde{S_r}\cong \mathbb{S}^n$. Thus, we have the map $p:\tilde{S_r}\to S_r$, which is a covering map. Since $\pi_1(\tilde{S_r})=\{e\}$, then $\tilde{S_r}$ is the universal cover of $S_r$.
  
  Now, let 
  \begin{align*}
      j_k:\mathbb{R}^+\times\mathbb{S}^{n-1}/\Gamma_k\to M
  \end{align*}
  be the embedding. This induces the homomorphism
  \begin{align*}
      i_k:\pi_1(\mathbb{R}^+\times\mathbb{S}^{n-1}/\Gamma_k)\cong \Gamma_k\to \pi_1(M).
  \end{align*}
  
  Let $\gamma\subset \mathbb{R}^+\times\mathbb{S}^{n-1}/\Gamma_k$, a closed loop such that $[\gamma]\in\operatorname{ker}(i_k)\lhd\pi_1(\mathbb{R}^+\times\mathbb{S}^{n-1}/\Gamma_k)$. Note that $\gamma$ is homotopically equivalent to $\gamma\in S_r$, with $[\gamma]\in \pi_1(S_r)\cong \Gamma_k$. Since $i_k([\gamma])$ is trivial, then $\tilde{\gamma}\subset \tilde{S_r}$, the lift of $\gamma$, is a closed loop, which represents the trivial deck transformation of $\pi_1(M)$. Since $\pi_1(\tilde{S_r})=\{e\}$, then $[\gamma]$ represents the trivial deck transformation in $\pi_1(S_r)$, thus, $[\gamma]=e\in\pi_1(S_r)=\Gamma_k$. Thus, $i_k$ is injective. 
\end{proof}
In particular, for each $\gamma\in\Gamma_k< \pi_1(M)$, as a deck transformation, it fixes $\{x_i\}$. And it induces a map on a small ball $B_\epsilon$ centered on $x_i$. Then from the Liouville theorem, we immediately have:

\begin{corollary}\label{cor 1}
  For each $\Gamma_k<\pi_1(M)$, $\rho(\Gamma_k)<C(n)$ which fixes $\{x_k\}$, where $C(n)$ is the conformal group of $\mathbb{S}^n$.
\end{corollary}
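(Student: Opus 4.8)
The plan is to prove Corollary \ref{cor 1} as an essentially immediate consequence of the structural picture already assembled in Proposition \ref{ALE structure}, Theorem \ref{group injective}, and the Liouville theorem (Theorem \ref{thm L}). The point is that every element of $\Gamma_k$, viewed via the injection ${i_k}_*$ inside $\pi_1(M)$, acts as a deck transformation on the universal cover $\tilde{M}$, which by Proposition \ref{ALE structure} is conformally identified with $\mathbb{S}^n\setminus(\Lambda\cup I)$ via the developing map $\phi$. I want to show this action, transported through $\phi$ to $\mathbb{S}^n$, (a) extends to a genuine conformal automorphism of all of $\mathbb{S}^n$, i.e. lands in $C(n)$, and (b) fixes the compactification point $x_k$ corresponding to the $k$-th AE end.

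First I would recall from the proof of Proposition \ref{ALE structure} that each AE end of $\tilde{M}$ is compactified by a single point $x_k\in\mathbb{S}^n\setminus\Lambda$, obtained as the limit of the shrinking round spheres $S_j=\phi(N_j)$. The subgroup $\rho({i_k}_*\Gamma_k)$ arises precisely as the deck transformations generated by the loops around the $k$-th end; geometrically these are the transformations that permute the preimages of the $k$-th end and stabilize the particular lift sitting over $x_k$. Since by Theorem \ref{group injective} the map ${i_k}_*$ is injective, $\rho(\Gamma_k)$ is a faithful copy of $\Gamma_k$ inside the deck group, and each of its elements is a conformal self-map of $\tilde{M}\cong\mathbb{S}^n\setminus(\Lambda\cup I)$ fixing the boundary puncture $x_k$ of that chosen end.

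Next I would invoke the Liouville theorem. A nontrivial element $g=\rho(\gamma)$ for $\gamma\in\Gamma_k$ restricts, on a small deleted ball $B_\epsilon\setminus\{x_k\}$ around $x_k$, to a conformal map of an open connected subset of $\mathbb{S}^n$ into $\mathbb{S}^n$. By Theorem \ref{thm L} (since $n\geq 3$) this restriction extends uniquely to a global conformal automorphism $\bar g\in C(n)$ of $\mathbb{S}^n$. Because the deck action already agreed with $\bar g$ on the open set $B_\epsilon\setminus\{x_k\}$ and both are continuous, $\bar g$ is the unique conformal extension of the deck transformation; continuity of $\bar g$ at $x_k$ then forces $\bar g(x_k)=\lim_{x\to x_k}\bar g(x)=x_k$, since the $\Gamma_k$-action on the end stabilizes $x_k$ (the spheres $S_j$ converging to $x_k$ are each $g$-invariant as they are the distance spheres of the stabilized end). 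Thus $\rho(\Gamma_k)<C(n)$ and fixes $\{x_k\}$, giving the corollary.

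I do not expect a serious obstacle here, as the corollary is genuinely a clean corollary: all the analytic work (totally umbilic exhaustion, harmonic expansion, one-point compactification, injectivity) has been done upstream. The only point requiring a little care is verifying that the deck transformations associated to $\Gamma_k$ genuinely fix the single point $x_k$ rather than permuting several compactification points—this is where I would lean on the fact, established in the proof of Proposition \ref{ALE structure}, that the loops generating $i_k(\Gamma_k)$ are supported in the $k$-th end and hence act on its compactification $S^{n-1}/\Gamma_k$ cone with the cone vertex $x_k$ as the unique fixed point. Once the fixed point is correctly identified, the Liouville extension is purely formal.
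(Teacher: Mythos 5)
Your proposal is correct and follows essentially the same route as the paper: the paper likewise observes that each $\gamma\in\Gamma_k$, viewed via the injection of Theorem \ref{group injective} as a deck transformation, fixes the compactification point $x_k$ from Proposition \ref{ALE structure} and induces a conformal map on a small ball around $x_k$, which extends to an element of $C(n)$ by the Liouville theorem (Theorem \ref{thm L}). Your extra care in checking that the deck transformations stabilize the single lifted end (via the invariant distance spheres converging to $x_k$) is a useful elaboration of a step the paper leaves implicit, but it is the same argument.
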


Note that the action $\rho(\pi_1(M))$ extends to the compactification $\overline{M}$. Moreover, this action is properly discontinuous. Thus $M':=\overline{M}/\rho(\pi_1(M))$, is a compact orbifold (\cite{Thurston_orbifold}).
In particular, 
\begin{align*}
    M'=M\cup\{x_1,...,x_m\},
\end{align*}
which is the compactification of $M$. Since it is a quotient of a manifold $\bar{M}$, $M'$ is a good orbifold, with each local group $\Gamma_i$ injecting into $\pi_1(M)$. 

Note that we can also define a \textbf{Kleinian orbifold} (See Section \ref{s:lcf and Kleinian} for Kleinian manifolds).
Thus, $M'$ is a Kleinian orbifold.

\begin{proposition}\label{manifold cover}
    Let $M'$ be a good Kleinian orbifold. Then there exists a compact Kleinian manifold $N$, such that:
    \begin{align*}
        p: N\to M'
    \end{align*}
    is a finite covering map.
\end{proposition}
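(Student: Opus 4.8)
The plan is to produce $N$ as a quotient of the domain of discontinuity by a torsion-free finite-index subgroup obtained from Selberg's lemma. Since $M'$ is a good Kleinian orbifold, I would first write $M' = \Omega/\Gamma$, where $\Omega \subseteq \mathbb{S}^n$ is the domain of discontinuity and $\Gamma = \rho(\pi_1^{orb}(M')) \subseteq C(n) \cong \operatorname{Isom}(\mathbb{H}^{n+1}) \leq \mathrm{O}(n+1,1)$ acts properly discontinuously on $\Omega$. The two facts I would record at this stage are that $\Gamma$ is a subgroup of the linear group $\mathrm{O}(n+1,1)$ over $\mathbb{R}$ (characteristic $0$), and that $\Gamma$ is finitely generated, which follows from the compactness of $M'$ (a compact orbifold has finitely generated orbifold fundamental group).

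With these in hand, I would apply Selberg's lemma, Theorem \ref{theorem.selberg}, in the normal form recorded after it: there is a normal, finite-index, torsion-free subgroup $H \trianglelefteq \Gamma$. Set $N := \Omega/H$. The key claim is that $N$ is a manifold, i.e.\ that $H$ acts freely on $\Omega$. Since $H \leq \Gamma$, the $H$-action on $\Omega$ is still properly discontinuous; so it remains to rule out nontrivial elements with fixed points. Here I would invoke Remark \ref{elliptic}: any element of $C(n)$ of infinite order acts without fixed points on $\Omega$, so an element of $H$ fixing a point of $\Omega$ must have finite order, hence be trivial since $H$ is torsion-free. Thus $H$ acts freely and properly discontinuously, $\Omega \to \Omega/H = N$ is a genuine covering, and $N$ is a Kleinian manifold.

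Finally, the natural quotient map $p: N = \Omega/H \to \Omega/\Gamma = M'$ is an orbifold covering of degree $[\Gamma:H] < \infty$, which is the asserted finite manifold cover. Compactness of $N$ follows because $\Gamma$ acts cocompactly on $\Omega$ (as $M'$ is compact) and $H$ has finite index, so $H$ is cocompact as well; equivalently, $N$ is a finite-sheeted cover of the compact space $M'$. I expect the only genuinely delicate point to be the freeness of the $H$-action: everything reduces to the dichotomy of Remark \ref{elliptic}, that the only elements of $C(n)$ which can fix a point of the domain of discontinuity are the finite-order (elliptic) ones, which torsion-freeness then eliminates. The rest is a packaging of Selberg's lemma together with the standard fact that a free, properly discontinuous action yields a manifold quotient.
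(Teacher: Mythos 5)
Your proof is correct and takes essentially the same route as the paper's: write $M' = \Omega/\Gamma$, invoke Selberg's lemma (Theorem \ref{theorem.selberg}) to obtain a normal, finite-index, torsion-free subgroup $H \trianglelefteq \Gamma$, and use the observation of Remark \ref{elliptic} that torsion-free elements of $C(n)$ act without fixed points on $\Omega$ to conclude that $N = \Omega/H$ is a compact Kleinian manifold covering $M'$ finitely. You additionally spell out two details the paper leaves implicit --- the finite generation of $\Gamma$ needed for Selberg's lemma (from compactness of $M'$) and the cocompactness argument for $N$ --- both of which are correct.
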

\begin{proof}
    Since $M = \Omega/\Gamma$, for some open set $\Omega\subseteq \mathbb{S}^n$, and $\Gamma\leq C(n)$, by Theorem \ref{theorem.selberg}, there exists a normal subgroup $H\leq \Gamma$ of finite-index and torsion-free. Note that the torsion-free action in $C(n)$ is fixed-point-free (Remark \ref{elliptic}). Thus, the resulting normal cover space $N\cong \Omega/H$ a compact manifold, whose fundamental group is isomorphic to $H$, and the deck transformation group is isomorphic to $\Gamma/H$.
\end{proof}

\subsection{The connection with the orbifolds}\label{good orbifolds proof}

Now, we link the geometry of LCF orbifolds with positive scalar curvature with  ALE manifolds of nonnegative scalar curvature. In the following, we show that the conformal compactification of the nonnegtative scalar curvature, LCF, ALE manifolds is a compact LCF orbifold with positive Yamabe invariant. 

\begin{theorem}[Conformal compactification with positive Yamabe invariant]\label{positive yamabe}If $(M,g)$ is a LCF,
nonnegative scalar curvature ALE manifold. 
The compactified ALE manifold $M'$ has positive Yamabe invariant, that is, $Y_{orb}(M')>0$.
    
\end{theorem}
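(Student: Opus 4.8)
The plan is to exhibit a single conformal class on $M'$ with positive orbifold Yamabe constant, since $Y_{orb}(M')$ is the supremum of $Y_{orb}(M',[\cdot])$ over conformal classes. By Proposition~\ref{ALE structure} and the AE-ends result, each end of $M$ compactifies to an isolated orbifold point $x_i$ by a conformal factor behaving like $r^{2-n}$ (the inverted coordinate turns $r^{-4}g$ into a flat cone $\mathbb{R}^n/\Gamma_i$ near $x_i$). This produces a smooth orbifold metric $g'$ on $M' = M\cup\{x_1,\dots,x_m\}$ with $g' = w^{\frac{4}{n-2}}g$ on $M$, where $w>0$ and $w\sim r^{2-n}$ at each end. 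I would then prove $Y_{orb}(M',[g'])>0$ by combining the conformal invariance of the Yamabe energy with the Sobolev inequality on the ALE manifold $M$.

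The first step is the covariance reduction. Writing $c_n=\frac{4(n-1)}{n-2}$, so that $c_n\,a(n)=1$, the conformal Laplacian satisfies $L_{g'}\phi = w^{-\frac{n+2}{n-2}}L_g(w\phi)$. Hence for a test function $\phi$ on $M'$, setting $\psi = w\phi$,
\[
\int_{M'}\bigl(c_n|\nabla\phi|_{g'}^2 + S_{g'}\phi^2\bigr)\,dvol_{g'}
= c_n\int_{M'}\phi\,L_{g'}\phi\,dvol_{g'}
= c_n\int_{M}\psi\,L_g\psi\,dvol_g,
\]
and likewise $\int_{M'}|\phi|^{\frac{2n}{n-2}}dvol_{g'} = \int_M|\psi|^{\frac{2n}{n-2}}dvol_g$ because $dvol_{g'}=w^{\frac{2n}{n-2}}dvol_g$. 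The point $x_i$ has zero $W^{1,2}$-capacity for $n\geq 3$, so functions supported in $M=M'\setminus\{x_i\}$ are dense for the Yamabe quotient; moreover $\psi=w\phi\sim r^{2-n}\phi(x_i)$ has finite energy at each end, so no boundary term arises and the infimum defining $Y_{orb}(M',[g'])$ may be computed entirely through $\psi$ on $M$.

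For the sign, since $S_g\geq 0$ I get $\int_M\psi L_g\psi\,dvol_g = \int_M(|\nabla\psi|^2+a(n)S_g\psi^2)\,dvol_g \geq \int_M|\nabla\psi|^2\,dvol_g\geq 0$, which already yields $Y_{orb}(M',[g'])\geq 0$. To upgrade this to strict positivity I invoke the scale-invariant Sobolev inequality $\bigl(\int_M|\psi|^{\frac{2n}{n-2}}dvol_g\bigr)^{\frac{n-2}{n}}\leq C_S\int_M|\nabla\psi|^2\,dvol_g$, valid on $M$ because it is asymptotically Euclidean with Euclidean volume growth. Combining the two displays,
\[
\frac{\int_{M'}\bigl(c_n|\nabla\phi|_{g'}^2+S_{g'}\phi^2\bigr)dvol_{g'}}{\bigl(\int_{M'}|\phi|^{\frac{2n}{n-2}}dvol_{g'}\bigr)^{\frac{n-2}{n}}}
\;\geq\; \frac{c_n\int_M|\nabla\psi|^2\,dvol_g}{C_S\int_M|\nabla\psi|^2\,dvol_g}\;=\;\frac{c_n}{C_S}\;>\;0,
\]
uniformly in $\phi$, so $Y_{orb}(M')\geq Y_{orb}(M',[g'])\geq c_n/C_S>0$, as claimed.

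I expect the main obstacle to be the two technical justifications that make the covariance identity legitimate across the added points: first, that the singular conformal factor $w\sim r^{2-n}$ sends admissible test functions on the compact orbifold $M'$ to finite-energy functions on $M$ with no boundary contribution at $x_i$ (handled by the zero-capacity/removable-singularity observation in dimension $n\geq 3$); and second, establishing the gradient-only Sobolev inequality on $M$, which is what forces \emph{strict} positivity rather than mere nonnegativity and is the only place the ALE geometry (as opposed to just $S_g\geq 0$) enters. The nonnegativity part is essentially formal once the covariance identity is in place; the strictness is where the argument has real content, and I would pin it down via the isoperimetric/Sobolev inequality on ALE ends together with compactness of the interior.
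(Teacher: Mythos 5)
Your proof is correct, but it takes a genuinely different route from the paper's. Both arguments start from the same conformal compactification to a smooth orbifold metric $g'$ on $M'=M\cup\{x_1,\dots,x_m\}$ with conformal factor $w\sim r^{2-n}$ — note, however, that in the paper this step is itself the first half of the proof of Theorem~\ref{positive yamabe}: mere ALE decay only makes $\hat g=r^{-4}g$ a $C^0$ cone metric in the inverted chart, and smoothness at $x_i$ requires the removable-singularity argument via the injection $\Gamma_i\hookrightarrow\pi_1(M)$ and the conformal equation for the factor, so if you import it from Proposition~\ref{ALE structure} you are implicitly using the LCF hypothesis there. After compactifying, the paper never estimates the Yamabe functional: it invokes Lemma 3.4 of \cite{AB04} to choose $g''\in[g']$ with sign-definite scalar curvature, writes the ALE metric as $g=v^{\frac{4}{n-2}}g''$ with $v\to\infty$ at the added points, and evaluates $L''v=S(g)v^{\frac{n+2}{n-2}}\geq 0$ at an interior minimum of $v$ to conclude $S(g'')>0$, whence $Y_{orb}(M')>0$ because the conformal class contains a positive-scalar-curvature representative. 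You instead bound the functional directly, via the conformal covariance identity $L_{g'}\phi=w^{-\frac{n+2}{n-2}}L_g(w\phi)$, the zero capacity of points for $n\geq 3$, and the gradient-only Sobolev inequality on the ALE manifold, obtaining the quantitative bound $Y_{orb}(M')\geq c_n/C_S$. The trade-off: the paper's route is analytically softer — it needs only the compact-orbifold sign trichotomy and the maximum principle, and no global Sobolev inequality on the noncompact $M$ — while yours avoids \cite{AB04} entirely, makes transparent exactly where $S_g\geq 0$ and the ALE geometry enter, and yields an explicit lower bound; the price is that you must actually establish $\bigl(\int_M|\psi|^{\frac{2n}{n-2}}\bigr)^{\frac{n-2}{n}}\leq C_S\int_M|\nabla\psi|^2$ on $M$, which you correctly flag as the step with real content (it is standard for AE/ALE manifolds: Sobolev on each end from uniform closeness to the flat cone, plus a Hardy-type inequality $\int_M\psi^2 r^{-2}\leq C\int_M|\nabla\psi|^2$ to absorb the cutoff terms from the compact core, as in Bartnik's weighted-space theory). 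Your capacity/density treatment of the points $x_i$ and the covariance bookkeeping with $\psi=w\phi$ and $dvol_{g'}=w^{\frac{2n}{n-2}}dvol_g$ are both correct, and the strict positivity follows as you say since nonzero compactly supported $\psi$ cannot have $\nabla\psi\equiv 0$.
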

\begin{proof}
    Without loss of generality, we assume that $(M,g)$ has only one ALE end. First, we need a well-defined orbifold metric. Fix $z\in M$, let $r(x)=\operatorname{dist}(x,z)$ be the distance function of $z$. Consider the metric:
\begin{align*}
    \hat{g}=r^{-4}g.
\end{align*}
In the inverted coordinate, let $\rho=\frac{1}{r}$, then by the ALE assumption, on the ball $B(x_0,\delta):=\{x:\rho(x)\leq \delta\}$, where $x_0$ is an orbifold point and $\delta$ is sufficiently small, we have 
\begin{align*}
    \hat{g}=\delta+O(\rho^{\tau}).
\end{align*}
Note that $B(x_0,\delta)\cong B_{\mathbb{R}^n}(0,1)/\Gamma$ is an orbifold with an isolated orbifold point $0$.

Second, we want a removable singularity argument.  We take the universal cover $P: B_{\mathbb{R}^n}(0,\delta) \setminus \{0\}\to B(x_0,\delta)\setminus\{x_0\}$, and the $\Gamma$-invariant, pull-back metric $(B_{\mathbb{R}^n}(0,\delta)\setminus\{0\}, P^* \hat{g})$. There are many ways to prove this. We will prove the removable singularity theorem using injection of the local group into $\pi_1(M)$, and conformal embedding. 
Consider the universal cover: $\pi: \Tilde{M}\to M$. Since $\Gamma$ injects into $\pi_1(M)$, there exists a conformal diffeomorphism $\Phi:B_{\mathbb{R}^n}(0,\delta)\setminus\{0\}\to B_{\mathbb{R}^n}(0,\delta)\setminus\{0\}$ such that $\Phi^* P^* \hat{g}=u^{\frac{4}{n-2}}g_{\mathbb{R}^n}$, hence it extends smoothly at the origin by a smooth function $u\in C^\infty(B_{\mathbb{R}^n}(0,\delta))$ since $u$ satisfies the conformal equation $-\Delta u + S(\hat{g}) u = 0$. Thus, it follows from the removability of the singularity of $u$. We denote the compactified metric $(M',g')$.

By lemma 3.4 of \cite{AB04}, on the orbifold $(M',g')$ there exists a conformal factor such that the conformal metric $g''\in[g']$ has the scalar curvature $S(g'')$ which does not change sign. To get the ALE metric $g$ on $M$, it is equivalent to finding a conformal factor $v$ such that:
\begin{align*}
    L'' v=-\Delta_{g''} v+ a(n) S(g'')v=S(g) v^{\frac{n+2}{n-2}}\geq 0
\end{align*}
on $M\subset M'$. Since $v>0$, $M'$ is compact, $v(x)\to +\infty$ as $x\to x_\infty$, there exists $x_0\in M$ such $v(x_0)=\min_{x\in M'} v(x)=\min_{x\in M} v(x)$. At that point, $-\Delta_{g''}v(x_0)\leq0$, and thus $S(g'')(x_0)\geq0$. Since $S(g'')$ does not change sign, by strong maximal principle, $S(g'')>0$. Thus, the Yamabe invariant $Y_{orb}(M')>0$. 

\end{proof}

As a corollary, for every such LCF, ALE manifold, we have that such an ALE metric is equivalent to the conformal Green's function metric on a compact LCF orbifold with positive Yamabe invariant. The conformal factors are the superposition of conformal Green's functions by Bôcher's theorem. Thus, we have the optimal decay rate.

\begin{corollary}\label{order of decay}
    Let $(M,g)$ be an LCF, scalar-flat, ALE manifold of order $\tau$, for any $\tau >0$, then there exist charts at infinity for every end such that:
    \begin{align*}
    &g=\delta+O(r^{-n+2})\\
    &\partial^m g=O(r^{-n+2-|m|}).
    \end{align*}
    i.e. they are ALE of order $n-2$. If $(M,g)$ is an LCF, nonnegative scalar curvature, ALE manifold, then there exists a conformal metric such that it is scalar-flat, ALE manifold of order $n-2$.
\end{corollary}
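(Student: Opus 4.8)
The plan is to reduce the decay estimate to an analysis of the conformal factor that relates the ALE metric to its conformal compactification, and then to use local conformal flatness to turn that factor into a genuinely harmonic function on flat space, where Bôcher's theorem is exact. By Theorem \ref{positive yamabe} the scalar-flat LCF ALE manifold $(M,g)$ compactifies to a compact LCF orbifold $(M',g')$ with $Y_{orb}(M')>0$, where $M'=M\cup\{x_1,\dots,x_m\}$ and the $x_i$ correspond to the ends. Writing $g=u^{\frac{4}{n-2}}g'$, the scalar-flatness $S(g)=0$ forces $u$ to satisfy the homogeneous conformal Laplace equation $L_{g'}u=0$ on $M=M'\setminus\{x_i\}$, with $u\to+\infty$ at each $x_i$.

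First I would show that $u$ is \emph{exactly} a superposition of conformal Green's functions. Since $Y_{orb}(M')>0$, the first eigenvalue of $L_{g'}$ is positive, so $L_{g'}$ is invertible on the compact orbifold and admits no nonzero global solution. Subtracting off the leading singular parts, set $\tilde u=u-\sum_i a_i G_{g'}(\cdot,x_i)$ with $a_i$ chosen so that $\tilde u$ is bounded near each $x_i$ (a Bôcher-type removable singularity statement); then $\tilde u$ extends to a global solution of $L_{g'}\tilde u=0$, whence $\tilde u\equiv 0$ and $u=\sum_i a_i G_{g'}(\cdot,x_i)$.

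The crucial step uses the LCF hypothesis. Near a fixed $x_i$ choose conformally flat coordinates centered at $x_i$, so $g'=\Phi^{\frac{4}{n-2}}\delta$ for a smooth positive $\Phi$. By conformal covariance of the conformal Laplacian, $w:=\Phi u$ satisfies $\Delta_\delta w=0$ on the punctured ball, and $g=u^{\frac{4}{n-2}}g'=w^{\frac{4}{n-2}}\delta$. The classical Bôcher theorem then gives $w=a\,c_n|x|^{2-n}+h(x)$ with $h$ harmonic and smooth at $x_i$, so $h(x)=m_0+O(|x|)$; note the leading singular term $a\,c_n|x|^{2-n}$ is \emph{exact}, with no correction, precisely because the flat Green's function is explicit. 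Inverting via $y=x/|x|^2$, $r=|y|=1/|x|$, one has $\delta_x=r^{-4}\delta_y$ and $w=a\,c_n r^{n-2}+m_0+O(r^{-1})$, so that
\begin{align*}
g=w^{\frac{4}{n-2}}r^{-4}\delta_y=(a\,c_n)^{\frac{4}{n-2}}\Bigl(1+\tfrac{4m_0}{(n-2)\,a c_n}\,r^{-(n-2)}+O(r^{-(n-1)})\Bigr)\delta_y.
\end{align*}
Rescaling the coordinate to normalize the constant factor (equivalently taking $a\,c_n=1$) yields $g=\delta+O(r^{2-n})$, and the bounds $\partial^m g=O(r^{-(n-2)-|m|})$ follow by differentiating this smooth expansion, since the inversion and $h$ are smooth with matching decay of derivatives. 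The order-$(n-2)$ term is governed by the constant $m_0$ in the regular part of $w$ (the mass); by Theorem \ref{PMT} it vanishes only for flat $\mathbb{R}^n/\Gamma$, so the order $n-2$ cannot be improved in general, proving optimality.

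For the nonnegative scalar curvature case I would first produce a scalar-flat conformal metric and then invoke the above. As in the proof of Theorem \ref{positive yamabe}, pass to a conformal orbifold metric $g''$ on $M'$ with $S(g'')>0$; since $L_{g''}$ is then positive, its conformal Green's functions $G_{g''}(\cdot,x_i)$ are positive, and $\psi:=\sum_i G_{g''}(\cdot,x_i)$ defines $\hat g=\psi^{\frac{4}{n-2}}g''$ on $M=M'\setminus\{x_i\}$ with $L_{g''}\psi=0$, i.e.\ $S(\hat g)=0$; as $\hat g\in[g]$ this is the desired scalar-flat conformal metric, which the scalar-flat case shows to be ALE of order $n-2$. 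I expect the main obstacle to be the bookkeeping in the coordinate inversion: verifying that, once $w=a\,c_n|x|^{2-n}+m_0+O(|x|)$ and $g=w^{\frac{4}{n-2}}\delta$, every correction other than the $m_0$-contribution is genuinely $o(r^{2-n})$ together with all its derivatives, so the decay order is exactly $n-2$ and is not contaminated by subleading harmonic terms. The conceptual content, however, is carried entirely by the observation that local conformal flatness renders $w=\Phi u$ honestly harmonic, making Bôcher's expansion exact.
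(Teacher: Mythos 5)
Your argument is correct and takes essentially the same approach as the paper: compactify via Theorem \ref{positive yamabe}, identify the conformal factor as a superposition of conformal Green's functions by a Bôcher-type argument, and read off the order-$(n-2)$ decay (and its optimality via the mass term) by inverting the log-free local expansion $G=r^{2-n}+A+O(r)$. The only difference is cosmetic: you derive that expansion from the classical flat Bôcher theorem in conformally flat coordinates, whereas the paper cites Lemma 6.4 of \cite{lee1987yamabe} in conformal normal coordinates — the same computation, since in the LCF case conformal normal coordinates are conformally flat.
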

\begin{proof} Note that the ALE manifold in this case is scalar-flat. It is known that on the orbifold of positive Yamabe invariant, there exists positive conformal Green's functions. Using Bôcher's theorem, the conformal factors are the superposition of conformal Green's functions. In the conformal normal coordinates, the conformal Green's function has local expansion:
\begin{align*}
    G = r^{2-n} + A + O(r),
\end{align*}
which is without log terms, by the LCF assumption (See Lemma 6.4 of \cite{lee1987yamabe}). The optimal decay rate follows from direct computation.
\end{proof}
\begin{remark}\label{remark of order of decay}
    The same argument works for LCF, nonnegative scalar curvature, ALE orbifolds as well. To relate the manifolds and orbifolds with at most isolated singularities, we need the conformal blow-up at the singularities to produce ALE ends. This will be proved in Lemma \ref{ALE orbifold blow-up}.
\end{remark}
This is a much easier proof of the optimal ALE order for locally conformally flat scalar-flat ALE manifolds. The optimal decay rate for obstruction-flat scalar-flat ALE manifolds is proved by \cite{AcheViaclovsky}.

Conversely, we can reverse the above picture, starting from an LCF orbifold of positive orbifold Yamabe invariant with at most isolated singularities to construct the ALE manifolds. This will prove our good orbifold theorem for all LCF orbifolds with positive scalar curvature (Theorem \ref{good orbifold}).

\begin{proof}[Proof of Theorem \ref{good orbifold}]
    Let $(M,g)$ be our orbifold, with isolated singularities $\{x_i\}$. By the assumption of positive scalar curvature, there exists a positive Green's function of the conformal Laplacian blow-up at the orbifold point $x_i$. The superposition of all such Green's functions, $G$, will give us the conformal factor such that $(M\setminus\{x_i\}, G^{\frac{4}{n-2}}g)$ is a multi-ALE manifold of order $n-2$. Note $\pi_1^{orb}(M) \cong \pi_1(M\setminus\{x_i\})$.

    From Theorem \ref{ALE structure} and Corollary \ref{cor 1}, each isotropy group $\Gamma_{x_i}$ injects into the fundamental group $\pi_1(M\setminus\{x_i\})$. Thus, by a well-known theorem of orbifolds, an orbifold is good if and only if each isotropy group injects into the orbifold fundamental group (follow the same spirit of Theorem \ref{group injective}). Hence done. 
\end{proof}
\begin{remark}
    We use the ALE structure (Theorem \ref{ALE structure} and Corollary \ref{cor 1}) to prove the good orbifold theorem (Theorem \ref{good orbifold}). Note that the proof of Theorem \ref{ALE structure} uses the local structure of flat cone and the LCF condition as well. So the order of the proof does not matter, if we can show an orbifold version of Theorem \ref{thm SY}. Starting with the ALE manifolds seems more natural to the author.
\end{remark}

\section{Some classification results}

\subsection{Relation with the conformal group}
From the previous discussion, the compactified nonnegative scalar curvature ALE space, $M'$, can be viewed as the quotient of the compactified universal cover $\overline{M}$ by $\rho(\pi_1(M))$, where $\rho(\pi_1(M))$ acts on $\overline{M}$ properly discontinuously, with isolated fixed points, and the corresponding isotropy group is finite. To understand this quotient, we need to understand the subgroup of the conformal group of $\mathbb{S}^n$.

To study the local isotropy subgroups, we first show that we can conjugate the local group $\Gamma_i$ to be a subgroup of ${\rm{O}}(n)$.
\begin{lemma}\label{local group}
    Let $\Gamma\leq C(n)$ be a local group corresponding to the orbifold point $x$, then $\exists \gamma\in C(n)$ such that $\gamma^{-1}\Gamma\gamma\in {\rm{O}}(n)$.
\end{lemma}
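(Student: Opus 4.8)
The plan is to exploit the identification $C(n)\cong\operatorname{Isom}(\mathbb{H}^{n+1})$ together with the fact that $\Gamma$ is a \emph{finite} group. Since $\Gamma$ is the isotropy group of an isolated orbifold point it is finite, so by Remark \ref{elliptic} every one of its elements is elliptic; moreover, by Corollary \ref{cor 1}, $\Gamma$ fixes the boundary point $x\in\partial\mathbb{H}^{n+1}\cong\mathbb{S}^n$. The essential difficulty is that, although each individual element of $\Gamma$ can be conjugated into $O(n+1)$ (equivalently, possesses an interior fixed point in $\mathbb{D}^{n+1}$), conjugating the whole group \emph{simultaneously} requires a single point of $\mathbb{H}^{n+1}$ that is fixed by all of $\Gamma$ at once.

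First I would produce this common interior fixed point. The space $\mathbb{H}^{n+1}$ is a Hadamard manifold (complete, simply connected, nonpositively curved), so I can invoke the Cartan fixed point theorem: a finite group of isometries of a Hadamard manifold has a common fixed point. Concretely, choose any $p_0\in\mathbb{H}^{n+1}$; the orbit $\Gamma\cdot p_0$ is finite, hence bounded, and in a CAT$(0)$ space the circumcenter of a bounded set (the center of the unique smallest enclosing ball) is well defined and unique. Since $\Gamma$ permutes its own orbit isometrically, it preserves the smallest enclosing ball and therefore fixes its circumcenter $p\in\mathbb{H}^{n+1}$.

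Next I would normalize and read off the group. Working in the ball model $(\mathbb{D}^{n+1},g_{\mathbb{H}^{n+1}})$, pick $\gamma\in C(n)$ with $\gamma(0)=p$, so that $\gamma^{-1}\Gamma\gamma$ fixes the center $0$; indeed, for $\sigma\in\Gamma$ one has $\gamma^{-1}\sigma\gamma(0)=\gamma^{-1}\sigma(p)=\gamma^{-1}(p)=0$. The isometries of $\mathbb{D}^{n+1}$ fixing the origin are precisely the linear maps in $O(n+1)$, so $\gamma^{-1}\Gamma\gamma\leq O(n+1)$, acting on the boundary $\mathbb{S}^n\subset\mathbb{R}^{n+1}$ in the standard way. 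Finally, $\gamma^{-1}\Gamma\gamma$ also fixes the boundary point $q:=\gamma^{-1}(x)\in\mathbb{S}^n$, since $\gamma^{-1}\sigma\gamma(q)=\gamma^{-1}\sigma(x)=\gamma^{-1}(x)=q$; but an orthogonal transformation of $\mathbb{R}^{n+1}$ fixing a unit vector $q$ preserves $q^{\perp}\cong\mathbb{R}^n$ and hence lies in $\operatorname{Stab}_{O(n+1)}(q)\cong O(n)$. Therefore $\gamma^{-1}\Gamma\gamma\leq O(n)$, as claimed. The only nontrivial input is the Cartan fixed point theorem, which is the main obstacle to overcome; the remainder is the standard dictionary relating $\operatorname{Isom}(\mathbb{H}^{n+1})$, the point stabilizer $O(n+1)$, and the boundary-point stabilizer $O(n)$.
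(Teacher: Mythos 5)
Your proof is correct, but it takes a genuinely different route from the paper's. The paper works entirely on the boundary sphere and proceeds element-by-element: it first conjugates a single element $g\in\Gamma$ into ${\rm{O}}(n)$, writes every other $f\in\Gamma$ in the normalization $\gamma^{-1}f\gamma(v)=cB(v)+a$ with $x=\infty$, uses finite order to force $c=1$, and then kills the translation part by observing that $\Gamma$ preserves a sphere $K\cong\mathbb{S}^{n-1}$ coming from the linear orbifold chart, while the only spheres invariant under the linear element $g$ are the round spheres $S_R$ centered at the origin. You instead conjugate the whole group simultaneously: by the Cartan (Bruhat--Tits circumcenter) fixed point theorem, the finite group $\Gamma\leq\operatorname{Isom}(\mathbb{H}^{n+1})$ fixes an interior point $p$, so after moving $p$ to the center of the ball model the group lies in ${\rm{O}}(n+1)$, and the surviving boundary fixed point $q=\gamma^{-1}(x)$ cuts this down to $\operatorname{Stab}_{{\rm{O}}(n+1)}(q)\cong{\rm{O}}(n)$. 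Your argument needs strictly less input --- only the finiteness of $\Gamma$ and the fixed boundary point, not the invariant sphere $K$ from the local chart --- at the price of importing the CAT$(0)$ circumcenter machinery, whereas the paper's computation is elementary within conformal coordinates. Both arguments produce the same normalized picture, since the stabilizer of $q$ in ${\rm{O}}(n+1)$ automatically fixes the antipode $-q$ as well, which is exactly the pair of antipodal fixed points the paper exploits later (e.g., in Theorem \ref{orbifold points num}). One small citation quibble: the fact that $\Gamma$ fixes $x$ is part of the setup of the lemma (the local group consists of the conformal transformations fixing the orbifold point), rather than a consequence of Corollary \ref{cor 1}, which concerns the groups of ALE ends; this does not affect the validity of your argument.
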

\begin{proof}
    Let $g\in \Gamma$. Since $g$ fixes $x$, $g$ is of finite order, then $g$ is elliptic. Thus, $\Gamma$ is an elliptic subgroup. Note that we can always consider the chart where $\Gamma$ acts as the finite subgroup of the linear group fixing the origin (linear chart). Thus, $\Gamma$ will fix some $K\cong \mathbb{S}^{n-1}$ in the local universal cover.
    
    Now, there exists $\gamma\in C(n)$ such that $\gamma^{-1}g\gamma\in {\rm{O}}(n+1)$ (\cite{conformalbook}, Chapter 2). Since $g$ fixes $x$, then $\gamma^{-1}g\gamma\in {\rm{O}}(n)$. By ${\rm{O}}(n+1)$ acting transitively on $\mathbb{S}^n$, we can also compose $\gamma$ with elements in ${\rm{O}}(n+1)$ such that $\gamma$ fixes $x$. Hence $\gamma^{-1}g\gamma$ also fixes $-x$, the antipodal point, and we can write $g\in C(\mathbb{R}^n)$, the conformal group of $\mathbb{R}^n$, with the following expression:
    \begin{align}\label{g expression}
            \gamma^{-1}g\gamma(v)=A(v)
    \end{align}
    for $A\in {\rm{O}}(n)$, for some $v\in \mathbb{R}^n$. (We let $x$ be $\infty$ and $-x$ be $0$.) 
    
    Now, $\forall f\in\Gamma$, $\gamma^{-1} f\gamma$ also fixes $x$. Thus, we write $\gamma^{-1}f\gamma(v)=cB(v)+a$. $c$ is a constant, $B\in {\rm{O}}(n)$, and $a\in\mathbb{R}^n$. Since $f$ is of finite order, then $c=1$. $\gamma^{-1}f\gamma(v)=B(v+b)-b$.

    Since $f$ and $g$ map some $K\cong \mathbb{S}^{n-1}$ to themselves, then $\gamma^{-1}f\gamma$ and $\gamma^{-1}g\gamma$ map $\gamma^{-1}(K)\cong\mathbb{S}^{n-1}$ to itself. Now, from (\ref{g expression}), the only fixed $\mathbb{S}^{n-1}$ are
    \begin{align*}
            S_R = \{v\in\mathbb{R}^n:|v|=R\}.
    \end{align*}
    Then $\gamma^{-1}f\gamma$ fixes $S_R$ if and only if $b=0$. Thus, $\gamma^{-1}f\gamma\in {\rm{O}}(n)$. Hence $\gamma^{-1}\Gamma\gamma\leq {\rm{O}}(n)$. In particular, $\gamma^{-1}\Gamma\gamma$ fixes $x$, $-x$.
\end{proof}

Thus, for a subgroup $\Gamma\leq {\rm{O}}(n)$ acting on $\mathbb{S}^n$, if it has a fixed point, it fixes the antipodal point as well. It seems that the non-trivial orbifold points must appear in pairs. This will be the case if this subgroup does not contain parabolic elements. The proof of the following theorem can be found in Appendix \ref{orbifold points in the quotient}.
\begin{theorem}\label{orbifold points num}
        Let $G\leq C(n)$, a discrete subgroup acting on $\mathbb{S}^n$ properly discontinuously. Denote the limiting set of $G$ as $\Lambda$. Assume there are no parabolic elements in $G$ and the Hausdorff dimension $\operatorname{dim}_{\mathcal{H}}(\Lambda) < \frac{n-2}{2}$, then the fixed points of the local isotropy subgroup $\Gamma$ must appear in pairs unless the quotient is non-orientable.
\end{theorem}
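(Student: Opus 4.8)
The plan is to analyze the fixed-point structure of the action of $G$ on $\mathbb{S}^n$ and pair up the orbifold points. The main point is that a local isotropy subgroup $\Gamma$ comes from some elliptic element fixing a point $x\in\mathbb{S}^n$, and by Lemma \ref{local group} we may conjugate so that $\Gamma$ fixes both $x$ and its antipode $-x$; this suggests that $-x$ (or rather its image in the quotient) should be the partner of $x$. First I would set up the correspondence: for each fixed point $x$ of the isotropy group $\Gamma_x$, consider the antipodal point $-x$. Since $\Gamma_x$ fixes the round sphere $S_R$ it must fix $-x$ as well, so $\Gamma_x \leq \Gamma_{-x}$, and by symmetry $\Gamma_x = \Gamma_{-x}$ as subgroups of $\mathrm{O}(n)$. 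The goal is then to show that $x$ and $-x$ descend to \emph{distinct} orbifold points in the quotient $\mathbb{S}^n/G$ (so they genuinely pair up rather than being identified), and that this pairing is well-defined on $G$-orbits.

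The key steps in order: (1) For each non-trivial fixed point $x\in\Omega$, let $\Gamma_x$ be its (finite, elliptic) isotropy subgroup. Use the conjugation from Lemma \ref{local group} to locate the antipode $-x$ and argue $\Gamma_x=\Gamma_{-x}$. (2) Verify $x,-x\in\Omega$, i.e. neither lies in the limit set $\Lambda$; here the hypothesis $\operatorname{dim}_{\mathcal{H}}(\Lambda)<\frac{n-2}{2}$ is used to guarantee that fixed points of isotropy are away from the limit set (the limit set is too small, dimensionally, to contain an isolated fixed-point pair of an elliptic element; and the no-parabolics assumption ensures every nontrivial isotropy element is elliptic with exactly the antipodal fixed-point pair, never a single parabolic fixed point). (3) Show that $x$ and $-x$ lie in different $G$-orbits under the orientation hypothesis. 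This is the crux: if some $h\in G$ sent $x$ to $-x$, then $h$ would map a neighborhood of $x$ to a neighborhood of $-x$; composing with the antipodal-type conjugacy gives an element fixing $x$ and acting on the tangent sphere, and the claim is that such an identification forces the quotient map to reverse orientation at the singular point. Concretely, the antipodal relationship $x\leftrightarrow -x$ is realized, after conjugation to $\mathrm{O}(n)$ acting linearly, by the inversion $v\mapsto v/|v|^2$ (swapping $0$ and $\infty$), which is orientation-reversing on $\mathbb{S}^n$ for the relevant parity; so an element of $G$ identifying the two points would be orientation-reversing, contradicting orientability. (4) Conclude that non-trivial orbifold points occur in the $G$-orbit pairs $\{[x],[-x]\}$, hence in pairs, whenever the quotient is orientable.

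I expect step (3) to be the main obstacle: cleanly relating the antipodal identification $x\leftrightarrow -x$ to an orientation reversal of the quotient, and ruling out the degenerate possibility that $x$ and $-x$ are already $G$-equivalent via an orientation-preserving element. The honest difficulty is that the inversion swapping a point and its antipode need not be orientation-reversing in every dimension, so the argument must track the dimensional parity carefully, and one must argue that any $g\in G$ carrying $x$ to $-x$ composes with the fixing element to produce a genuinely orientation-reversing conformal transformation of $\mathbb{S}^n$. I would handle the dimension dependence by reducing to the linear model of Lemma \ref{local group} (where $\Gamma_x\subset\mathrm{O}(n)$ fixes $0$ and $\infty$) and computing the orientation behavior of the swap map there, invoking the no-parabolic and small-limit-set hypotheses to ensure all relevant group elements are elliptic and that the fixed-point set of $\Gamma_x$ is exactly the antipodal pair. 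The remaining bookkeeping—that distinct $G$-orbits give distinct orbifold points and that the pairing is a genuine involution on the set of singular orbits—is then routine.
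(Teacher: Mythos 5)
Your overall frame---conjugate $\Gamma$ into $\mathrm{O}(n)$ via Lemma \ref{local group} so that it fixes an antipodal pair $\{s,-s\}$, then show the two points descend to distinct orbifold points in the quotient---matches the paper, but two of your steps contain genuine gaps. The first is your step (2): you claim the bound $\dim_{\mathcal{H}}(\Lambda)<\frac{n-2}{2}$ forces $-x\notin\Lambda$ because the limit set is ``too small, dimensionally.'' This cannot work: a single point has Hausdorff dimension $0$, so no dimension hypothesis excludes $-x\in\Lambda$. The paper treats this case with a completely different tool: if $-s\in\Lambda$ were a hyperbolic fixed point of some $g\in G$, then conjugating $g$ by an isotropy element $\gamma\in\Gamma$ (which fixes $s$ but moves the other fixed point of $g$) produces a second hyperbolic element sharing exactly one fixed point with $g$, and Lemma \ref{hyperbolic fixed point} then contradicts discreteness of $G$. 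This is where the no-parabolics hypothesis actually earns its keep---not, as you suggest, to make isotropy elements elliptic, which is automatic for finite-order elements (Remark \ref{elliptic}).

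The second gap is the crux you yourself flagged in step (3): your claim that any $h\in G$ with $h(x)=-x$ must be orientation-reversing is false in general, and you supply no mechanism for the orientation-preserving cases. The paper's route is structurally different. Setting $s=0$, $-s=\infty$, a swapping element has the form $\gamma_0(v)=r^2U\frac{v}{|v|^2}+q'$; since $\gamma_0$ must permute the round spheres $S_t$ preserved by $\Gamma$, one gets $q'=0$, and then $\gamma_0^2$ fixes $s$ and is forced to be the identity, so $U$ is an involution, $U=A^{-1}D_kA$, and $\gamma_0$ fixes a round $\mathbb{S}^k$. Because singularities are isolated, this fixed sphere must lie in $\Lambda$---and \emph{here}, not in your step (2), is where the hypothesis $\dim_{\mathcal{H}}(\Lambda)<\frac{n-2}{2}$ is used: it bounds $k<\frac{n-2}{2}$. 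The parity analysis then shows: for $n$ odd (where $\Gamma\cong\mathbb{Z}_2$), and for $n$ even with $k$ even, composing $\gamma_0$ with a $\mathbb{Z}_2$ isotropy element $\alpha$ yields an element of $G$ fixing an $\mathbb{S}^{n-1}$ or an $\mathbb{S}^{n-k-2}$, whose dimension exceeds $\dim_{\mathcal{H}}(\Lambda)$, so it cannot sit in $\Lambda$ and would create an edge singularity, contradicting isolatedness; only for $n$ even with $k$ odd (or $U=-\mathrm{Id}$) is $\gamma_0$ orientation-reversing, which is exactly the ``unless the quotient is non-orientable'' clause. Without the involution structure $\gamma_0^2=\mathrm{Id}$, the fixed-sphere-in-$\Lambda$ dimension count, and the edge-singularity contradiction obtained by composing with an isotropy involution, your ``parity bookkeeping'' has nothing to compute with, so the obstacle you identified is a genuinely missing idea rather than routine.
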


In particular, the above theorem applies when the fundamental group is a Schottky group. We have the following corollary regarding the number of orbifold points and the corresponding non-trivial ALE ends.

\begin{corollary}\label{orientable ALE end} Let $(M,g)$ be an ALE LCF manifold. If $M$ is orientable and there is no parabolic element in $\pi_1(M)$, then the ALE ends with nontrivial group must appear in pairs.
\end{corollary}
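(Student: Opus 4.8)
The plan is to deduce the statement from Theorem \ref{orbifold points num} by passing to the conformal compactification of $(M,g)$ (which I take to carry the standing nonnegative scalar curvature assumption of this subsection) and translating the conclusion about fixed points of isotropy subgroups into one about ALE ends. First I would compactify as in Section \ref{good orbifolds proof}, producing the good Kleinian orbifold $M' = \overline{M}/G$, where $\overline{M} = \mathbb{S}^n \setminus \Lambda$ is the compactified universal cover, $G = \rho(\pi_1(M)) \leq C(n)$ is the holonomy image, and $\Lambda$ is the limit set. By the discussion following Proposition \ref{ALE structure}, the orbifold points of $M'$ are exactly the compactification points $\{x_1,\dots,x_m\}$ of the ALE ends, and the isotropy at $x_k$ is the end group $\Gamma_k$ (Corollary \ref{cor 1}); hence an end has nontrivial group precisely when its compactification point is a genuine orbifold point. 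It therefore suffices to show that the nontrivial orbifold points of $M'$ occur in pairs.

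Next I would check the hypotheses of Theorem \ref{orbifold points num} for the $G$-action on $\mathbb{S}^n$. Discreteness and proper discontinuity are part of the Kleinian structure. Since a parabolic element of $\pi_1(M)$ is by definition one whose image under $\rho$ is parabolic in $C(n)$, the assumption that $\pi_1(M)$ has no parabolic element means exactly that $G$ has none. Orientability of $M$ induces an orientation on $M'$, so the quotient is orientable and the exceptional non-orientable alternative in Theorem \ref{orbifold points num} does not occur. By Lemma \ref{local group}, after conjugation each nontrivial $\Gamma_k$ lies in ${\rm O}(n)$ and fixes a pair of antipodal points $x_k, -x_k \in \mathbb{S}^n$; Theorem \ref{orbifold points num} then forces these to lie in $\overline{M}$ and to descend to two \emph{distinct} nontrivial orbifold points of $M'$, i.e. to a pair of distinct nontrivial ALE ends (with conjugate groups). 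This is the translation that yields the corollary.

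The main obstacle is verifying the Hausdorff dimension hypothesis $\dim_{\mathcal H}(\Lambda) < \frac{n-2}{2}$ needed to invoke Theorem \ref{orbifold points num}. Since $M'$ is compact, $G$ acts cocompactly on its domain of discontinuity, so $G$ is convex cocompact and $\dim_{\mathcal H}(\Lambda)$ equals its critical exponent $\delta(G)$. For LCF metrics of nonnegative scalar curvature the Schoen--Yau estimate \cite{SYconformal} gives $\delta(G) \leq \frac{n-2}{2}$, and this bound is where the ALE hypothesis must be used to obtain the strict inequality. Here I would invoke positivity of the orbifold Yamabe invariant of the compactification, $Y_{orb}(M') > 0$ (Theorem \ref{positive yamabe}): positivity of the bottom of the conformal Laplacian spectrum on $\Omega/G$ corresponds exactly to $\delta(G) < \frac{n-2}{2}$. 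With this strict bound established, all hypotheses of Theorem \ref{orbifold points num} hold and the pairing of nontrivial ends follows.
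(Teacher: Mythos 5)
Your overall route is the paper's: compactify via Theorem \ref{positive yamabe}, identify the nontrivial ALE ends with the nontrivial orbifold points of $M'$ with isotropy $\Gamma_k$ (Corollary \ref{cor 1}), establish $\dim_{\mathcal{H}}(\Lambda)<\frac{n-2}{2}$, and apply Theorem \ref{orbifold points num}; your translation paragraph (ends $\leftrightarrow$ paired fixed points of the isotropy groups, with orientability excluding the exceptional case) is correct and makes explicit what the paper leaves implicit. The genuine gap is in how you obtain the strict dimension bound. You assert that compactness of $\Omega/G$ makes $G$ convex cocompact, so that $\dim_{\mathcal{H}}(\Lambda)=\delta(G)$, and then invoke a spectral correspondence between positivity of the conformal Laplacian on $\Omega/G$ and $\delta(G)<\frac{n-2}{2}$. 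The first assertion is false at the stated level of generality: cocompactness of the action on the domain of discontinuity does not imply convex cocompactness (geometrically infinite Kleinian groups, e.g.\ singly degenerate surface groups, have compact quotient $\Omega/G$), and in general only the \emph{conical} limit set has Hausdorff dimension $\delta(G)$ (Bishop--Jones), so $\dim_{\mathcal{H}}(\Lambda)=\delta(G)$ requires geometric finiteness --- which you have not established, and which is also the hypothesis under which the spectral correspondence you quote is proved. As written, the argument is circular at this point: you need convex cocompactness to run the correspondence, but you derived it from compactness of $\Omega/G$ alone.

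The paper sidesteps all of this. It first passes to the finite manifold cover $N$ of $M'$ furnished by Proposition \ref{manifold cover} (Selberg's lemma), which has $Y(N)>0$ and whose fundamental group has the same limit set $\Lambda$ (as noted in the introduction, $\dim_{\mathcal{H}}(\Lambda_G)=\dim_{\mathcal{H}}(\Lambda_H)$ for the finite-index torsion-free subgroup $H\leq G$). Then Proposition 4.7 of \cite{SYconformal} gives $\dim_{\mathcal{H}}(\Lambda)\leq\frac{n-2}{2}$ with no convex cocompactness input, and Corollary 3.4 of \cite{Nayatani} upgrades this to the strict inequality from $Y(N)>0$, stated directly in terms of the Hausdorff dimension of the limit set rather than the critical exponent. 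Your write-up also skips this cover step and applies dimension estimates to the orbifold quotient directly, which is harmless only once the equality of limit sets under finite-index passage is invoked. Replacing your critical-exponent paragraph with this two-citation argument on $N$ repairs the proof; the rest of your proposal then goes through as the paper intends.
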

\begin{proof}
Let $M'$ be the compactified ALE manifold. 
        By Theorem \ref{positive yamabe}, we have $Y_{orb}(M') > 0$. Thus, the compact manifold cover $N$, as in Proposition \ref{manifold cover}, has $Y(N)>0$. By Proposition 4.7 in \cite{SYconformal}, $\operatorname{dim}_{\mathcal{H}}(\Lambda)\leq \frac{n-2}{2}$. Since $(N,g')$ has a positive Yamabe constant, by Corollary 3.4 in \cite{Nayatani}, $\operatorname{dim}_{\mathcal{H}}(\Lambda)<\frac{n-2}{2}$. Thus, we can apply Theorem \ref{orbifold points num}.
\end{proof}
Corollary \ref{orientable ALE end} will be used to prove the ALE end structure part in Theorem \ref{classification}.

\subsection{Low dimension classifications and the proof of Theorem \ref{classification}} \label{classification proof}

From Theorem \ref{positive yamabe}, there is a one-to-one correspondence between LCF orbifolds with positive Yamabe invariant and LCF, scalar-flat ALE spaces. We obtain some classification theorems of the orbifolds in low dimensions. Thus, we also classify the ALE spaces in these cases.

Specifically, when $n\leq 4$, we have a classification of the LCF orbifolds with positive scalar curvature with the help of Ricci flow.

\begin{theorem}[3D, \cite{GL},\cite{Ilimit},\cite{KL}]\label{3d} The 3-D LCF orbifolds with positive scalar curvature are diffeomorphic to the connected sum of the quotients $\mathbb{S}^3/\Gamma$ and $\mathbb{S}^1\times \mathbb{S}^2/\Gamma$, i.e.,
\begin{align*}
    M\cong \mathbb{S}^3/\Gamma_1\#\mathbb{S}^3/\Gamma_2\#...\mathbb{S}^3/\Gamma_k\#\mathbb{S}^2\times \mathbb{S}^1/\Gamma'_1\#...\mathbb{S}^2\times \mathbb{S}^1/\Gamma'_m,
\end{align*}
where all the orbifold points are $\mathbb{Z}_2$-quotient singularities.
\end{theorem}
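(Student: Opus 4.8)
\textbf{Proof proposal for Theorem \ref{3d}.}

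The plan is to follow the now-standard strategy of using orbifold Ricci flow with surgery, adapted from the smooth $3$-manifold case, together with the structural input that Theorem \ref{good orbifold} provides. First I would invoke Theorem \ref{good orbifold}: since $(M,g)$ is a compact LCF orbifold with positive scalar curvature and $\dim M = 3 \geq 3$, it is a \emph{good} orbifold, so it is globally a quotient of a manifold by a finite group with only isolated fixed points. Combined with Lemma \ref{local group}, each local isotropy group $\Gamma \leq \mathrm{O}(3)$ fixes an antipodal pair of points; the key observation to extract is that in dimension $3$ the only isolated-fixed-point isometries of $\mathbb{S}^2$ that arise are orientation-reversing $\mathbb{Z}_2$-actions (an element of $\mathrm{O}(3)$ with an isolated fixed point on $\mathbb{S}^2$ must be the antipodal-type involution $-\mathrm{Id}$ acting on the normal $\mathbb{R}^3$), which forces every nontrivial orbifold point to be a $\mathbb{Z}_2$-quotient singularity. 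This already pins down the statement about the singularities.

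Next I would run the orbifold version of Hamilton--Perelman Ricci flow with surgery, as developed in \cite{KL} for orbifolds and used in \cite{Ilimit}. Starting from a positive-scalar-curvature metric, the flow with surgery decomposes $M$ by the standard neck/cap analysis: the positivity of scalar curvature (which is preserved and improves under the flow) drives the manifold to extinction in finite time, and the topological type is recovered as a connected sum of the pieces that appear as surgery caps and as the geometries surviving at extinction. The point-picking and canonical-neighborhood arguments go through in the orbifold setting because the singularities are isolated $\mathbb{Z}_2$-points, which are modeled on $\mathbb{R}^3/\mathbb{Z}_2$ and behave well under the $\kappa$-noncollapsing and surgery framework. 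The resulting building blocks are the spherical space forms $\mathbb{S}^3/\Gamma$ (possibly with $\mathbb{Z}_2$-orbifold points) and the $\mathbb{S}^2 \times \mathbb{S}^1$-type necks, together with their orbifold quotients $\mathbb{S}^2 \times \mathbb{S}^1/\Gamma'$ arising from the $\mathbb{Z}_2$-singular fibers; this yields precisely the asserted connected-sum decomposition.

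The main obstacle I anticipate is verifying that the Ricci-flow-with-surgery machinery, including the canonical neighborhood theorem, the $\kappa$-noncollapsing estimate, and the classification of $3$-dimensional $\kappa$-solutions, extends cleanly to orbifolds with isolated $\mathbb{Z}_2$-singularities, and that surgeries can be performed without creating new or worse singularities. Here I would lean on the references \cite{KL} and \cite{Ilimit}, where this orbifold Ricci flow has been carried out, rather than reproving it; the role of the LCF and good-orbifold hypotheses is to guarantee that all singularities are of the tractable $\mathbb{Z}_2$-type and that no genuinely bad orbifold behavior can occur along the flow. A secondary technical point is ensuring that the $\mathbb{S}^2 \times \mathbb{S}^1/\Gamma'$ factors are correctly identified: these come from necks whose cross-sectional $\mathbb{S}^2$ carries a $\mathbb{Z}_2$-action, and I would confirm via the local model that the only compatible orbifold fibration is the one listed. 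Once the orbifold flow output is in hand, the diffeomorphism-type conclusion is read off directly from the surgery description.
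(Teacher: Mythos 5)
The paper itself gives no proof of Theorem \ref{3d}: it is imported as a known result from \cite{GL}, \cite{Ilimit}, \cite{KL}, and is then used as input in the proof of Theorem \ref{classification}. Your proposal is therefore not competing with an argument in the paper but reconstructing the cited literature, and as such it is essentially sound: the orbifold Ricci flow with surgery of \cite{KL}, together with finite-time extinction under positive scalar curvature, does yield the connected-sum decomposition into $\mathbb{S}^3/\Gamma$ and $\mathbb{S}^2\times\mathbb{S}^1/\Gamma'$ pieces, with the quotient necks accounting for the $\mathbb{S}^2\times\mathbb{S}^1/\Gamma'$ factors. It is worth noting, though, that the LCF hypothesis permits an alternative route much closer in spirit to the rest of the paper: by Schoen--Yau the universal cover develops conformally into $\mathbb{S}^3$, positive scalar curvature forces $\dim_{\mathcal{H}}(\Lambda)<1$ (Remark \ref{n=3,4}), and Izeki's structure theorem for such Kleinian groups together with the Klein combination then produces the decomposition with no flow machinery at all --- this is exactly the mechanism the paper runs in the proof of Theorem \ref{classification}. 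Your flow-based route buys applicability beyond the LCF class; the Kleinian route buys a proof that is \emph{conformal}, which is what the paper actually needs downstream (conformal connected sums).

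Two local corrections. First, the claim that every singular point is a $\mathbb{Z}_2$-quotient needs neither Theorem \ref{good orbifold}, Lemma \ref{local group}, nor the LCF hypothesis: an isolated singularity means the orbifold group $\Gamma\leq \mathrm{O}(3)$ acts freely on $\mathbb{S}^2$; every nontrivial element of $\mathrm{SO}(3)$ is a rotation with fixed points on $\mathbb{S}^2$, so all nontrivial elements of $\Gamma$ are orientation-reversing, forcing $|\Gamma|=2$, and the unique fixed-point-free involution in $\mathrm{O}(3)$ is $-\operatorname{Id}$. Second, your parenthetical formulation of this fact is off: $-\operatorname{Id}$ has \emph{no} fixed point on $\mathbb{S}^2$, whereas an element of $\mathrm{O}(3)$ with an isolated fixed point on $\mathbb{S}^2$ is a nontrivial rotation, whose quotient would exhibit a one-dimensional singular edge, contradicting isolatedness; what you need is precisely freeness of the $\Gamma$-action on the unit sphere of the tangent space. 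With these repairs, and with the heavy surgery machinery correctly delegated to \cite{KL} and \cite{Ilimit} --- as the paper itself does --- the proposal is acceptable.
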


\begin{theorem}[4D, \cite{Ilimit}, \cite{Ha}, \cite{ChenZhu}]\label{4d}
    The 4-D LCF orbifolds with positive scalar curvature are diffeomorphic to the connected sum of the quotients $\mathbb{S}^4/\Gamma$ and $\mathbb{S}^1\times \mathbb{S}^3/\Gamma$, i.e.,
    \begin{align*}
    M\cong \mathbb{S}^4/\Gamma_1\#\mathbb{S}^4/\Gamma_2\#...\mathbb{S}^4/\Gamma_k\#\mathbb{S}^3\times \mathbb{S}^1/\Gamma'_1\#...\mathbb{S}^3\times \mathbb{S}^1/\Gamma'_m.
\end{align*}
\end{theorem}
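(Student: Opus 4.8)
The plan is to combine the conformal connected-sum decomposition of Theorem~\ref{classification} with the Ricci-flow classification of four-orbifolds of positive isotropic curvature. The first point is that the hypotheses of Theorem~\ref{classification} are met when $n=4$: by Remark~\ref{n=3,4}, positive (indeed non-negative) scalar curvature forces $\dim_{\mathcal{H}}(\Lambda)<1$. By Theorem~\ref{good orbifold}, $M$ is a good orbifold, hence Kleinian of the form $\Omega/G$, and Theorem~\ref{classification} then yields a splitting
\begin{align*}
    M = \#_i\,(\mathbb{S}^3\times\mathbb{R}/G_i)\,\#_j\,(\mathbb{S}^4/\Gamma_j),
\end{align*}
where each $G_i$ is cocompact in $\operatorname{Isom}(\mathbb{S}^3\times\mathbb{R})$ and each $\Gamma_j\leq \mathrm{O}(5)$ has isolated fixed points. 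Since these connected sums are performed along annular regions diffeomorphic to $[0,1]\times\mathbb{S}^3$, the splitting is already a smooth connected sum, so it then remains to identify the diffeomorphism types of the two kinds of building blocks.

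The $\mathbb{S}^4/\Gamma_j$ factors are already in the desired form. For the cylindrical pieces I would analyze the cocompact subgroups $G_i\leq\operatorname{Isom}(\mathbb{S}^3\times\mathbb{R})\cong\bigl(\mathrm{O}(4)\times\operatorname{Isom}(\mathbb{R})\bigr)$ by projecting to the $\operatorname{Isom}(\mathbb{R})$ factor. Cocompactness forces the image to be infinite, i.e. either an infinite cyclic group of translations or an infinite dihedral group. In the cyclic case the quotient is a mapping torus of a finite-order isometry of $\mathbb{S}^3$, hence diffeomorphic to some $\mathbb{S}^3\times\mathbb{S}^1/\Gamma'$; the dihedral case would be reduced to the cyclic one after passing to the index-two translation subgroup and recording the resulting orbifold structure. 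This exhibits each cylindrical factor as an $\mathbb{S}^3\times\mathbb{S}^1/\Gamma'$ summand, producing the stated form.

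To obtain the diffeomorphism classification directly, and to confirm the identification of the pieces, I would invoke the equivalence of \cite{ChenZhu} (their Corollary~2): a compact four-orbifold with isolated singularities carries an LCF metric of positive scalar curvature if and only if it carries a metric of positive isotropic curvature. Running Hamilton's Ricci flow with surgery \cite{Ha}, extended to the orbifold setting in \cite{ChenZhu}, the flow would decompose $M$ through neck pinches and spherical surgeries, with each surgery contributing an $\mathbb{S}^4/\Gamma$ cap and each surviving neck an $\mathbb{S}^3\times\mathbb{S}^1/\Gamma'$ connect-summand; an induction on the number of surgeries then yields the connected-sum expression.

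I expect the main obstacle to be the orbifold bookkeeping in the Ricci-flow argument: Hamilton's surgery theory is built for smooth manifolds, so one must control the isolated singular points throughout the flow and verify that surgeries never create non-isolated or exotic singularities—precisely the technical content of \cite{ChenZhu}. On the conformal side, the analogous difficulty is the complete classification of the cocompact groups $G_i$, in particular checking that the dihedral and orientation-reversing cases still close up to standard quotients $\mathbb{S}^3\times\mathbb{S}^1/\Gamma'$ rather than to more exotic flat $\mathbb{S}^3$-bundles over $\mathbb{S}^1$.
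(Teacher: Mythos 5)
This theorem is quoted in the paper, not proved there: the paper's entire treatment consists of the citations \cite{Ilimit}, \cite{Ha}, \cite{ChenZhu}, together with the remark following Theorem~\ref{4d} that the four-dimensional classification is ``actually obtained by Ricci flow on the Positive Isotropic Curvature.'' Your second route --- the equivalence from Corollary~2 of \cite{ChenZhu} between admitting an LCF metric of positive scalar curvature and admitting a PIC metric for compact four-orbifolds with isolated singularities, followed by Hamilton's Ricci flow with surgery \cite{Ha} in its orbifold extension \cite{ChenZhu} --- is precisely the argument of the cited sources, and your outline of it is accurate, including your identification of the genuinely technical content (controlling the isolated singular points through the surgeries) as the part carried out in \cite{ChenZhu}.

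Your first route, however, is circular relative to this paper. You derive the splitting $M=\#_i\,(\mathbb{S}^3\times\mathbb{R}/G_i)\,\#_j\,(\mathbb{S}^4/\Gamma_j)$ from Theorem~\ref{classification}; but in Section~\ref{classification proof} the $n=3,4$ part of Theorem~\ref{classification} is itself proved by quoting Theorems~\ref{3d} and \ref{4d} verbatim (``The low-dimensional topological classification is given by Theorem~\ref{3d}, \ref{4d}''), with Theorem~2.7 of \cite{Idecomposition} serving only to upgrade the smooth connected sum to a conformal one. The only portion of Theorem~\ref{classification} proved independently of Theorem~\ref{4d} is the first claim --- that a \emph{finite cover} is $\mathbb{S}^n$ or $\#_k\,\mathbb{S}^1\times\mathbb{S}^{n-1}$, via Izeki's Theorem~6.2 and Selberg's lemma --- and that is a statement about a cover, not about $M$: descending a connected-sum decomposition equivariantly from the manifold cover to the orbifold quotient is exactly the nontrivial step that the Ricci-flow argument supplies, and your proposal does not address it. Once the circular half is deleted, what remains is the correct citation-level argument; your analysis of the cocompact groups $G_i\leq \mathrm{O}(4)\times\operatorname{Isom}(\mathbb{R})$ (infinite cyclic versus infinite dihedral image, mapping tori of isometries of $\mathbb{S}^3$, which up to isotopy in the two components of $\mathrm{O}(4)$ give only the standard quotients $\mathbb{S}^3\times\mathbb{S}^1/\Gamma'$) is sound as far as it goes, but it identifies the cylindrical pieces \emph{after} the decomposition is known and cannot substitute for producing the decomposition itself.
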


In the compactified universal cover, $\overline{M}$, of the ALE manifold, by positive Yamabe, Proposition 4.7 in \cite{SYconformal}, and Corollary 3.4 \cite{Nayatani}, the Hausdorff dimension of the limiting set induced by $\rho(\pi_1(M))$, $\Lambda$, is strictly less than $\frac{n-2}{2}$. In particular, when $n = 3, 4$, the non-negative scalar curvature implies $\dim_{\mathcal{H}}(\Lambda)<1$.

\begin{proof}[Proof of Theorem \ref{classification}]
    Suppose $\dim_{\mathcal{H}}(\Lambda)<1$, then, we can apply Theorem 6.2 in \cite{Ilimit} and Selberg's lemma (Theorem \ref{theorem.selberg}). There is a torsion-free normal subgroup of $\rho(\pi_1(M))$ containing no parabolic elements, and there is a finite cover of $\overline{M}/\rho(\pi_1(M))$ (possibly a finite cover of the manifold cover) that is diffeomorphic to
    \begin{align*}
    \mathbb{S}^1\times \mathbb{S}^{n-1}\# ...\# \mathbb{S}^1\times \mathbb{S}^{n-1} = k(\mathbb{S}^1\times \mathbb{S}^{n-1}),
    \end{align*}
    for some $k>0$.

    The low-dimensional topological classification is given by Theorem \ref{3d}, \ref{4d}.
    For the conformal connected sum: note the above decomposition can be viewed as the connected sum of orbifolds near the manifold points. Thus, we can apply Theorem 2.7 from \cite{Idecomposition}.
    
    For the last statement in Theorem \ref{classification}: If we in addition assume $(M,g)$ is orientable, since $\dim_{\mathcal{H}}(\Lambda)<1$, there is no parabolic element. Then we can apply Corollary \ref{orientable ALE end}. Note, when $n$ is odd, the local isotropy group is isomorphic to $\mathbb{Z}_2\subseteq {\rm{O}}(n)$, which is orientation-reversing. So by Theorem~\ref{group injective}, when $M$ is orientable, then it has no such orbifold points.  
\end{proof}

Note in \cite{Ilimit}, the 4-D classification is actually obtained by Ricci flow on the \textbf{Positive Isotropic Curvature (PIC)}. Thus, we can derive the corollary of the structure of 4-D PIC orbifolds.

\begin{proof}[Proof of Corollary \ref{isotropy}]
By \cite{ChenZhu}, a 4-D orbifold admits a positive isotropic curvature metric if and only if it admits an LCF metric with positive scalar curvature. Since in 4-D, nonnegative scalar curvature implies $\dim_{\mathcal{H}}(\Lambda)<1$, the corollary follows from Theorem \ref{classification}.
\end{proof}

In particular, the non-trivial ALE ends of an orientable ALE space appear in pairs. 
An example can be constructed as follows: on $\mathbb{S}^n/\Gamma$, where $\Gamma\leq {\rm{O}}(n)$ has two fixed points, $s$ and $-s$, with the standard spherical metric. Consider the Green's function metric blown up at $s$ and $-s$, the resulting manifold $(M, G^{\frac{4}{n-2}}g_{\mathbb{S}^n})$ is diffeomorphic to the standard Schwarzschild metric modulo $\Gamma$. We call such a manifold the \textbf{Schwarzschild ALE} manifold. 

In the non-orientable case, we can easily construct an LCF manifold with one end:

\begin{example}\label{one end ALE}
    If the dimension $n$ is even, then the antipodal map is an orientation-reversing map. Then $\mathbb{S}^n/\{-Id\}$ is non-orientable. Consider the universal cover $\tilde{M}\cong \mathbb{S}^n\setminus\{x,-x\}$, blown up at $x$ and $-x$, then $\overline{M}/\Gamma$ is a Schwarzschild ALE manifold. If $\Gamma$ contains no $\mathbb{Z}_2$-rotation, we can further do a quotient (otherwise, we fix the equator). Then $(M,g)\cong (\mathbb{R}P^n\setminus\{p\},g_{\mathbb{R}P^n})/\Gamma$, which is an LCF, ALE manifold with one end. But it is not orientable. Such a one-end, non-orientable ALE is illustrated in Figure \ref{fig:one end ALE}. We call such a metric a \textbf{non-orientable Schwarzschild ALE}.
\end{example}

\begin{figure}[t]
    \centering
    \resizebox{1\textwidth}{!}{\begin{circuitikz}
\tikzstyle{every node}=[font=\Huge]

\draw [color={rgb,255:red,4; green,51; blue,255}, line width=2pt] (9.3,-3.2) -- (9.3,-3.2);
\draw [line width=1pt] (-11.8,11.4) .. controls (-16.5,4.6) and (-16.3,-0.7) .. (-11.8,-6.9);
\draw [line width=1pt] (-11.8,11.4) .. controls (-7.3,4.3) and (-8.3,-0.7) .. (-11.8,-6.9);
\draw [line width=1pt] (-15.3,2.1) .. controls (-13.3,0.6) and (-10.5,0.9) .. (-8.8,2.1);
\draw [line width=1pt, dashed] (-15.3,2.1) .. controls (-13,3.3) and (-10.5,3.3) .. (-8.8,2.1);

\draw [<->, >=Stealth] (-7.8,9.8) .. controls (-2,3.1) and (-2,2.4) .. (-7.8,-5.4)
  node[pos=0.5, fill=white]{$\sigma$};
\draw [->, >=Stealth] (-7.3,11.4) -- (3.8,11.4) node[pos=0.5, fill=white]{Quotient by $\sigma$};

\draw [line width=1pt] (5.3,10.1) .. controls (3.0,7.8) and (2.0,5.4) .. (2.0,1.9);
\draw [line width=1pt] (5.3,10.1) .. controls (7.5,7.3) and (8.8,5.9) .. (8.5,1.9);
\draw [line width=1pt] (2.0,1.9) .. controls (4.5,0.6) and (6.3,0.9) .. (8.5,1.9);
\draw [line width=1pt, dashed] (2.0,1.9) .. controls (4.5,3.1) and (6.3,3.1) .. (8.5,1.9);

\draw [line width=1pt] (-12.8,6.6) .. controls (-12.0,6.1) and (-11.5,6.4) .. (-10.8,6.6);
\draw [line width=1pt] (-13.0,-2.9) .. controls (-12.3,-3.4) and (-11.8,-3.4) .. (-11.0,-2.9);
\draw [line width=1pt] (4.3,5.3) .. controls (5.0,5.1) and (5.5,5.1) .. (6.3,5.3);

\draw [->, >=Stealth] (5.3,0.6) -- (5.3,-4.6) node[pos=0.5, fill=white]{Blow up};

\draw [line width=1pt] (2.3,-11.4) .. controls (4.8,-12.4) and (6.5,-12.4) .. (8.8,-11.4);
\draw [line width=1pt, dashed] (2.3,-11.4) .. controls (4.8,-10.6) and (6.5,-10.6) .. (8.8,-11.4);
\draw [line width=1pt] (2.3,-11.4) .. controls (4.0,-9.1) and (5.5,-8.1) .. (0.8,-5.6);
\draw [line width=1pt] (8.8,-11.4) .. controls (6.8,-8.9) and (5.0,-8.4) .. (9.5,-5.4);
\draw [line width=1pt, dashed] (3.3,-7.1) .. controls (5.3,-6.6) and (5.3,-6.6) .. (7.3,-7.1);
\draw [line width=1pt] (3.3,-7.1) .. controls (5.3,-7.6) and (5.8,-7.6) .. (7.3,-7.1);
\draw [line width=1pt] (4.5,-9.6) .. controls (5.3,-9.8) and (5.3,-9.8) .. (6.0,-9.6);
\draw [line width=1pt] (2.5,-6.6) .. controls (5.3,-7.1) and (5.5,-7.1) .. (7.8,-6.6);
\draw [line width=1pt, dashed] (2.5,-6.6) .. controls (5.0,-6.1) and (6.0,-6.1) .. (7.8,-6.6);

\node [font=\Huge] at (-11.0,-8.4) {$(M,g)$};
\node [font=\Huge] at (11.8,0.1) {$(M/\sigma, g)$};
\node [font=\Huge] at (13.3,-12.9) {$(M/\sigma \setminus \{o\}, G^{\frac{4}{n-2}} g)$};

\end{circuitikz}}
    \caption{The construction of non-orientable Schwarzschild ALE: We start with $(M,g) = (\mathbb{S}^n/\Gamma, g_{\mathbb{S}^n})$. To construct the one-end ALE manifold, we first do the quotient by $\sigma = -\operatorname{Id}$, then, we can blow up the orbifold singularity by the conformal Green's function.}
    \label{fig:one end ALE}
\end{figure}
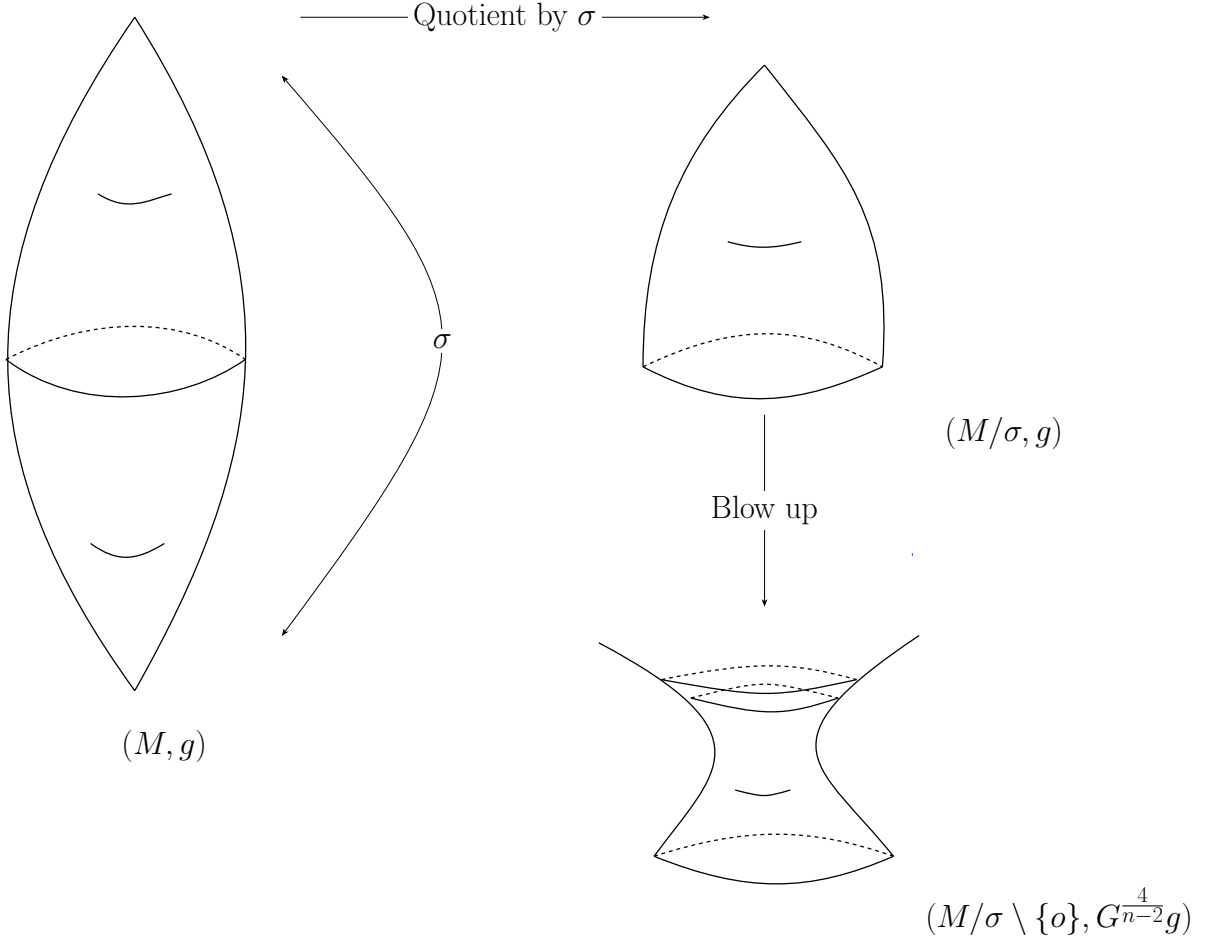

Note that the same construction does not hold when $n$ is odd. The only non-trivial $\Gamma$ is the $\mathbb{Z}_2$ action, then the resulting quotient contains edge singularities.

To end this section, we think that this 'pair-of-ends' phenomenon holds even in the presence of the parabolic element. We conjecture that

\begin{conjecture}
    When $n\geq 5$, for an orientable LCF, nonnegative scalar curvature ALE manifold $(M,g)$, the number of non-trivial ALE ends is even, and they occur in orientation-reversing conjugate pairs.
\end{conjecture}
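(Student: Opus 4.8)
The plan is to reduce the conjecture, for all $n\geq 5$, to the parabolic-free situation already settled by Corollary \ref{orientable ALE end}. Compactify $(M,g)$ to the good Kleinian orbifold $M'=\overline{M}/G$, where $\overline{M}=\mathbb{S}^n\setminus\Lambda$, $G=\rho(\pi_1(M))$, and $\Lambda$ is the limit set; by Theorem \ref{positive yamabe}, Proposition 4.7 of \cite{SYconformal} and Corollary 3.4 of \cite{Nayatani} one has $\dim_{\mathcal{H}}(\Lambda)<\tfrac{n-2}{2}$. Each non-trivial ALE end of $M$ corresponds to a $G$-orbit of a point of $I$ carrying a non-trivial finite stabilizer $\Gamma$, and by Lemma \ref{local group} we may conjugate $\Gamma$ into ${\rm O}(n)$ so that it fixes exactly one antipodal pair $\{x,-x\}\subset\mathbb{S}^n$ with isolated fixed points. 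A point of $\Omega=\mathbb{S}^n\setminus\Lambda$ with non-trivial finite stabilizer projects to an orbifold point of $M'$, so, being the compactification of an ALE end, it must lie in $I$; hence the end at $x$ is matched with a second non-trivial end at $-x$ exactly when $-x\in\Omega$. Consequently the whole statement reduces to the single claim that $-x\notin\Lambda$ for every such $x$: granting this, $\{x,-x\}\mapsto\{-x,x\}$ is a fixed-point-free involution on the set of non-trivial ends, giving an even number, and the orientation-reversing nature of the pairing when $n$ is even is exactly the count carried out in the Appendix (Theorem \ref{orbifold points num}).

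My primary approach would be to exclude parabolics from $G$ altogether. If $\gamma\in G$ were parabolic it would fix a single point $p\in\Lambda$, and the maximal parabolic subgroup $P$ stabilizing $p$ would be a crystallographic group of some rank $k\geq 1$ acting on a horosphere, so that a neighbourhood of $p$ in $\Omega/G$ would be a cusp: an end modelled on a flat $(n-1)$-orbifold with strictly sub-Euclidean volume growth. But an ALE manifold has, by definition, only ends of Euclidean volume growth $\sim r^n$, and $M'=\Omega/G$ is compact. I would make this dichotomy quantitative in the scalar-flat Green's-function gauge furnished by Corollary \ref{order of decay}, estimating the volume of the cusp region produced by $P$ and showing it is incompatible with the ALE normalization; equivalently, one shows that the compactification of an LCF, nonnegative-scalar-curvature ALE manifold is automatically convex cocompact. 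Once parabolics are ruled out, Corollary \ref{orientable ALE end} applies verbatim and the conjecture follows.

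Should a clean exclusion of parabolics prove too delicate, the fallback is to argue directly that the antipode $-x$ of an isolated orbifold point $x$ is never a parabolic fixed point: the finite group $\Gamma$ fixes $-x$ and preserves $\Lambda$, hence normalizes the maximal parabolic $P$ at $-x$, and combining the standard lower bound $\dim_{\mathcal{H}}(\Lambda)\geq \tfrac{k}{2}$ for a rank-$k$ bounded parabolic point with $\dim_{\mathcal{H}}(\Lambda)<\tfrac{n-2}{2}$ forces $k\leq n-3$, leaving only the low-rank cusps invariant under $\Gamma$ and antipodal to an orbifold point to be excluded. The main obstacle is precisely this parabolic step: for $n\geq 5$ the admissible Hausdorff dimension is large enough (at least $\tfrac{3}{2}$) to permit cusps of rank up to $n-3$ a priori, so the dimension bound alone does not close the argument as it does for $n=3,4$, where $\tfrac{n-2}{2}\leq 1$ already forbids parabolics and the statement is a theorem. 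The crux is to show that the Euclidean volume growth and conformal rigidity of an LCF, nonnegative-scalar-curvature ALE manifold are genuinely incompatible with any parabolic cusp; I expect this to require either a sharpened decay and volume estimate for the conformal Green's function near a would-be parabolic fixed point, or a classification of the flat cusp cross-sections that could occur, and it is there that the real work lies.
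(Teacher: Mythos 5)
The statement you are asked to prove is stated in the paper as a \emph{conjecture}: the paper itself has no proof of it. What the paper does prove is precisely the parabolic-free case, namely Corollary \ref{orientable ALE end} (via Theorem \ref{orbifold points num} in the Appendix), and it explicitly flags the presence of parabolic elements as the open obstruction --- for $n=3,4$ the bound $\dim_{\mathcal{H}}(\Lambda)<\frac{n-2}{2}\leq 1$ rules parabolics out automatically, which is why the pairing statement is a theorem there, while for $n\geq 5$ it is not. Your proposal reproduces exactly the paper's reduction (compactify by Theorem \ref{positive yamabe}, conjugate the isotropy group into ${\rm O}(n)$ by Lemma \ref{local group}, invoke the antipodal-pairing count of Theorem \ref{orbifold points num}) and then, as you candidly acknowledge, leaves the one genuinely new step --- excluding parabolics, or at least showing the antipode $-x$ of an isolated orbifold point is never a parabolic fixed point --- unproved. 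So this is not a proof but a research plan whose missing step is the entire content of the conjecture.

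Beyond the admitted incompleteness, your primary approach contains a concrete flaw worth naming: the heuristic that a parabolic subgroup forces a cusp end of sub-Euclidean volume growth, contradicting compactness of $M'=\Omega/G$, does not transfer from hyperbolic quotients to conformal quotients. A Kleinian group consisting of parabolics can have a \emph{compact} domain-of-discontinuity quotient: a full-rank translation lattice has $\Lambda$ a single point and $\Omega/G$ a flat torus. That example is excluded here only because its Yamabe invariant vanishes, not by any volume-growth count --- so any exclusion of parabolics must genuinely use positivity of the Yamabe invariant (or the nonnegative scalar curvature of the ALE metric), and neither the Green's-function decay of Corollary \ref{order of decay} nor the dimension bound $\dim_{\mathcal{H}}(\Lambda)<\frac{n-2}{2}$ alone accomplishes this, as your own rank estimate $k\leq n-3$ shows. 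One smaller gap in your reduction: even when $-x\in\Omega$, you must also show that $x$ and $-x$ lie in \emph{distinct} $G$-orbits (otherwise they represent the same end); that is not automatic and is exactly what Case 1 of the Appendix proof establishes, using orientability. Your fallback observation that $\Gamma$ normalizes the maximal parabolic at $-x$ is a reasonable starting point and is in the spirit of Case 2 of the paper's Appendix (which handles hyperbolic fixed points via the discreteness Lemma \ref{hyperbolic fixed point}), but as stated it only constrains the cusp rank and does not close the argument.
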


\section{Application to the moduli space}

\subsection{The oriented moduli space}\label{moduli space application proof}
Now, we consider the moduli space $\mathfrak{M}(n, \mu_0, C_0)$, and $\mathfrak{M}'(n, \mu_0, C_0)$.

From \cite{Aku},\cite{TV2}, $\mathfrak{M}(n, \mu_0, C_0)$ can be compactified under the Gromov-Hausdorff topology by adding the limits of the Gromov-Hausdorff limits. A similar result holds for $\mathfrak{M}'(n, \mu_0, C_0)$. More precisely:
\begin{theorem}[Tian-Viaclovsky]
    Let $\{(M_i,g_i)\}$ be a sequence in $\mathfrak{M}(n,\mu_0, C_0)$, then we have:
    \begin{align*}
        \begin{tikzcd}[ampersand replacement=\&]
         \& (M_i,g_i, p_i) \arrow[r, "GH"] 
         \& (M_\infty, g_\infty, p_\infty)
      \end{tikzcd}
    \end{align*}
    where $(M_\infty, g_\infty, p_\infty)$ is a LCF, $n$-multifold with bounded diameter and a finite set of singularities $S\subseteq M_\infty$. The convergence is smooth away from $S$. 

    Moreover, for every singular point $x\in S$, there exists a sequence $x_i \in M_i$ with curvature $t_i = |\operatorname{Rm}(g_i)|(x_i)\to\infty$ such that the $(M_i, t_i g_i, x_i)$ pointed Gromov-Hausdorff converges to an LCF, scalar-flat, ALE manifold.
\end{theorem}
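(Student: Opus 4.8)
The plan is to run the standard $\varepsilon$-regularity and point-picking scheme for sequences of metrics with bounded $L^{n/2}$ curvature, exploiting the LCF condition to convert the Ricci bound into a full curvature bound. First I would record the uniform geometric controls already noted in the introduction: unit scalar curvature together with the Yamabe constant $\geq \mu_0 > 0$ gives a uniform upper bound on $\operatorname{vol}(M_i, g_i)$ and a uniform Sobolev inequality, hence no local collapsing; non-collapsing plus the volume bound gives a uniform diameter bound (as in \cite{Aku}). Because each $(M_i, g_i)$ is LCF, the Weyl tensor vanishes, so $|\operatorname{Rm}(g_i)| \leq C(n)(|\operatorname{Ric}(g_i)| + |S(g_i)|)$ pointwise, and the hypothesis $\int |\operatorname{Ric}|^{n/2} < C_0$ upgrades to a uniform bound $\int_{M_i} |\operatorname{Rm}(g_i)|^{n/2}\, dvol_{g_i} \leq C_0'$.

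The analytic heart is an $\varepsilon$-regularity statement: there is $\varepsilon_0 = \varepsilon_0(n,\mu_0) > 0$ such that whenever $\int_{B(x,r)} |\operatorname{Rm}|^{n/2} < \varepsilon_0$ one has a pointwise estimate $\sup_{B(x,r/2)} |\operatorname{Rm}| \leq C r^{-2}$. I would obtain this by Moser iteration applied to the elliptic inequality $\Delta |\operatorname{Rm}| \geq -c|\operatorname{Rm}|^2$ (valid in the LCF case, where $\operatorname{Rm}$ is controlled by $\operatorname{Ric}$, which satisfies a second-order equation via the contracted second Bianchi identity and a Simons-type identity), feeding in the uniform Sobolev inequality from the positive Yamabe assumption. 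After passing to a subsequence, the diameter and non-collapsing bounds give a Gromov-Hausdorff limit $M_\infty$; I then define the singular set $S = \{x \in M_\infty : \liminf_i \int_{B(x,r)}|\operatorname{Rm}(g_i)|^{n/2} \geq \varepsilon_0 \text{ for all } r>0\}$. Since the total curvature energy is uniformly bounded, $S$ is finite with $\# S \leq C_0'/\varepsilon_0$.

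Away from $S$, $\varepsilon$-regularity gives uniform local curvature bounds and non-collapsing gives uniform injectivity-radius lower bounds, so the Cheeger-Gromov compactness theorem yields smooth ($C^{k,\alpha}$ in harmonic coordinates) convergence on compact subsets of $M_\infty \setminus S$ to a limiting metric $g_\infty$; vanishing of the Weyl tensor is preserved under this convergence, so $g_\infty$ is LCF. To extend across each $x \in S$ as an orbifold point I would analyze tangent cones: the Sobolev inequality forces any tangent cone to have Euclidean volume growth, and the vanishing-Weyl / finite-energy structure forces it to be a flat cone $\mathbb{R}^n/\Gamma$ with $\Gamma$ finite; a removable-singularity argument for the conformal factor (using that $g_\infty$ is LCF and satisfies the conformal Laplacian equation, as in Theorem \ref{positive yamabe}) then promotes the $C^0$ orbifold chart to a smooth one. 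When several components of the $M_i$ pinch to the same point, I record the limit as a multifold rather than an orbifold, accounting for the identification of finitely many points.

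Finally, at each $x \in S$ I would produce the bubble by point-picking: choose $x_i \to x$ with $t_i := |\operatorname{Rm}(g_i)|(x_i) \to \infty$ maximal in a shrinking neighborhood (Hamilton's lemma), and rescale $\hat{g}_i = t_i g_i$, normalized so that $|\operatorname{Rm}(\hat{g}_i)|(x_i) = 1$ with curvature uniformly bounded on balls of radius $\to \infty$. The rescaled scalar curvature is $t_i^{-1} S(g_i) \to 0$, while non-collapsing and the $L^{n/2}$ energy bound are scale-invariant, so the pointed limit $(\hat{M}_\infty, \hat{g}_\infty, x_\infty)$ is a complete, non-flat, LCF, scalar-flat manifold of finite total curvature and Euclidean volume growth, which forces the ends to be ALE (as in \cite{BKN}, adapted to the scalar-flat LCF setting). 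The main obstacle is the singular-point analysis of the last two steps: proving that the tangent cone is exactly a flat cone $\mathbb{R}^n/\Gamma$ and that the bubble has genuine Euclidean volume growth, hence honest ALE ends rather than merely a complete finite-energy manifold. This is precisely the delicate content established in \cite{TV2} and \cite{TV3}; the $\varepsilon$-regularity and Cheeger-Gromov steps are comparatively routine.
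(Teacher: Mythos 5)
Your proposal is correct and follows the same route as the source: the paper itself gives no proof of this statement, quoting it directly from \cite{Aku}, \cite{TV2}, \cite{TV3}, and your sketch ($\varepsilon$-regularity via Moser iteration with the Sobolev bound from positive Yamabe, finiteness of the singular set from the scale-invariant $L^{n/2}$ energy, Cheeger--Gromov convergence off $S$, removable-singularity/multifold analysis at the singular points, and Hamilton-type point-picking for the scalar-flat bubble) is exactly the Tian--Viaclovsky strategy, down to the $\varepsilon$-regularity lemma the paper reproduces later as Lemma \ref{epsilon regularity}. You also correctly identify that the genuinely hard steps --- flatness of the tangent cone and Euclidean volume growth forcing honest ALE ends --- are the content of \cite{TV2} and \cite{TV3} rather than routine consequences.
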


\begin{remark}
    If we assume $\{(M_i,g_i)\}$ to be orientable, then the orientation of the sequence induces an orientation of the ALE limit, i.e. the limiting ALE manifold is orientable.
\end{remark}

If $n = 3,4$, then, combining Remark \ref{n=3,4} and Corollary \ref{orientable ALE end}, we have the following structure theorem for the multifold limit $(M_o, g_o)$:

\begin{theorem}\label{GH limit}
    Let $n = 3,4$. Let $(M^n_o, g_o)$ be a Riemannian multifold that is a limit of a sequence of closed, orientable, LCF manifolds $(M_i^n, g_i) \in \mathfrak{M}'(n, \mu_0, C_0)$ converging in the above sense, and $S = \{x_1,...,x_k\}$ be its singular set. Assume that the tangent cone for $x\in S$ is:
    \begin{align*}
        T_x M_o = \coprod_{j=1}^m C(\mathbb{S}^{n-1}/\Gamma_j)
    \end{align*}
    Then, the number of non-trivial cones in $T_x M_o$ is even. In particular, when $n =3$, $T_x M_o = \coprod_{j=1}^m \mathbb{R}^n$.
\end{theorem}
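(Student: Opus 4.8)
The plan is to reduce the statement to the structure of the rescaled ALE bubble produced at the singular point $x$, and then invoke Corollary \ref{orientable ALE end}. By the Tian--Viaclovsky theorem quoted above, to each $x \in S$ we may attach a sequence $x_i \in M_i$ with $t_i = |\operatorname{Rm}(g_i)|(x_i) \to \infty$ such that $(M_i, t_i g_i, x_i)$ converges in the pointed Gromov--Hausdorff sense to an LCF, scalar-flat ALE manifold $(\hat{M}_\infty, \hat{g}_\infty)$. Since the $(M_i, g_i)$ are orientable, the orientations pass to the limit, so $\hat{M}_\infty$ is an orientable, LCF ALE manifold of nonnegative (indeed zero) scalar curvature. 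First I would make precise the correspondence between the ends of $\hat{M}_\infty$ and the cones appearing in the tangent cone $T_x M_o$: the blow-up at $x$ opens up the multifold singularity so that each sheet $C(\mathbb{S}^{n-1}/\Gamma_j)$ of $T_x M_o$ is the asymptotic model of exactly one ALE end of $\hat{M}_\infty$, the group of that end being $\Gamma_j$. In particular, the non-trivial cones in $T_x M_o$ are in bijection with the ALE ends of $\hat{M}_\infty$ having non-trivial group.

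With this correspondence in hand, the count of non-trivial cones becomes a count of non-trivial ALE ends of $\hat{M}_\infty$. To apply Corollary \ref{orientable ALE end} I must check its hypotheses: $\hat{M}_\infty$ is orientable (just noted) and $\pi_1(\hat{M}_\infty)$ contains no parabolic element. For the latter, I would use Remark \ref{n=3,4}: since $n = 3, 4$ and $\hat{M}_\infty$ has nonnegative scalar curvature, the limit set satisfies $\dim_\mathcal{H}(\Lambda) < 1$, and as in the proof of Theorem \ref{classification} this rules out parabolic elements. Corollary \ref{orientable ALE end} then gives that the ALE ends of $\hat{M}_\infty$ with non-trivial group appear in pairs; in particular their number is even. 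Translating back through the correspondence, the number of non-trivial cones in $T_x M_o$ is even.

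For the strengthening when $n = 3$, I would argue that there are no non-trivial ends at all. An end of the orientable manifold $\hat{M}_\infty$ has cross-section $\mathbb{S}^{2}/\Gamma_j$, which must be orientable; but the only finite subgroup of $\operatorname{O}(3)$ acting freely on $\mathbb{S}^2$ with orientable quotient is trivial, since a nontrivial free quotient is $\mathbb{RP}^2$, coming from the orientation-reversing antipodal map $-\operatorname{Id}$, which injects into $\pi_1(\hat{M}_\infty)$ by Theorem \ref{group injective} and thereby contradicts orientability. Hence every $\Gamma_j$ is trivial and $T_x M_o = \coprod_{j=1}^m \mathbb{R}^3$.

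The main obstacle I expect is the first step: rigorously identifying the ends of the rescaled ALE limit with the sheets of the tangent cone, with matching groups. This requires controlling the pointed Gromov--Hausdorff and smooth convergence on the annular neck regions interpolating between the scale of the bubble and the scale of $M_o$, and verifying that no ends are created or lost in the limit and that the asymptotic isotropy at each end agrees with the cone group $\Gamma_j$. Once this bookkeeping is in place, the parity conclusion is a direct consequence of Corollary \ref{orientable ALE end}.
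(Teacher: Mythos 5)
Your overall strategy (reduce to end-counting on a rescaled ALE bubble and apply Corollary \ref{orientable ALE end}) is the same as the paper's, and your $n=3$ argument is essentially the paper's (orientation-reversing $\mathbb{Z}_2$ plus Theorem \ref{group injective}). But there is a genuine gap in the step you set aside as ``bookkeeping'': the identification of the cones of $T_x M_o$ with the ends of a single smooth ALE manifold. The quoted Tian--Viaclovsky theorem rescales at the maximal curvature scale $t_i = |\operatorname{Rm}(g_i)|(x_i)$, and the limit at that scale is indeed a smooth ALE manifold, but its ends correspond to the sheets of $T_x M_o$ only when curvature concentrates at a single scale. In general there is a bubble tree: the first-level rescaling $(M_i, r(i)^{-2}g_i, x_i)$ --- the one whose ends do match the cones of $T_x M_o$ via the $\epsilon$-regularity annulus diffeomorphisms of Lemma \ref{epsilon regularity} --- converges to an ALE \emph{orbifold} $(N,g_N)$, possibly with multifold singular points where deeper bubbles form. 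Corollary \ref{orientable ALE end} does not apply to such an orbifold, and the pairing statement is genuinely false there: the paper's own counterexample is the conformal blow-up of a football orbifold at one of its two singular points, an orientable LCF ALE orbifold with exactly \emph{one} non-trivial end (and a residual orbifold point). So the obstacle is not convergence bookkeeping; your claimed bijection between the ends of the max-curvature-scale ALE manifold and the cones of $T_x M_o$ simply fails when the bubble tree has depth greater than one, and one cannot repair it by gluing the deeper bubbles back in, since the paper notes the resulting glued ALE space may lose non-negative scalar curvature, destroying the embedding Theorem \ref{thm SY} on which Corollary \ref{orientable ALE end} rests.

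The paper closes this gap by an induction from the deepest bubble. The gap Lemma \ref{gap ALE} forces the bubble tree to have finite depth; the deepest bubbles are smooth ALE manifolds, where Corollary \ref{orientable ALE end} applies directly. At each higher level, the limit $(N_{k-1},g_{k-1})$ may be an ALE orbifold with multifold singularities; one conformally blows up \emph{all} its singular points using Lemma \ref{ALE orbifold blow-up} (detaching multifold points as needed), obtaining a smooth scalar-flat ALE manifold $(X_{k-1},g_{k-1})$ whose ends are the original ends of $N_{k-1}$ together with new ends whose groups are the isotropy groups of the blown-up singularities. Corollary \ref{orientable ALE end} makes the total number of non-trivial ends of $X_{k-1}$ even; the new ends are already even in number by the induction hypothesis applied to the deeper bubbles, so the original non-trivial ends of $N_{k-1}$ pair among themselves. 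Iterating to the top level gives the parity of the non-trivial cones in $T_x M_o$. To fix your proposal you must replace the single-bubble bijection with this finite-depth induction (or prove the depth is one, which is false in general).
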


Before the proof, we need the $\epsilon$-regularity lemma from \cite{TV2}. See also \cite{And89}, \cite{BKN} for the Einstein case.

\begin{lemma}[$\epsilon$-regularity]\label{epsilon regularity}
    There exists $C(n, S)$, $\epsilon_0=\epsilon_0(n,S)$ such that if 
  \begin{align*}
      \int_{B_r}|\operatorname{Ric}|^{\frac{n}{2}}dv<\epsilon_0,
  \end{align*}
  
  then
  \begin{align*}
      \operatorname{sup}_{B_{\frac{r}{2}}}|\operatorname{Ric}|\leq \frac{C}{r^2}(\int_{B_r}|\operatorname{Ric}|^{\frac{n}{2}}dv)^{\frac{2}{n}}.
  \end{align*}

  Where $S$ is the Sobolev inequality.
\end{lemma}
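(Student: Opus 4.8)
The plan is to prove the estimate by Moser iteration applied to an elliptic differential inequality satisfied by the curvature, where the locally conformally flat hypothesis is exactly what closes the estimate. First I would reduce to $r=1$: both the hypothesis $\int_{B_r}|\operatorname{Ric}|^{\frac n2}\,dv<\epsilon_0$ and the conclusion $r^2\sup_{B_{r/2}}|\operatorname{Ric}|\le C(\int_{B_r}|\operatorname{Ric}|^{\frac n2}\,dv)^{\frac2n}$ are invariant under the rescaling $g\mapsto r^{-2}g$, so it suffices to bound $\sup_{B_{1/2}}|\operatorname{Ric}|$ by $(\int_{B_1}|\operatorname{Ric}|^{\frac n2}\,dv)^{\frac2n}$. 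Throughout I use that the metrics have constant scalar curvature and a uniform Sobolev inequality with constant $S$, which is precisely the data the statement allows the constants to depend on.

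The key structural step is to produce the differential inequality, and this is where $W=0$ enters. Since the Weyl tensor vanishes, the full curvature tensor is algebraically determined by Ricci and the (constant) scalar curvature,
\begin{align*}
R_{ijkl}=\frac{1}{n-2}\big(R_{ik}g_{jl}-R_{il}g_{jk}+R_{jl}g_{ik}-R_{jk}g_{il}\big)-\frac{S}{(n-1)(n-2)}\big(g_{ik}g_{jl}-g_{il}g_{jk}\big),
\end{align*}
so that $|\operatorname{Rm}|\le C(n)(|\operatorname{Ric}|+|S|)$. Because $S$ is constant, the contracted second Bianchi identity gives $\operatorname{div}\operatorname{Ric}=0$, and $W=0$ forces the Cotton tensor to vanish; consequently the traceless Ricci tensor $E=\operatorname{Ric}-\frac{S}{n}g$ is a Codazzi tensor with $\operatorname{tr}E=0$. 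For such a tensor the Bochner--Weitzenböck identity reads
\begin{align*}
\tfrac12\Delta|E|^2=|\nabla E|^2+\operatorname{Rm}\ast E\ast E,
\end{align*}
the first-derivative-of-trace term dropping out since $\operatorname{tr}E=0$. Substituting the displayed formula for $\operatorname{Rm}$ turns the curvature term into one that is cubic in $E$ (plus a quadratic contribution from the constant $S$), and then Kato's inequality $|\nabla|E||\le|\nabla E|$ yields, with $u=|E|$,
\begin{align*}
\Delta u\ge -c(n)\,u^2-c(n)\,u
\end{align*}
in the weak/barrier sense. The essential point is that $W=0$ is what allows the a priori uncontrolled term $\langle\operatorname{Rm}\ast\operatorname{Rm},\operatorname{Ric}\rangle$ to be dominated by $u^3$; without it $|\operatorname{Rm}|$ cannot be absorbed into $|\operatorname{Ric}|$ and the estimate fails. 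Since $|\operatorname{Ric}|$ and $u=|E|$ differ only by the constant $|S|/\sqrt n$, both the $L^{n/2}$ hypothesis and the desired $L^\infty$ conclusion transfer between them.

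With the inequality $\Delta u\ge -c(u^2+u)$ in hand, the remainder is a standard Moser iteration. Testing against $\eta^2u^{q-1}$ for a cutoff $\eta$ and integrating by parts produces a reverse-Poincaré estimate in which the dangerous term is $\int\eta^2u^{q+1}$; by Hölder this is controlled by $(\int_{B_1}u^{\frac n2})^{\frac2n}\big(\int(\eta u^{q/2})^{\frac{2n}{n-2}}\big)^{\frac{n-2}{n}}$. Feeding the last factor into the Sobolev inequality with constant $S$, the smallness hypothesis $\int_{B_1}u^{\frac n2}<\epsilon_0$ lets me absorb the Sobolev term into the left-hand side, which is exactly why $\epsilon_0$ must be chosen depending on $n$ and $S$. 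Iterating over exponents $q_k\to\infty$ on a nested family of balls shrinking down to $B_{1/2}$ then gives
\begin{align*}
\sup_{B_{1/2}}u\le C(n,S)\Big(\int_{B_1}u^{\frac n2}\,dv\Big)^{\frac2n},
\end{align*}
and undoing the rescaling recovers the stated bound. The main obstacle is the first step: deriving a genuinely cubic (rather than $|\operatorname{Rm}|$-linear) nonlinearity in the Bochner inequality, which is precisely where the LCF condition is used and where the constant-scalar-curvature bookkeeping must be handled with care. Once that inequality and the uniform Sobolev constant are in place, the iteration is routine.
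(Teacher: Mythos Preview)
The paper does not give its own proof of this lemma; it is simply quoted from \cite{TV2}, with \cite{And89} and \cite{BKN} cited for the Einstein analogue. Your outline is exactly the standard argument behind those references in the LCF, constant-scalar-curvature setting: $W=0$ forces the Cotton tensor to vanish, so with $S$ constant the traceless Ricci $E$ is a trace-free Codazzi tensor, the Weitzenb\"ock identity collapses to $\Delta E=\operatorname{Rm}\ast E$ with $\operatorname{Rm}$ expressible algebraically through $E$ and $S$, Kato gives $\Delta|E|\ge -c|E|^{2}-c|S|\,|E|$, and Moser iteration closes because the smallness of $\int|E|^{n/2}$ lets the critical cubic term be absorbed against the Sobolev constant. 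The bookkeeping issue you flag is real but harmless: after rescaling to $r=1$ the linear coefficient becomes $r^{2}|S|$, which is bounded in every application made in the paper (where $S\equiv 1$ and either $r\le r_{\infty}$ is fixed or $r\to 0$ in the blow-up), and a bounded zeroth-order term is subcritical in the iteration and does not affect the form of the estimate.
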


On the ALE manifold, we have the following gap lemma by letting $r\to \infty$.

\begin{lemma}[Gap lemma for ALE manifolds]\label{gap ALE}
Let $(M,g)$ be an LCF, scalar-flat, ALE manifold with bounded Sobolev constant $S$. There exists $C(n, S)$, $\epsilon_0=\epsilon_0(n,S)$ such that if 
  \begin{align*}
      \int_M|\operatorname{Ric}|^{\frac{n}{2}}dv<\epsilon_0.
  \end{align*}
Then $(M,g)\cong (\mathbb{R}^n , g_{\mathbb{R}^n})$.
\end{lemma}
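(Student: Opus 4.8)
The plan is to establish the lemma in three stages: (i) promote the smallness of the total curvature to the pointwise identity $\operatorname{Ric}\equiv 0$ by exploiting the scale-invariance of the $\epsilon$-regularity estimate and sending the radius to infinity; (ii) use the LCF hypothesis to upgrade Ricci-flatness to flatness; and (iii) identify a complete flat manifold of Euclidean volume growth with $\mathbb{R}^n$.

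For stage (i), I would choose $\epsilon_0=\epsilon_0(n,S)$ as in Lemma \ref{epsilon regularity}, so that the hypothesis $\int_M|\operatorname{Ric}|^{n/2}\,dv<\epsilon_0$ applies. For any $x\in M$ and any $r>0$ we have $\int_{B_r(x)}|\operatorname{Ric}|^{n/2}\,dv\le\int_M|\operatorname{Ric}|^{n/2}\,dv<\epsilon_0$. Because the ALE manifold carries a uniform Sobolev inequality with constant $S$, Lemma \ref{epsilon regularity} applies on every such ball and yields
\[
  \sup_{B_{r/2}(x)}|\operatorname{Ric}|\le\frac{C}{r^2}\Big(\int_{B_r(x)}|\operatorname{Ric}|^{n/2}\,dv\Big)^{2/n}\le\frac{C\,\epsilon_0^{2/n}}{r^2}.
\]
Fixing $x$ and letting $r\to\infty$ forces $|\operatorname{Ric}|(x)=0$; as $x$ is arbitrary, $(M,g)$ is Ricci-flat.

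For stage (ii), I would invoke the orthogonal decomposition of the Riemann tensor into its Weyl part and a Schouten (Kulkarni--Nomizu) part. The LCF assumption makes the Weyl tensor vanish, while Ricci-flatness (which in particular gives $S=\operatorname{tr}\operatorname{Ric}=0$, consistent with the scalar-flat hypothesis) makes the Schouten tensor vanish. Hence the full curvature tensor is zero and $(M,g)$ is flat. For stage (iii), $(M,g)$ is now a complete manifold of nonnegative (indeed zero) sectional curvature with Euclidean volume growth coming from the ALE end, so by the Cheeger--Gromoll soul theorem its soul is a point and $M$ is diffeomorphic to $\mathbb{R}^n$; being flat, complete, and simply connected, it is isometric to $(\mathbb{R}^n,g_{\mathbb{R}^n})$.

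The main obstacle I anticipate lies entirely in stage (i): one must guarantee that the constants $C$ and $\epsilon_0$ in the $\epsilon$-regularity estimate remain fixed as $r\to\infty$. This is precisely where the globally bounded Sobolev constant of the ALE manifold is essential, since it forces those constants to depend only on $n$ and $S$ rather than on the particular ball; without such uniformity the passage $r\to\infty$ could not be justified. The remaining two stages are standard once $\operatorname{Ric}\equiv 0$ is secured.
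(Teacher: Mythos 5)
Your proposal is correct and takes essentially the same route the paper intends: the paper derives this gap lemma from Lemma \ref{epsilon regularity} precisely ``by letting $r\to\infty$'' to force $\operatorname{Ric}\equiv 0$, and your stages (ii)--(iii) merely flesh out the standard rigidity it leaves implicit. One small caution on stage (iii): the Cheeger--Gromoll soul theorem by itself does not make the soul a point, so you should either rule out a positive-dimensional soul via tube-volume comparison against the Euclidean volume growth, or argue directly that a complete flat manifold is $\mathbb{R}^n/\Gamma$ with $\Gamma$ finite by the volume growth, and a finite group acting freely by isometries on $\mathbb{R}^n$ must be trivial since it fixes the center of mass of any orbit.
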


To deal with the possible ALE orbifolds, we need the following conformal blow-up lemma:

\begin{lemma}\label{ALE orbifold blow-up}
    Let $(M,g)$ be an LCF, nonnegative scalar curvature, ALE orbifold, with finitely many isolated orbifold singularities, $\{q_i\}$. Then, there exists a harmonic conformal factor, $H$, such that $(M\setminus\{q_i\}, H^{\frac{4}{n-2}}g)$ is an ALE manifold of nonnegative scalar curvature.
\end{lemma}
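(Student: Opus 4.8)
The plan is to build $H$ by superposing conformal Green's functions, one at each orbifold point, onto the constant function $1$, and then to read off the ALE asymptotics from the local expansion of these Green's functions. Write $L_g = -\Delta_g + a(n) S(g)$ for the conformal Laplacian, $a(n) = \frac{n-2}{4(n-1)}$, and recall that under $\tilde g = H^{\frac{4}{n-2}} g$ one has $L_g H = a(n)\, S(\tilde g)\, H^{\frac{n+2}{n-2}}$. The idea is to take $H = 1 + \sum_i G_i$, where $G_i > 0$ solves $L_g G_i = \delta_{q_i}$ and decays at the ALE ends. Away from the poles $L_g H = L_g 1 = a(n) S(g)$, so the new scalar curvature is $S(\tilde g) = S(g)\, H^{-\frac{n+2}{n-2}} \geq 0$; this is precisely why the conclusion is nonnegative scalar curvature rather than exact scalar-flatness, and positivity of $H$ is automatic from $H \geq 1$.

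First I would establish the existence and positivity of each decaying Green's function $G_i$. Because $(M,g)$ is ALE, $L_g$ is Fredholm between the weighted Hölder spaces adapted to the ALE ends, and (by the Remark following Theorem \ref{harmonic on ALE}) the mapping theory is the same as for the flat Laplacian since $S = O(r^{-2-\tau})$. Solving $L_g u_i = -L_g(\chi_i r_i^{2-n})$ with $\chi_i$ a cutoff near $q_i$ in a linear orbifold chart and $r_i$ the distance to $q_i$, I obtain $G_i = \chi_i r_i^{2-n} + u_i$ with the prescribed singularity and decay at infinity; the relevant weights are non-exceptional because the exceptional set of the Laplacian is discrete. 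Positivity then follows from the maximum principle: on the set $\{G_i < 0\}$ the function solves $-\Delta G_i + a(n) S\, G_i = 0$ with $a(n) S \geq 0$, and since $G_i \to +\infty$ at $q_i$ and $G_i \to 0$ at the ALE ends, the strong minimum principle forbids a negative interior minimum, so $G_i > 0$. Coercivity and the absence of a kernel obstructing solvability are guaranteed by the positive orbifold Yamabe invariant of the conformal compactification of $(M,g)$, furnished by Theorem \ref{positive yamabe} together with Remark \ref{remark of order of decay}.

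Next I would pin down the two asymptotic regimes. At each original ALE end, $H = 1 + \sum_i G_i$ satisfies $L_g H = a(n) S = O(r^{-2-\tau})$ and is bounded, so Theorem \ref{harmonic on ALE} (applied to $L_g$) gives $G_i \to 0$ and hence $H \to 1$; after the conformal change these ends persist as ALE ends of $\tilde g$ of the same order. At each pole $q_i$, the LCF hypothesis allows me to invoke Lemma 6.4 of \cite{lee1987yamabe} in conformal normal coordinates, giving the log-free expansion $G_i = r_i^{2-n} + A_i + O(r_i)$; the remaining $G_j$ ($j \neq i$) and the constant $1$ are smooth near $q_i$, so $H = r_i^{2-n} + (\text{const}) + O(r_i)$ there. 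Passing to the inverted coordinate $x' = x/|x|^2$, the same computation as in Corollary \ref{order of decay} shows that $H^{\frac{4}{n-2}} g$ is asymptotic to the flat cone $\mathbb{R}^n/\Gamma_{q_i}$ to order $n-2$, so $q_i$ opens up into a new ALE end.

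Putting these together, $(M \setminus \{q_i\}, H^{\frac{4}{n-2}} g)$ is a complete manifold with finitely many ALE ends --- the original ones together with one new end at each $q_i$ --- and with $S(\tilde g) = S(g) H^{-\frac{n+2}{n-2}} \geq 0$, which is the assertion. I expect the main obstacle to be the existence-and-positivity step for the Green's functions on this noncompact orbifold: noncompactness forces the weighted-space Fredholm framework both to solve the equation and to control the decay at the ALE ends, while the mere nonnegativity of $S$ (rather than strict positivity) makes positivity of $G_i$ delicate and is exactly where the positive Yamabe invariant of the compactification enters. By contrast, the two asymptotic computations are routine given Theorem \ref{harmonic on ALE}, Corollary \ref{order of decay}, and the log-free LCF expansion.
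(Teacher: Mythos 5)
Your construction succeeds in proving the geometric conclusion, but by a genuinely different (and heavier) route than the paper, and with two defects worth flagging. The paper's proof is essentially a citation: it takes $H$ built from positive Green's functions of the ordinary Laplace--Beltrami operator, which exist on ALE spaces by \cite{grigor1999analytic} (such spaces are nonparabolic for $n\geq 3$), or alternatively uses the harmonic function of Lemma 2.1 of \cite{ju2023conformally}. The word \emph{harmonic} in the statement is literal, and this choice is what makes it true: if $\Delta_g H=0$ away from the poles, then $L_gH=a(n)SH$ gives $S(H^{\frac{4}{n-2}}g)=S\,H^{-\frac{4}{n-2}}\geq 0$, so nonnegativity of scalar curvature is free, with no Fredholm theory and with positivity of the minimal Green's function automatic. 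Your $H=1+\sum_i G_i$, built from Green's functions of the conformal Laplacian, instead satisfies $-\Delta H=a(n)S(1-H)\leq 0$ away from the poles (since $H\geq 1$), i.e.\ it is subharmonic, not harmonic, unless $S\equiv 0$. So strictly speaking you prove a variant of the lemma; since the paper only ever uses the geometric conclusion downstream, this is mostly cosmetic, and replacing $L_g$-Green's functions by $\Delta_g$-Green's functions makes your maximum-principle and asymptotic-expansion steps go through essentially verbatim while recovering the literal statement.

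The one genuine logical problem is your solvability step: you justify coercivity and the absence of kernel by invoking Theorem \ref{positive yamabe} together with Remark \ref{remark of order of decay}, but that remark states explicitly that extending the compactification result to ALE \emph{orbifolds} relies on Lemma \ref{ALE orbifold blow-up} itself --- the very lemma being proved --- so as written the argument is circular within the paper's logical structure. Fortunately the appeal is unnecessary: since $a(n)S\geq 0$, any solution of $L_gu=0$ decaying at all ends vanishes identically by the maximum principle, so the kernel of $L_g$ on the decaying weighted spaces is trivial, and by the usual duality between the weights $\delta$ and $2-n-\delta$ so is the cokernel; no Yamabe-positivity input is needed. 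With that repair, and granting standard elliptic theory on orbifolds (lifting to local uniformizing charts), your Fredholm construction, the minimum-principle positivity of $G_i$, the log-free LCF expansion at the poles, and the inversion computation at the new ends are all sound. One last nit: at the original ends the conformal metric is ALE of order $\min(\tau,\,n-2)$, not necessarily ``the same order'' $\tau$; this is harmless for the conclusion.
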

\begin{proof}
    One way to prove it is to use the existence of the Green's function on ALE spaces, see \cite{grigor1999analytic}. Since $(M,g)$ is of nonnegative scalar curvature, the conformal metric $(M\setminus\{q_i\}, H^{\frac{4}{n-2}}g)$ is also of nonnegative scalar curvature. Another way to prove it is to use the harmonic function constructed in Lemma 2.1 of \cite{ju2023conformally}.
\end{proof}

\begin{proof}[Proof of Theorem \ref{GH limit}]
    If $n = 3,4$, non-negative scalar curvature implies $dim_\mathcal{H}(\Lambda) < 1$, thus, Corollary \ref{orientable ALE end} applies.
    
    We will use the bubble tree scaling argument in \cite{Bubble}. See also \cite{TV2}.

    Choose $x\in S$. We will fix $x$ for later discussion. Let $x_i\in M_i$ be such that $\lim\limits_{i\to\infty} x_i = x$, and $B(x_i, r)$ converges to $B(x,r)$ in the Gromov-Hausdorff sense. Moreover, choose $r_\infty > 0$ small such that:
    \begin{align*}
        \sup_{B(x_i, r_\infty)}|\operatorname{Ric}(g_i)| = |\operatorname{Ric}(g_i)|(x_i) \to \infty
    \end{align*}
    and 
    \begin{align*}
        \int_{B(x,r_\infty)} |\operatorname{Ric}(g_\infty)|^{\frac{n}{2}} dv \leq \frac{\epsilon_0}{2}
    \end{align*}
    where $\epsilon_0$ comes from Lemma \ref{epsilon regularity}. 

    Denote $A(r_1, r_2) = B(x, r_2)\setminus B(x,r_1)$ as the annulus region. Choose $r(i)< r_\infty$ such that:
    \begin{align*}
        \int_{A(r(i), r_\infty)}|\operatorname{Ric}(g_i)|^{\frac{n}{2}} dv = \epsilon_0
    \end{align*}
    By the curvature concentration, $r(i)\to 0$. Thus, for $i$ large enough, $r(i)<r_0$, for any $0<r_0 < r_\infty$.

    Note $(A_i(r_0, r_\infty/2), g_i)\to (A_\infty(r_0, r_\infty/2), g_\infty)$ in the Gromov-Hausdorff sense, for some $r_0 >0$. Then, by Lemma \ref{epsilon regularity}, we have $C^{1,\alpha}$ convergence (actually $C^\infty$ combining with the PDE for Ricci curvature in the LCF case), thus, there exists a diffeomorphism $\psi_i: A_i(r_0, r_\infty/2) \simeq A_\infty(r_0, r_\infty/2)$. Unlike the Einstein case, $A_\infty(r_0, r_\infty/2)$ can have finitely many components corresponding to the decomposition of the tangent cone.

    As in \cite{Bubble} and \cite{TV2}, a nested ALE bubble structure emerges. The blow-up sequence $(M_i, r(i)^{-2}g_i, x_i)$ converges to an ALE orbifold $(N,g_N)$ with possibly finitely many multifold singular points. If $(N,g_N)$ is an ALE manifold, then by Corollary \ref{orientable ALE end}, the non-trivial ALE ends appear in pairs. As shown in \cite{Bubble}, each component in $A_i(r_0, r_\infty/2)$ will be arbitrarily close to a portion of the flat cone $ \mathcal{C}(\mathbb{S}^{n-1}/\Gamma)$ corresponding to the ends. Thus, the non-trivial neck components in $A_i(r_0, r_\infty/2)$ also appear in pairs. By the diffeomorphism, the non-trivial cone in $T_xM_o$ also appears in pairs. It is done. The remaining problem is that when $(N,g_N)$ is an ALE orbifold with multifold singularities, Corollary \ref{orientable ALE end} will fail. A counterexample is the football metric conformal blow-up at one singularity. Moreover, the nested resolved gluing ALE may not have non-negative scalar curvature, and Theorem \ref{thm SY} will fail. Thus, we do not adopt this method. Our argument will be an induction from the deepest bubble.

    Starting from the deepest bubble: Due to Lemma \ref{gap ALE}, the bubble tree will only be of finite depth. Let the depth of the bubble tree be $k$. The rescaling sequence $(M_i, r_k(i)^{-2} g_i, x_{ik})$ pointed $C^{1,\alpha}$ converges to an ALE manifold $(N_k, g_k, \hat{x}_k)$, which is the rescaling sequence corresponding to the singularity $x_{k-1}$ at level $k-1$. Like the above argument, since the deepest bubble $(N_k, g_k)$ is an ALE manifold, by Corollary \ref{orientable ALE end}, the non-trivial cone in $T_{x_{k-1}} N_{k-1}$ appears in pairs. This is true for all the singularities in $(N_{k-1}, g_{k-1}, \hat{x}_{k-1})$, which is the pointed Gromov-Hausdorff limit of $(M_i, r_{k-1}(i)^{-2} g_i, x_{i(k-1)})$. 
    
    We claim that the non-trivial ALE ends in the multifold $(N_{k-1}, g_{k-1})$ also appear in pairs. To see this, consider the conformal blow-up metric at all the singularities $S_{k-1}$. We need to use Lemma \ref{ALE orbifold blow-up}. Note this conformal factor is a finite superposition of conformal factors on all the singularities, and for the multifold point, we can detach the multifold point and consider the conformal factor on each singularity with the same asymptotic. Thus, the conformal factor, $G_{k-1}$, is well-defined, and we have:
    \begin{align*}
        (X_{k-1}, g_{k-1})=(N_{k-1}\setminus S_{k-1}, G_{k-1}^{\frac{4}{n-2}}g_{k-1})
    \end{align*}
    is an ALE manifold. In particular, the resulting ALE manifold is scalar-flat. As a result, Corollary \ref{orientable ALE end} can apply. The non-trivial ALE ends of $(X_{k-1}, g_{k-1})$ appear in pairs. Since all the non-trivial ALE ends which are constructed using conformal blow-up associated with the multifold singularities are already paired due to the last step ALE ends argument, the original ends in $(N_{k-1}, g_{k-1})$ also pair. Thus, an easy induction step will show the same result as the bubble is an ALE manifold. 
\end{proof}

\begin{proof}[Proof of Theorem \ref{moduli space application}]
    Theorem \ref{moduli space application} follows directly from Theorem \ref{GH limit}.
\end{proof}

A corollary is that the football metric $\mathbb{S}^4/\Gamma$ cannot be realized as the Gromov-Hausdorff limit of such a sequence. Anderson \cite{anderson2008survey} asked the question: when an Einstein orbifold can be desingularized by Einstein metrics. It is interesting to understand which orbifolds cannot be approximated by a sequence of Einstein manifolds. Biquard \cite{biquard} has identified certain obstructions, and Ozuch \cite{ozuch2022noncollapsed} has shown that $\mathbb{S}^4/\Gamma$ cannot be approximated by a sequence of Einstein manifolds with Eguchi-Hanson metrics bubbles off. Our result is an orientable LCF analog of the Einstein case.  

\begin{corollary}
\label{c:footnon}
    The football metric $\mathbb{S}^4/\Gamma$ cannot be realized as the Gromov-Hausdorff limit for any sequence of closed, orientable, LCF manifolds $(M^4_i,g_i)\in\mathfrak{M}'(4,\mu_0,C_0)$.
\end{corollary}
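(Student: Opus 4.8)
The plan is to deduce this directly from Theorem \ref{GH limit} (equivalently, from Corollary \ref{moduli space application}) by verifying that the football orbifold satisfies the required hypotheses and then exhibiting the parity obstruction at each of its singular points. First I would record the basic geometry of $\mathbb{S}^4/\Gamma$. By Definition \ref{Def football}, $\Gamma \subseteq {\rm{O}}(4) \subset {\rm{O}}(5)$ fixes a pair of antipodal points $\pm p \in \mathbb{S}^4$ and acts freely elsewhere, so $\mathbb{S}^4/\Gamma$ is a compact LCF orbifold whose singular set is exactly the two points $\{[\pm p]\}$. For the quotient to be orientable (which is necessary if it is to arise as a limit of orientable manifolds) one takes $\Gamma \subseteq {\rm{SO}}(4)$, so that each deck transformation $A \oplus 1$ lies in ${\rm{SO}}(5)$ and hence preserves the orientation of $\mathbb{S}^4 \setminus \{\pm p\}$; this makes $\mathbb{S}^4/\Gamma$ an oriented orbifold in the sense of the introduction. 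Crucially, each of the two singular points is an irreducible orbifold point: no distinct orbifold points are identified there, so its tangent cone is the single flat cone $C(\mathbb{S}^3/\Gamma)$.

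The key point I would then emphasize is the distinction between two different counts. The football has two singular points, which is even and consistent with the ``orientation-reversing conjugate pairs'' statement of Theorem \ref{classification}. However, Theorem \ref{GH limit} constrains a different quantity: at each individual singular point $x$ of a multifold limit, the tangent cone decomposes as $T_x M_o = \coprod_{j=1}^m C(\mathbb{S}^{n-1}/\Gamma_j)$ and the number of non-trivial cones among the $\Gamma_j$ must be even. For the football, at each of the two singular points $[\pm p]$ the tangent cone consists of the single non-trivial cone $C(\mathbb{S}^3/\Gamma)$, so this count equals $1$.

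Thus I would conclude by contradiction: if $\mathbb{S}^4/\Gamma$ were the Gromov-Hausdorff limit of some sequence $(M_i^4, g_i) \in \mathfrak{M}'(4, \mu_0, C_0)$, then since $n=4$ forces $\dim_{\mathcal{H}}(\Lambda) < 1$ (Remark \ref{n=3,4}), Theorem \ref{GH limit} would apply and force the number of non-trivial cones at $[\,p\,]$ to be even, contradicting the value $1$ computed above. Equivalently, $\mathbb{S}^4/\Gamma$ is a closed orientable LCF multifold all of whose singularities are irreducible orbifold points, so it is ruled out immediately by Corollary \ref{moduli space application}. The only real thing to verify --- and hence the main ``obstacle,'' though it is a mild one since Theorem \ref{GH limit} is already in hand --- is that the football's singular points are genuinely \emph{irreducible} single cones rather than multifold identifications, because it is precisely the parity of the non-trivial cones at a single point, not the global count of singular points, that the obstruction controls.
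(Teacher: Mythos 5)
Your proposal is correct and matches the paper's argument: the paper deduces Corollary \ref{c:footnon} directly from Theorem \ref{GH limit} (via Corollary \ref{moduli space application}), exactly as you do, using that each of the football's two singular points is an irreducible orbifold point whose tangent cone is the single non-trivial cone $C(\mathbb{S}^3/\Gamma)$, violating the local even-parity constraint. Your emphasis on the local count at each singular point, rather than the global (even) number of singular points, is precisely the right reading of the obstruction.
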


\subsection{The non-orientable examples}\label{nonorientable construction}
The football orbifolds defined in Definition~\ref{Def football} have exactly 2 orbifold points. There is a generalization that has only 1 orbifold point but is non-orientable. In the following, we assume that $n$ is even.

\begin{definition}[Non-orientable cap]  Let $n$ be even, and $\Gamma \subset \mathrm{SO}(n)$ be a finite subgroup that acts freely on $S^{n-1}$, acting on $\mathbb{R}^{n+1}$ in the first $n$ coordinates, such that the antipodal map on $S^{n-1}$ is not contained in $\Gamma$. Let $\sigma: S^n \rightarrow S^n$ be the antipodal map of $S^n$, and let $\tilde{\Gamma} = \{ \Gamma, \sigma\}$ be the group generated by $\Gamma$ and $\sigma$. Then the non-orientable cap is $S^n/\tilde{\Gamma}$.
\end{definition}

\begin{remark}
\label{r:gfcap} By taking the Green's function metric of the non-orientable cap at the orbifold point, we obtain a non-orientable ALE manifold with one end and group $\Gamma$ at infinity, which is the non-orientable Schwarzschild ALE in Example \ref{one end ALE}. Obviously, since it is non-orientable, this does not contradict Theorem \ref{classification}.
\end{remark}

In contrast to the orientable case, using the non-orientable caps, here we give two constructions showing non-orientable irreducible multifolds can be realized as a non-collapsing limit of non-orientable LCF 4-manifolds. All of the gluing and perturbation techniques are almost identical to those in \cite{joyce2003constant}, so we will omit the details. A slight difference between $\mathfrak{M}(n,\mu_0,C_0)$ is that the construction here is only a constant scalar metric, not necessarily a Yamabe minimizer. But all previous discussions of $\mathfrak{M}(n,\mu_0, C_0)$ also hold for the moduli space of constant scalar metrics with a bounded Sobolev constant and bounded diameter.

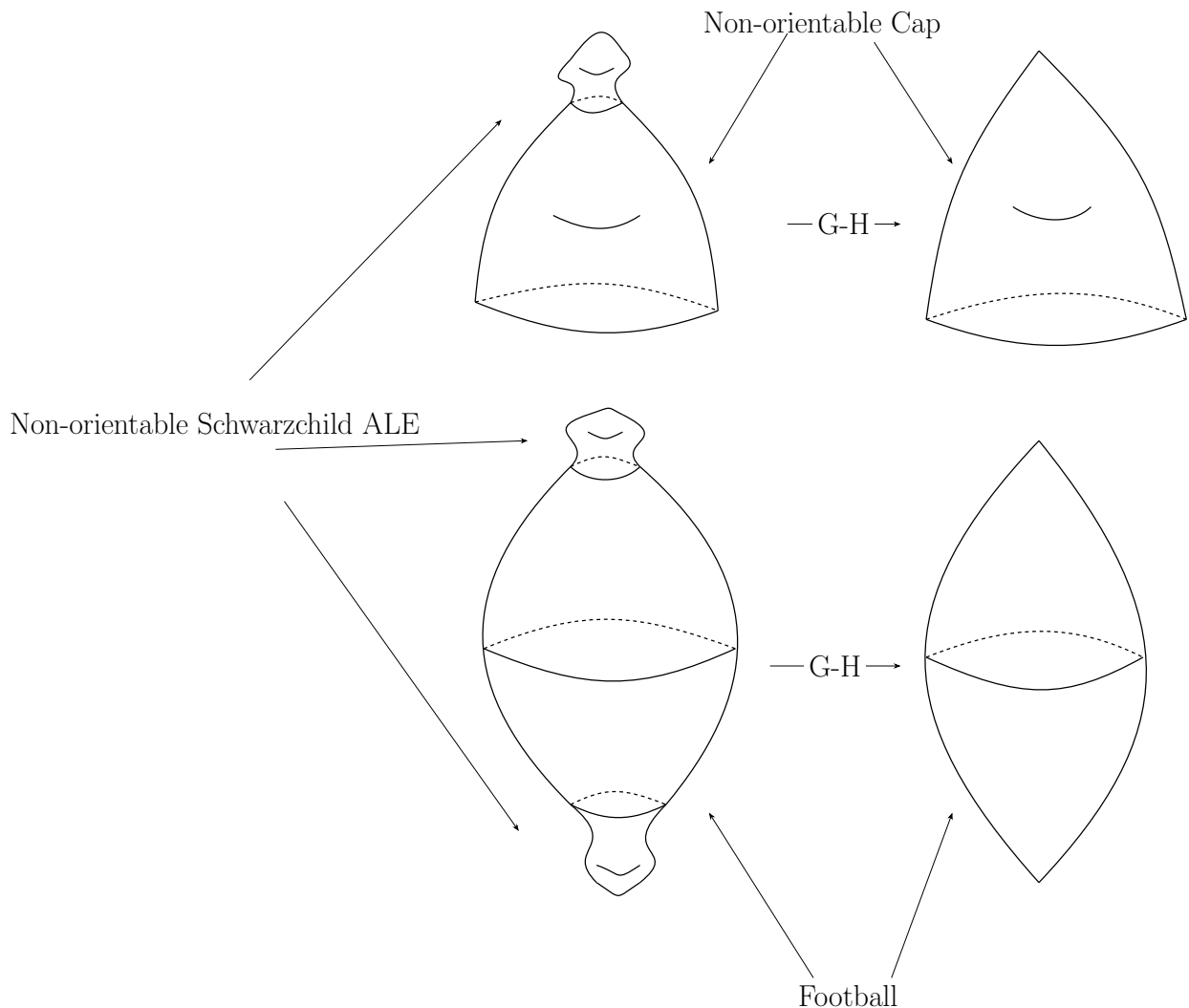
\begin{figure}[t]
    \centering
    \resizebox{1\textwidth}{!}{\begin{circuitikz}
\tikzstyle{every node}=[font=\Huge]

\draw [line width=1pt, short] (-3.75,12) .. controls (-5.5,10.25) and (-6.25,9.25) .. (-6.5,6.25);
\draw [line width=1pt, short] (-2.25,12) .. controls (-0.25,10) and (0.25,9) .. (0.5,6);
\draw [line width=1pt, short] (-6.5,6.25) .. controls (-4,5.25) and (-2.25,5) .. (0.5,6);
\draw [line width=1pt, dashed] (-6.5,6.25) .. controls (-3.5,7) and (-2.25,7) .. (0.5,6);
\draw [line width=1pt, short] (-3.75,12) .. controls (-3.25,11.5) and (-2.75,11.75) .. (-2.25,12);
\draw [line width=1pt, dashed] (-3.75,12) .. controls (-3,12.25) and (-2.75,12.25) .. (-2.25,12);
\draw [line width=1pt, short] (-3.75,12) .. controls (-3.25,13) and (-4.75,12.25) .. (-3.75,13.25);
\draw [line width=1pt, short] (-2.25,12) .. controls (-3,13) and (-1.5,12.75) .. (-2.25,13.5);
\draw [line width=1pt, short] (-3.75,13.25) .. controls (-3,14.25) and (-2.75,14.25) .. (-2.25,13.5);
\draw [line width=1pt, short] (-3.5,13) .. controls (-3,12.75) and (-3,12.75) .. (-2.5,13);
\draw [line width=1pt, short] (6.5,5.75) .. controls (9.25,4.75) and (11.25,4.75) .. (14,5.75);
\draw [line width=1pt, short] (6.5,5.75) .. controls (7,9.25) and (7.5,10.5) .. (9.75,13.5);
\draw [line width=1pt, short] (9.75,13.5) .. controls (12.75,10.5) and (13.25,9.25) .. (14,5.75);
\draw [line width=1pt, dashed] (6.5,5.75) .. controls (9.5,6.75) and (11.5,6.75) .. (14,5.75);
\draw [line width=1pt, short] (-4.25,8.75) .. controls (-3.25,8.25) and (-2.5,8.25) .. (-1.75,8.75);
\draw [line width=1pt, short] (9,9) .. controls (9.75,8.5) and (10.75,8.5) .. (11.25,9);
\draw [line width=1pt, short] (-3.75,1.5) .. controls (-7.25,-2) and (-7,-5) .. (-3.75,-8.25);
\draw [line width=1pt, short] (-1.75,1.5) .. controls (2.25,-2) and (1.5,-5.25) .. (-1,-8.25);
\draw [line width=1pt, short] (-3.75,1.5) .. controls (-3.25,1) and (-2.25,1) .. (-1.75,1.5);
\draw [line width=1pt, short] (-3.75,-8.25) .. controls (-2.75,-8.75) and (-2,-8.75) .. (-1,-8.25);
\draw [line width=1pt, dashed] (-3.75,1.5) .. controls (-2.75,2) and (-2.5,1.75) .. (-1.75,1.5);
\draw [line width=1pt, dashed] (-3.75,-8.25) .. controls (-2.75,-7.75) and (-2.25,-7.75) .. (-1,-8.25);
\draw [line width=1pt, short] (-3.75,1.5) .. controls (-3,2.25) and (-4.75,2.5) .. (-3.25,3);
\draw [line width=1pt, short] (-1.75,1.5) .. controls (-2.5,2.25) and (-0.75,2.25) .. (-2.25,3);
\draw [line width=1pt, short] (-3.25,3) .. controls (-2.5,3.25) and (-2.75,3.25) .. (-2.25,3);
\draw [line width=1pt, short] (-3.25,2.5) .. controls (-2.75,2.25) and (-2.75,2.25) .. (-2.25,2.5);
\draw [line width=1pt, short] (-3.75,-8.25) .. controls (-2.25,-9.5) and (-4,-9.5) .. (-3,-10.5);
\draw [line width=1pt, short] (-1,-8.25) .. controls (-2.5,-9.75) and (-0.5,-9.5) .. (-1.75,-10.5);
\draw [line width=1pt, short] (-3,-10.5) .. controls (-2.25,-11) and (-2.5,-11) .. (-1.75,-10.5);
\draw [line width=1pt, short] (-3,-10) .. controls (-2.25,-10.25) and (-2.5,-10.5) .. (-1.75,-10);
\draw [line width=1pt, short] (-6.25,-3.75) .. controls (-3.25,-5) and (-1.75,-5) .. (1, -3.75);
\draw [line width=1pt, dashed] (-6.25,-3.75) .. controls (-3.25,-2.5) and (-1.75,-2.75) .. (1,-3.75);
\draw [line width=1pt, short] (9.75,2.25) .. controls (5.25,-2.5) and (5.5,-5.75) .. (9.75,-10.5);
\draw [line width=1pt, short] (9.75,2.25) .. controls (14,-3) and (13.75,-6.25) .. (9.75,-10.5);
\draw [line width=1pt, short] (6.5,-4) .. controls (9.25,-5.25) and (10.5,-5.25) .. (12.75,-4);
\draw [line width=1pt, dashed] (6.5,-4) .. controls (9,-3) and (10.75,-3) .. (12.75,-4);

\draw [->, >=Stealth] (2,-4.25) -- (5.75,-4.25) node[pos=0.5, fill=white]{G-H};
\draw [->, >=Stealth] (2.5,8.5) -- (5.75,8.5) node[pos=0.5, fill=white]{G-H};
\draw [->, >=Stealth] (2.5,14) -- (0.25,10.25);
\draw [->, >=Stealth] (5,13.75) -- (7.25,10.25);
\draw [->, >=Stealth] (-13,4) -- (-5.75,11.5);
\draw [->, >=Stealth] (-12.25,2) -- (-5,2.25);
\draw [->, >=Stealth] (-12,0.5) -- (-5.25,-9);

\node [font=\Huge] at (-14,2.75) {Non-orientable Schwarzchild ALE};
\node [font=\Huge] at (3.5,14.25) {Non-orientable Cap};

\draw [->, >=Stealth] (3.25,-13.25) -- (0.25,-8.5);
\draw [->, >=Stealth] (5.5,-13.25) -- (7.25,-8.5);

\node [font=\Huge] at (4.25,-13.75) {Football};

\end{circuitikz}}
    \caption{Two examples of where irreducible multifolds can be realized as a limit in the non-orientable case. The first row illustrates Example \ref{e:non-orientable 1}, where the limit is a non-orientable cap, and one non-orientable Schwarzschild ALE bubbles off. The second row illustrates Example \ref{e: non-orientable 2}, where the limit is a football metric, and two non-orientable Schwarzschild ALEs bubble off. }
    \label{fig:non-orientable moduli}
\end{figure}

\begin{example}[Non-orientable sequence limiting to non-orientable cap]\label{e:non-orientable 1}
Let $(M_1,g_1)$ be the non-orientable Schwarzschild ALE manifold with group $\Gamma$ at infinity as in Remark \ref{r:gfcap}.
Let $(M_2,g_2)$ be the non-orientable cap with only one irreducible $\Gamma$-singularity. We can use gluing techniques to glue the truncated ALE manifold $(M_1,g_1)$ with $(M_2, g_2)$, along the singularity of the neighborhood $\Gamma$ with the gluing parameter $t$. Using the techniques in \cite{joyce2003constant}, for $t$ sufficiently small, we can conformally perturb the glued metric to a constant scalar, LCF, metric with positive scalar curvature. When $t\to 0$, the ALE part $(M_1, g_1)$ bubbles off, and the limit will be isometric to $(M_2,g_2)$. The first row of Figure \ref{fig:non-orientable moduli} illustrates this example.
    
\end{example}

In contrast to Corollary~\ref{c:footnon}, the football metric does actually arise as a Gromov-Hausdorff limit of non-orientable LCF, constant scalar manifolds:

\begin{example}[Non-orientable sequence limiting to orientable football]\label{e: non-orientable 2}
Let $(M_1,g_1)$ be the non-orientable Schwarzschild ALE manifold as in Example \ref{one end ALE}, with $\Gamma$ containing no $\mathbb{Z}_2$-rotation.

Let $(M_2, g_2)$ be the irreducible football orbifold $\mathbb{S}^4/\Gamma$, with the spherical quotient metric. Note $(M_2,g_2)$ is an orientable football metric with two irreducible $\Gamma$-singularities.

We can use gluing and perturbation techniques in \cite{joyce2003constant} to glue two pieces of the ALE manifold $(M_1,g_1)$ with the football metric $(M_2, g_2)$, along the singularity of the neighborhood $\Gamma$ with the same gluing parameter $t$. When $t\to 0$, the two parts of the ALE $(M_1, g_1)$ bubble off, and the limit will be isometric to $(M_2,g_2)$. The second row of Figure \ref{fig:non-orientable moduli} illustrates this example.
\end{example}

\begin{remark}
    Note that the non-orientable one-end ALE metric $(M_1,g_1)$ is an analogue of the $C_\Gamma^\sigma$ caps construction in \cite{ChenZhu}. Although the context is different, we all use the same gluing idea to resolve the orbifold singularity. (They need to cap off the orbifold singularity to run Ricci flow, while we cap off the orbifold singularity to produce a converging sequence.)
\end{remark}

\section{The positive mass theorem for LCF, ALE orbifolds}\label{PMT proof}

Finally, we discuss the positive mass theorem. First, we recall the ADM mass:

\begin{definition}\label{mass}
    Let $(M,g)$ be an ALE manifold. Let the coordinate on one end be $\{x^i\}$ for $r$ large. Then \textbf{the ADM mass} of this end is the following functional:
    \begin{align*}
            m(g):=\lim\limits_{r\to\infty}\frac{1}{2(n-1)\omega}\int_{S_r}(\partial_i g_{ij}-\partial_j g_{ii})\partial_j\llcorner dv,
    \end{align*}
    where $\omega$ is the volume of the unit ball in $\mathbb{R}^n$.
\end{definition}

The positive mass theorem states that in asymptotically flat manifolds with non-negative scalar curvature, with the condition that the scalar curvature is integrable, the mass is non-negative, with zero mass if and only if the asymptotically flat manifold is isometric to $(\mathbb{R}^n, g_{\mathbb{R}^n})$. This was proved by Schoen-Yau \cite{SYPMT1}, \cite{SYPMT2} using minimal surfaces, and by Witten \cite{Witten} using spinors. This was generalized to AE ends of orbifolds in \cite{ju2023conformally}, and to AE ends of spaces with isolated conical singularities in \cite{dai2025positive}.

It is natural to ask the same question on ALE manifolds. However, counterexamples were found by LeBrun \cite{Lebrun}. Thus, a naive extension of the positive mass theorem is not true. Note that LeBrun's counterexamples are anti-self-dual in real dimension 4.

Now, we prove the positive mass theorem in the LCF case:

\begin{proof}[Proof of Theorem \ref{PMT}]
        Our method is an indirect method from \cite{SYconformal}.

        By our assumption, the scalar curvature is integrable hence the mass is well-defined. From Lemma \ref{ALE orbifold blow-up}, we can conformally blow-up the orbifold singularities and form an ALE manifold. From Theorem \ref{positive yamabe}, we can choose the conformal Green's function so that the conformal metric is scalar-flat, and the coordinate such that the ADM mass is well-defined. 

        Let $(M,u^{\frac{4}{n-2}}g)$ to be the above conformal scalar-flat, ALE metric. We want to relate the mass between $(M,u^{\frac{4}{n-2}}g)$ and $(M,g)$. Note that we can adjust the scaling such that $u$ solves:
        \begin{align*}
            \left\{
            \begin{aligned}
                &-\Delta u + c(n) S u = 0\\
                &u\to 1\text{ as }|x|\to\infty
            \end{aligned}
            \right.
        \end{align*}
        Note that $S\geq 0$. The asymptotic of $u$ is:
        \begin{align*}
            u = 1 + Ar^{2-n} + O(r^{2-n-\epsilon})
        \end{align*}
        as $r = |x|\to\infty$. Using Green's formula, Theorem \ref{harmonic on ALE}, and the maximal principle, we have $A=-C\int_MuSdv$, for some $C\geq0$. In particular, $A\leq0$. Then, the mass $m(u^{\frac{4}{n-2}}g) = m(g) + (n-1)A\leq m(g)$. Thus, it is enough to prove the mass is positive for scalar-flat ALE metrics.
        
        By the previous discussion, consider the conformal compactification, $(M',g')$, of $(M,g)$. Consider the manifold cover, $N$, (Proposition \ref{manifold cover}), with the pull-back metric on $N$ is $\overline{g} = P^*g$.
    
    Consider $x_i\in M'$, which is an orbifold point with the local group $\Gamma_i$. The minimal Green's function at $x_i$ is $G_i$. Now, we pull back $G_i$ to $N$. Denote $\overline{G}_i=P^* G_i$. As in \cite{SYconformal}, choose $s\in P^{-1}(x_i)$, there exists a minimal Green's function on $N$, $G_s$, with a singular point at $s$. Thus, $\overline{G}_i\geq G_s$. By the maximal principle of the conformal Laplacian with positive Yamabe constant, we have either $\overline{G}_i>G_s$ or $\overline{G}_i=G_s$.
    
    If $\overline{G}_i>G_s$, then, in the inverted coordinate at $s$, near infinity, 
    \begin{align*}
            \overline{G}_i(x)=1+E_i|x|^{2-n}+O(|x|^{1-n})
    \end{align*}
    and
    \begin{align*}
            G_s(x)=1+E_s|x|^{2-n}+O(|x|^{1-n}),
    \end{align*}
    then we know $E_i>E_s$. Since $P$ is a local isometry, then $m(G_i^{\frac{4}{n-2}}g)>m(G_s^{\frac{4}{n-2}}\overline{g})$. By $N$ being an LCF manifold and by Proposition 4.3 in \cite{SYconformal}, we have $m(G_s^{\frac{4}{n-2}}\overline{g})\geq 0$, thus $m(G_i^{\frac{4}{n-2}}g)>0$.
    
    If $\overline{G}_i=G_s$, then $P^{-1}(x_i)=s$. Thus, all the deck transformations on $N$ fix $s$. Thus, $\Gamma_i\cong\pi_1(M)/\pi_1(N)$. By the previous case, if $m(G_s^{\frac{4}{n-2}}\overline{g})>0$, then it is done. Otherwise, $m(G_s^{\frac{4}{n-2}}\overline{g})=0$, then by Proposition 4.3 in \cite{SYconformal}, $\pi_1(N)=\{e\}$. Thus, $\pi_1(M)\cong \Gamma_i$, which means $N\cong \mathbb{S}^n$ since there is no limiting set. Thus, by Lemma \ref{local group}, $(M',g')$ is conformal to $(\mathbb{S}^n,g_{\mathbb{S}^n})/\Gamma_i$, the football orbifold with the spherical quotient metric. If $\Gamma_i$ is not trivial, $(M,g)$ is either conformal to $(\mathbb{R}^n\setminus\{0\},g_{\text{Sch}})/\Gamma_i$, the Schwarzschild ALE manifolds, in the manifold case; or conformal to $(\mathbb{R}^n, g_{flat})/\Gamma_i$, the flat cone metric, in the orbifold case. By direct computation: In the manifold case: $m(g)>0$. Otherwise, $M'\cong \mathbb{S}^n$. If $m(g)=0$, it is true if and only if $(M,g)\cong (\mathbb{R}^n,g_{\mathbb{R}^n})$. In the orbifold case: $m(g) = 0$ and $(M,g)\cong (\mathbb{R}^n, g_{\mathbb{R}^n})/\Gamma$.
\end{proof}

A direct corollary is a special case of the orbifold Yamabe problem. 

\begin{proof}[Proof of Corollary \ref{orbifold Yamabe corollary}]
        If $(M,g)$ is not conformally equivalent to the football orbifold with the spherical metric, then by Theorem \ref{PMT}, the mass for the conformal Green's function metric at each point $m(\hat{g}_x)>0$.
        
        Now, we choose a singularity $x\in M$. Since $(M,g)$ is LCF, the local distortion constant is purely determined by the mass (See Lemma 9.7 in \cite{lee1987yamabe}), we can use Schoen's test function in \cite{Sch84} and conclude that $Y_{orb}(M,[g])<Y(\mathbb{S}^n)|\Gamma_x|$, where $\Gamma_x$ is the orbifold group. Since this strict inequality holds for any orbifold singularity, we can apply Theorem \ref{t:orbifold Yamabe existence}.  
        
        For the case where $(M,g)$ is conformally equivalent to the football orbifold with the spherical metric, then it is obvious that it contains a metric which is of constant scalar curvature.
\end{proof}

\section{Appendix}
\subsection{The orbifold points in the conformal quotient}\label{orbifold points in the quotient}
We shall prove the following theorem:

\begin{theorem}\label{number ends}
    Let $G\leq C(n)$, a discrete subgroup acting on $\mathbb{S}^n$ properly discontinuously. Denote the limiting set of $G$ as $\Lambda$. Assume that there are no parabolic elements in $G$ and the Hausdorff dimension $\operatorname{dim}_{\mathcal{H}}(\Lambda) < \frac{n-2}{2}$. Suppose there exists a subgroup $\Gamma\leq G$ with isolated fixed points, then the fixed points of the local isotropy subgroup $\Gamma$ must appear in pairs unless the quotient is non-orientable.
\end{theorem}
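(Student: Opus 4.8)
The plan is to manufacture a fixed-point-free involution on the singular locus, push it down to the quotient, and show that this involution can fail to pair points only when orientation is reversed. First I would invoke Lemma \ref{local group} to conjugate the isotropy group $\Gamma = G_q$ of an isolated fixed point $q$ into ${\rm{O}}(n)$, so that in a linear chart $\Gamma$ fixes exactly the antipodal pair $\{q, q^*\}$ and acts freely on the intermediate $\mathbb{S}^{n-1}$ (isolatedness forces $\ker(\gamma - I) = 0$ for $\gamma \neq e$). Both $q$ and $q^*$ are then singular points, and $q^*$ is intrinsically characterized as the unique second fixed point of $G_q$. This defines an involution $q \mapsto q^*$ on the set $\tilde S \subseteq \mathbb{S}^n$ of isolated fixed points; it is fixed-point-free ($q \neq q^*$) and $G$-equivariant, since $G_{gq} = g G_q g^{-1}$ gives $(gq)^* = g\,q^*$. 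Hence it descends to an involution on the orbifold singular set $S = \tilde S / G$. The singular points come in pairs precisely when the descended involution is fixed-point-free, and it acquires a fixed point exactly when $q^* \in G\cdot q$, i.e. when there is $g \in G$ with $g(q) = q^*$.

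Thus the whole statement reduces to showing: if such a $g$ exists, the quotient is non-orientable. Assuming the quotient orientable (so all of $G$, hence $\Gamma$ and $g$, preserves orientation), I would derive a contradiction with isolatedness. A short order comparison shows that any such $g$ normalizes $\Gamma$ (from $g\Gamma g^{-1} = G_{q^*} \supseteq \Gamma$ with $|G_{q^*}| = |\Gamma|$) and genuinely swaps $q \leftrightarrow q^*$; in the linear chart it becomes the inversion-type map $v \mapsto \rho\,C v/|v|^2$ with $C \in {\rm{O}}(n)$, and a Jacobian computation gives that $g$ is orientation-preserving if and only if $\det C = -1$. For $n$ odd, $\Gamma \subseteq {\rm{SO}}(n)$ acting freely on the even-dimensional sphere $\mathbb{S}^{n-1}$ forces $\Gamma = \{e\}$ (the only free orthogonal involution is $-I \notin {\rm{SO}}(n)$, and a free action on $\mathbb{S}^{n-1}$ has order dividing $\chi(\mathbb{S}^{n-1}) = 2$), so a nontrivial orbifold point can occur only non-orientably and there is nothing to prove.

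The decisive case is $n$ even, where the main work lies. Every element of the ``odd'' coset $g\Gamma$ swaps $q$ and $q^*$, is orientation-preserving, and must have isolated fixed points; I would show each is forced to be a line reflection. Its linear part $C'$ satisfies $\det C' = -1$, so by parity $\dim\ker(C' - I) = 1$; and since $(C')^2$ fixes $q, q^*$ it lies in the free group $\Gamma$ while still possessing a fixed vector, whence $(C')^2 = I$ and $C' = 2P_{u'} - I$ for a line $u'$. Applying this to $g$ (axis $u$) and to $g\gamma$ (axis $u'$) for any $\gamma \neq e$, the element $\gamma' = (2P_u - I)(2P_{u'} - I) \in \Gamma$ acts as the identity on the $(n-2)$-dimensional subspace $\mathrm{span}(u,u')^{\perp}$; since $u \neq \pm u'$ (otherwise $\gamma' = I$), this exhibits a nonzero fixed subspace of the nontrivial $\gamma'$, contradicting that $\Gamma$ acts freely on $\mathbb{S}^{n-1}$. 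This rules out an orientation-preserving swap once singularities are isolated, which is what we wanted.

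The main obstacle is exactly this parity-plus-reflection step: the eigenvalue bookkeeping ($\det = -1$ together with a single $+1$-eigenvalue and the squaring argument) that forces line reflections, and the observation that two distinct such reflections produce a non-free element of $\Gamma$. I expect the hypotheses of no parabolic elements and $\dim_{\mathcal{H}}(\Lambda) < \tfrac{n-2}{2}$ to enter only in guaranteeing that the local groups are finite and elliptic and that $q, q^*$ lie in the domain of discontinuity, so that the linear orthogonal model and Lemma \ref{local group} genuinely apply.
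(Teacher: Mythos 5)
Your reduction to the existence of an orientation-preserving $g\in G$ with $g(q)=q^*$, and the two-reflection mechanism in the even-dimensional case, are sound ideas and genuinely different from the paper's route (the paper instead shows the swapping element $\gamma_0$ squares to a linear involution via Liouville's theorem, diagonalizes it as $A^{-1}D_kA$, and runs a case analysis over $k$). But there is a real gap: your entire construction presupposes that the antipodal partner $q^*$ lies in the domain of discontinuity $\Omega$, so that it projects to a singular point of the quotient and the involution $q\mapsto q^*$ descends. You only say you ``expect'' the hypotheses to guarantee this; in fact this is precisely the nontrivial \textbf{Case 2} of the paper's proof. If $q^*\in\Lambda$, the partner is not a point of the orbifold at all and the pairing statement fails, so it must be excluded: the paper supposes $-s\in\Lambda$ is a hyperbolic fixed point of some $g$, conjugates $g$ by elements of $\Gamma$ (which fix $-s$ but move the other endpoint $s_0$, since the common fixed set of $\Gamma$ is exactly $\{s,-s\}$) to produce two hyperbolic elements sharing precisely one fixed point, and contradicts discreteness via Lemma \ref{hyperbolic fixed point}. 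This is exactly where the no-parabolic hypothesis does its work --- and your proposal never uses that hypothesis anywhere, which is the telltale sign of the omission.

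A second, repairable, defect: ``$\det C'=-1$, so by parity $\dim\ker(C'-I)=1$'' is wrong as stated; for $n$ even, $\det C'=-1$ only gives that $\dim\ker(C'-I)$ is \emph{odd} (hence $\geq 1$, which is all your squaring step $(C')^2\in\Gamma\Rightarrow(C')^2=I$ needs). To upgrade to a line reflection you appeal to isolated fixed points of every element of the coset $g\Gamma$, but the fixed set of $g\gamma$ is a round $(a-1)$-sphere with $a=\dim\ker(C'-I)$, and it produces an edge singularity only if it meets $\Omega$; it could sit entirely inside $\Lambda$, where it is invisible in the quotient. This is exactly where the paper's hypothesis $\operatorname{dim}_{\mathcal{H}}(\Lambda)<\frac{n-2}{2}$ enters (its dimension count for the fixed $\mathbb{S}^k$ versus $\mathbb{S}^{n-k-2}$ in the $D_k$ case), and you never use that hypothesis either. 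Fortunately your endgame survives the repair: with $a$ and $a'$ odd and, by the edge-singularity/limit-set dichotomy plus the dimension bound, both $<\frac{n}{2}$, the element $\gamma=CC'\in\Gamma$ still acts as the identity on $(V_++V_+')^{\perp}$, of dimension $\geq n-a-a'>0$, contradicting freeness unless $V_+=V_+'$, in which case $\gamma=e$. With these two repairs --- the first of which is a genuine missing argument, not bookkeeping --- your approach would give a cleaner treatment of the orientable case than the paper's; your $n$ odd shortcut (a free orthogonal action of an orientation-preserving group on the even-dimensional $\mathbb{S}^{n-1}$ is trivial) is also correct and consistent with the paper's remark that odd-dimensional singularities are orientation-reversing $\mathbb{Z}_2$-quotients.
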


We need the following lemma from \cite{discrete_group}:
\begin{lemma}(\cite{discrete_group}, pg. 52)\label{hyperbolic fixed point}
  Let $g_1,g_2\in G\leq C(n)$, $g_1$ is hyperbolic. If $g_1$, $g_2$ have precisely one common fixed point, then $G$ is not discrete.
\end{lemma}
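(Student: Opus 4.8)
The plan is to prove that if $g_1$ is hyperbolic and shares precisely one boundary fixed point with $g_2$, then $G$ cannot be discrete, by working through the identification $C(n)\cong\operatorname{Isom}(\mathbb{H}^{n+1})\leq\mathrm{O}(n+1,1)$ and manufacturing a sequence of \emph{distinct} elements of $G$ that converges in $C(n)$. Since a discrete subgroup of the Lie group $C(n)$ admits no accumulation of distinct elements, producing such a sequence is exactly what is needed. The elements will be the conjugates $g_1^{-k}g_2g_1^{k}$.

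First I would normalize. Let $p$ be the common fixed point of $g_1$ and $g_2$, and let $q$ be the second boundary fixed point of the hyperbolic element $g_1$ (by Definition \ref{conjugate class} a hyperbolic element has exactly two fixed points on $\partial\mathbb{H}^{n+1}\cong\mathbb{S}^n$). Conjugating by a suitable element of $C(n)$, which does not affect discreteness, I place $p=\infty$ and $q=0$ in $\mathbb{R}^n=\mathbb{S}^n\setminus\{\infty\}$. Then $g_1$ fixes $0$ and $\infty$, so by the standard normal form (\cite{conformalbook}, Chapter 2) $g_1(x)=\lambda Ax$ with $\lambda>0$, $\lambda\neq1$, $A\in\mathrm{O}(n)$; replacing $g_1$ by $g_1^{-1}$ if necessary I arrange $\lambda>1$. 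Since $g_2$ fixes $\infty$, it is a similarity $g_2(x)=\mu Bx+b$ with $\mu>0$, $B\in\mathrm{O}(n)$, and $b\neq0$ because $g_2$ does not fix $0=q$.

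Then I compute, for $k\geq1$,
\begin{align*}
    h_k:=g_1^{-k}g_2g_1^{k},\qquad h_k(x)=\mu\,(A^{-k}BA^{k})\,x+\lambda^{-k}A^{-k}b.
\end{align*}
The linear parts $\mu A^{-k}BA^{k}$ all lie in the compact set $\mu\,\mathrm{O}(n)$, and the translation parts have norm $\lambda^{-k}|b|\to0$ since $\lambda>1$; hence a subsequence $h_{k_j}$ converges in $C(n)$ to the similarity $x\mapsto\mu Cx$, where $A^{-k_j}BA^{k_j}\to C\in\mathrm{O}(n)$. It remains to show infinitely many $h_k$ are distinct. If not, then $h_k=h_m$ for some $k>m$, which rearranges to $g_1^{-\ell}g_2g_1^{\ell}=g_2$ with $\ell=k-m>0$, i.e.\ $g_2$ commutes with $g_1^{\ell}$. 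Then $g_2$ preserves $\operatorname{Fix}(g_1^{\ell})$; but since $A^{\ell}\in\mathrm{O}(n)$ has no positive real eigenvalue $\neq1$, one checks $\operatorname{Fix}(g_1^{\ell})\cap\mathbb{S}^n=\{0,\infty\}=\{q,p\}$, so $g_2$ permutes $\{p,q\}$, and as $g_2(p)=p$ it must also fix $q$ — contradicting that $p$ is the \emph{only} common fixed point. Thus infinitely many $h_k$ are distinct, and the convergent subsequence exhibits distinct elements of $G$ accumulating in $C(n)$, so $G$ is not discrete.

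The main obstacle I anticipate is bookkeeping rather than conceptual: choosing the conjugation direction so the translation part decays (which dictates $\lambda>1$ and the form $g_1^{-k}(\cdot)g_1^{k}$), and arguing cleanly that a sequence of distinct group elements converging in $C(n)$ violates discreteness. The latter is essentially the definition of a discrete subgroup of $\operatorname{Isom}(\mathbb{H}^{n+1})$; one need only confirm that the convergence of $h_{k_j}$ (uniform on $\mathbb{S}^n$, equivalently convergence of the linear and translation data) is convergence in the subspace topology of $C(n)$. The normal forms are standard, and notably none of the parabolic-free or Hausdorff-dimension hypotheses appearing elsewhere in this section are required for this lemma.
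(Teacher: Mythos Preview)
Your argument is correct. The normalization and the underlying mechanism---using the dilation in $g_1$ to kill the translation part of a similarity built from $g_1$ and $g_2$---are the same as in the paper, but the specific sequence you manufacture is different. The paper forms the commutators $h_n=g_2 g_1^{n} g_2^{-1} g_1^{-n}$ and shows that along a subsequence (taking $n$ a multiple of the order of $U_1$, or $n$ with $U_1^{n}$ near the identity when $U_1$ has infinite order) one has $h_n(x)\to x+b$; the limit is a \emph{specific} nontrivial translation, and distinctness of the $h_n$ is implicit in the varying scalar $\alpha^{n}$ appearing in the translation part. You instead use the conjugates $h_k=g_1^{-k}g_2g_1^{k}$, invoke compactness of $\mathrm{O}(n)$ to extract a convergent subsequence, and then supply a separate, clean distinctness argument via $\operatorname{Fix}(g_1^{\ell})=\{0,\infty\}$. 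Your route avoids the case split on the order of $U_1$ and makes the distinctness step explicit, at the cost of passing to an unspecified subsequential limit rather than the concrete translation $x\mapsto x+b$; either version proves the lemma.
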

\begin{proof}
  Without loss of generality, let the common fixed point be $\infty$, and consider $\mathbb{S}^n=\mathbb{R}^n\cup\{\infty\}$. Then let $g_1(0)=0$, $g_2(0)\neq 0$, and $g_1(\infty)=g_2(\infty)=\infty$. Thus, we can write:
  \begin{align*}
      &g_1(x)=\alpha U_1(x),\\
      &g_2(x)=\beta U_2(x)+b,
  \end{align*}
  for $\alpha,\beta>0$, $\alpha\neq 1$, $b\in\mathbb{R}^n-\{0\}$, $U_1,U_2\in {\rm{O}}(n)$. We can assume that $\alpha<1$. 
  
  Consider 
  \begin{align*}
      h_n(x)&=g_2 g_1^n g_2^{-1}g_1^{-n}(x)\\
      &=U_2U_1^{n}U_2^{-1}U_1^{-n}(x)-\alpha^n U_2 U_1^n U_2^{-1}(b)+b.
  \end{align*}
  If $U_1^m=1$, then let $n=km$, we have
  \begin{align*}
      h_n(x)=x-\alpha^n(b)+b.
  \end{align*}
  Then, let $n\to \infty$, $h_n\to x+b$, which means that $G$ is not discrete.
  
  If $U_1$ has infinite order, then $\forall\epsilon>0$, $\exists n_\epsilon$ such that $|U_1^{n_\epsilon}(x)-x|<\epsilon$, $\forall x\in B(0,r)$. Thus, let $\epsilon\to 0$, we have $h_n\to x+b$, same as before.
\end{proof}

\begin{proof}[Proof of Theorem \ref{number ends}]

Denote $\Omega\subseteq\mathbb{S}^n$ to be the subset where $G$ acts on it properly and discontinuously, $\Lambda = \mathbb{S}^n - \Omega$.

Note that, by Lemma \ref{local group}, $\exists\gamma\in C(n)$ such that $\gamma^{-1}\Gamma\gamma\leq {\rm{O}}(n)$. And $\forall g\in {\rm{O}}(n)$ acting on $\mathbb{S}^n$, by looking at its eigenvectors, if there is one isolated fixed point, then it has two fixed points. We call the other fixed point $-s$, the antipodal point of $s$. Thus, $\gamma^{-1}\Gamma\gamma$ fixes $s$ and $-s$. 

To prove the theorem, we need to show $s$ and $-s$ will be in two different $G$-orbits. Suppose it is not true, we have the following two cases:

\textbf{Case 1}: If $-s\notin \Lambda$

Then $s$ and $-s$ must be in the same orbit. Thus, $\exists\gamma_0\in\rho(\pi_1(M))\leq C(n)$ such that $\gamma_0(s)=-s$.

Set $s=0$, $-s=\infty$, then by Corollary 1.8 in \cite{discrete_group}, we can write:
\begin{align*}
    \gamma_0(v)=r^2 U\frac{v}{|v|^2}+q',
\end{align*}
where $r$ is a constant, $U\in {\rm{O}}(n)$, $q'=\gamma_0^{-1}(\infty)$.

Since the quotient space is an orbifold with local group $\Gamma$, $\gamma^{-1}\Gamma\gamma$ maps the radius $t$, $n-1$ sphere, $S_t\subset \mathbb{R}^n$ to itself. Then, $\gamma^{-1}\Gamma\gamma$ and $\gamma_0^{-1}\gamma^{-1}\Gamma\gamma\gamma_0$ have the same orbit around $-s$. In particular, they fix $S_R\subset\mathbb{R}^n$, for $R$ large. Thus, $\gamma_0$ maps $S_t$ to some $S_R$, which implies that $q'=0$.

Now, $\gamma_0^2(v)=U^2 v$. Since $\gamma_0$ has no fixed point in $\Omega$, but $\gamma_0^2(s)=0=s$, then $\gamma_0^2=\operatorname{Id}$ on $\Omega$. By the assumption, the dimension $\operatorname{dim}(\mathbb{S}^n-\Omega)\leq \frac{n-2}{2}$. Thus, by Theorem \ref{thm L}, $\gamma_0^2=\operatorname{Id}$. Thus, $U^2=\operatorname{Id}$. Then, $U$ is diagonalizable, and all the eigenvalues of $U$ are $\pm 1$. Thus, $U=A^{-1} D_k A$, where $A,D_k\in {\rm{O}}(n)$, $D_k$ is the diagonal matrix with $k+1$ times $1$ and $n-k-1$ times $-1$ in the diagonal. $D_{-1}=-\operatorname{Id}$. It is easy to see that $\gamma_0$ fixes an $\mathbb{S}^k\subset\mathbb{S}^n$. More precisely, let $F$ be the fixed $\mathbb{S}^k$, then $F\subseteq S_r\subseteq\mathbb{R}^n$. We need to force the fixed $\mathbb{S}^k$ to be in the limit set of $G$. In particular, $\operatorname{dim}(\Lambda)\geq k$. Thus, we have the restriction $k<\frac{n-2}{2}$.

Note that we can define a conformal map $\phi:x\mapsto \gamma_1(x)$, for $x\in \mathbb{S}^n$, where $\gamma_1(v)=rA^{-1}(v)$, for $v\in\mathbb{R}^n$. Note also $\gamma_1(s)=s$, $\gamma_1(-s)=-s$. Thus, 
\begin{align*}
    \phi^*(\gamma_0)(v)&=\gamma_1^{-1}\circ\gamma_0\circ\gamma_1(v)\\
    &=r^{-1}A(r^2 A^{-1}D_k A\frac{r A^{-1}v}{r^2|v|^2})=D_k \frac{v}{|v|^2}.
\end{align*}
for $v\in\mathbb{R}^n$. Thus, $\phi^*(\gamma_0)\in {\rm{O}}(n+1)$ by stereographic projection, mapping the equator $\mathbb{S}^{n-1}$ to itself. And for the rest of $g\in \gamma^{-1}\Gamma\gamma\leq {\rm{O}}(n)$, $g(v)=B(v)$, for $B\in {\rm{O}}(n)$, thus,
\begin{align*}
    \phi^*(g)(v)=\gamma_1^{-1}g\gamma_1(v)=ABA^{-1}(v).
\end{align*}
Thus, $\phi^*(g)\in {\rm{O}}(n)$. Hence $\phi^*(\gamma^{-1}\Gamma\gamma)\leq {\rm{O}}(n)$, and it fixes $s$ and $-s$.

Now, we split into different cases based on the number $k$:

\begin{itemize}
    \item $D_{-1}$: In this case $\phi^*\gamma_0=-\operatorname{Id}$.
    
    If the dimension $n$ is odd, then $n-1$ is even, then $\Gamma\cong \mathbb{Z}_2$. Then let $\alpha\in \phi^*(\gamma^{-1}\Gamma\gamma)\leq {\rm{O}}(n)$, then $\alpha$ is a $\mathbb{Z}_2$-rotation around $s$ and $-s$ axis. Then $\phi^*(\gamma_0)\circ \alpha$ fixes the horizontal equator, which is a $\mathbb{S}^{n-1}\subset\mathbb{S}^{n}$. Since $\phi^*\gamma_0\circ\alpha$ corresponds to an element in $\rho(\pi_1(M))$, the resulting quotient will contain an edge singularity, which is a contradiction.

    If the dimension $n$ is even, then $n-1$ is odd. Then $\phi^*(\gamma_0)$ is an orientation-reversing deck transformation by looking at the determinant of $-\operatorname{Id}$. Then $\mathbb{S}^n/\rho(\pi_1(M))$ is non-orientable. In particular, if $\overline{M}\cong \mathbb{S}^n$, then if $\Gamma$ contains no $\mathbb{Z}_2$-rotation, then $(M,g)\cong (\mathbb{R}P^n\setminus\{p\},g_{\mathbb{R}P^n})/\Gamma$, which is diffeomorphic to a non-orientable Schwarzschild ALE manifold as in Example \ref{one end ALE}. But it is not orientable.
    
    \item $D_k$: Since the fixed $\mathbb{S}^k$ is in the limit set $\Lambda$, $\phi^*(\gamma^{-1}\Gamma\gamma)$ maps the fixed $\mathbb{S}^k$ to itself. Hence it corresponds to a group of block matrices, with one block of size $k+1$ and another of size $n-k$. And the $n-k$ matrix will have only one eigenvector as it fixes $s$ and $-s$, and the action on the orthogonal $\mathbb{S}^{n-k-2}$ is free.
    
    Again, when $n$ is odd, then $\gamma^{-1}\Gamma\gamma\cong \mathbb{Z}_2$, then $\phi^*\gamma_0\circ\alpha$ fixes a $\mathbb{S}^{n-k-2}$. It is not contained in $\Lambda$ by the dimension $n-k-2>\frac{n-2}{2}> \operatorname{dim}(\Lambda)$. Hence contradiction.
    
    Thus, when $n$ is even:
    
    If $k$ is even, then $n-k$ is even, $n-k-1$ is odd, then the $n-k$ block has only $-1$ as eigenvalue with multiplicity $n-k-1$. Similarly, choose a $\mathbb{Z}_2$ element $\alpha$, $\phi^*\gamma_0\circ\alpha$ fixes a $\mathbb{S}^{n-k-2}$. Hence contradiction.
    
    If $k$ is odd, then $n-k$ is odd, $n-k-1$ is even. The multiplicity of $-1$ in $\phi^*(\gamma_0)$ is $n-k$ and the multiplicity of $1$ is $k+1$. Then $\phi^*(\gamma_0)$ is orientation-reversing, the quotient is non-orientable.
\end{itemize}

\textbf{Case 2}: If $-s\in\Lambda$

We claim $-s$ is not a hyperbolic fixed point. Suppose $-s$ is a hyperbolic fixed point, then $\exists g \in \rho(\pi_1(M))$, hyperbolic, with $\lim\limits_{n\to\infty} g^n(x) = -s$. By assumption, $s\notin \Lambda$, thus, $\lim\limits_{n\to\infty}g^{-n}(x) = s_0\neq s$. Now, let $\gamma\in \Gamma$. Then $\gamma^{-1}g\gamma \in \rho(\pi_1(M))$ is also hyperbolic. The two fixed points are $-s$ and $\gamma^{-1}(s_0)$. Note $\gamma(s) = s$, then $\gamma^{-1}(s_0)\neq s$. Hence, we can apply Lemma \ref{hyperbolic fixed point}, $\rho(\pi_1(M))$ is not a discrete group, contradiction.
\end{proof}

\bibliographystyle{amsalpha}
\bibliography{Xiaokang_geometry_bib}
\end{document}